\title{\textbf{ \Large
Crystals of Lakshmibai-Seshadri paths
and extremal~weight modules
over quantum hyperbolic Kac-Moody algebras of rank $2$
}}
\author{Ryuta Hiasa\\
	{\small Graduate School of Pure and Applied Sciences, University of Tsukuba,}\\
	{\small 1-1-1 Tennodai, Tsukuba, Ibaraki 305-8571, Japan}\\
	{\small (e-mail: \texttt{hiasa@math.tsukuba.ac.jp})}
}
\date{}
\newcommand{\Fg}{\mathfrak{g}}
\newcommand{\C}{\mathbb{C}}
\newcommand{\R}{\mathbb{R}}
\newcommand{\Z}{\mathbb{Z}}
\newcommand{\derep}{\Delta_{\mathrm{re}}^+}
\newcommand{\B}{\mathbb{B}}
\newcommand{\CB}{\mathcal{B}}
\newcommand{\wt}{\mathrm{wt}}
\newcommand{\e}{\tilde{e}}
\newcommand{\f}{\tilde{f}}
\newcommand{\ep}{\varepsilon}
\newcommand{\ph}{\varphi}
\newcommand{\al}{\alpha}
\newcommand{\alc}{\alpha^{\vee}}
\newcommand{\0}{\mathbf{0}}
\newcommand{\tpq}[1]{q_{#1}/p_{#1}}
\newcommand{\dpq}[1]{\frac{q_{#1}}{p_{#1}}}
\newcommand{\bra}[1]{\left({#1}\right)}
\newcommand{\CT}{\mathcal{T}}
\newcommand{\resp}[1]{(\text{resp., }#1)}
\newcommand{\pair}[2]{\langle #1, #2 \rangle}
\newcommand{\mor}{\Phi^\lambda_\iota}
\newcommand{\bigset}[2]{\left\{{#1} \left|\ {#2}\right.\right\}}
\newcommand{\img}{\mathrm{Im}}
\newtheorem{theorem}{Theorem}[section]
\newtheorem{proposition}[theorem]{Proposition}
\newtheorem{lemma}[theorem]{Lemma}
\newtheorem{corollary}[theorem]{Corollary}
\theoremstyle{definition}
\newtheorem{definition}[theorem]{Definition}
\newtheorem{remark}[theorem]{Remark}
\numberwithin{equation}{section}
\begin{document}
\maketitle
\begin{abstract}
	Let $\mathfrak{g}$ be a hyperbolic Kac-Moody algebra of rank $2$,
	and let $\lambda$ be an arbitrary integral weight.
	We denote by $\mathbb{B}(\lambda)$ the crystal of all Lakshmibai-Seshadri paths of shape $\lambda$.
	Let $V(\lambda)$ be the extremal weight module of extremal weight $\lambda$
	generated by the (cyclic)
	extremal weight vector $v_\lambda$ of weight $\lambda$,
	and let $\mathcal{B}(\lambda)$ be the crystal basis of $V(\lambda)$ with  $u_\lambda \in \mathcal{B}(\lambda)$ the element corresponding to $v_\lambda$.
	We prove that
	the connected component $\mathcal{B}_0(\lambda)$ of $\mathcal{B}(\lambda)$ containing $u_\lambda$
	is isomorphic, as a crystal, to
	the connected component $\mathbb{B}_0(\lambda)$ of $\mathbb{B}(\lambda)$ containing the
	straight line $\pi_\lambda$.
	Furthermore,
	we prove that if $\lambda$ satisfies a special condition,
	then the crystal basis $\mathcal{B}(\lambda)$ is isomorphic,
	as a crystal, to the crystal $\mathbb{B}(\lambda)$.
	As an application of these results,
	we obtain an algorithm for computing the number of elements of weight $\mu$ in  $\mathcal{B}(\Lambda_1-\Lambda_2)$,
	where $\Lambda_1, \Lambda_2$ are the fundamental weights,
	in the case that $\mathfrak{g}$ is symmetric.
\end{abstract}

\setlength{\baselineskip}{16pt}
\section{Introduction.}
Let $\Fg$ be a symmetrizable Kac-Moody algebra over $\C$,
and  $U_q(\Fg)$ the quantized universal enveloping algebra over $\C(q)$
associated to $\Fg$.
We denote by $W$ the Weyl group of $\Fg$.
Let $P$ be an integral weight lattice of $\Fg$,
and $P^+$ \resp{$-P^+$} the set of dominant \resp{antidominant} integral weights in $P$.
Let $\mu \in P$ be an arbitrary integral weight.
The extremal weight module $V(\mu)$ of extremal weight $\mu$ is
the integrable $U_q(\Fg)$-module generated by a single element $v_\mu$
 with the defining relation that
 $v_\mu$ is an extremal weight vector of weight $\mu$ in the sense
of \cite{K}.
This module was introduced by Kashiwara \cite{K}
as a natural generalization of integrable highest (or lowest) weight modules;
in fact,
if $\mu \in P^+$ \resp{$\mu \in -P^+$}, then
the extremal weight module of extremal weight $\mu$ is
isomorphic, as a $U_q(\Fg)$-module, to  the integrable highest \resp{lowest} weight module
of highest \resp{lowest} weight $\mu$.
Also, he proved in \cite[Proposition 8.2.2]{K} that $V(\mu)$ has a crystal basis $\CB(\mu)$ for all $\mu \in P$;
let $u_\mu$ denote the element of $\CB(\mu)$
corresponding to  $v_\mu \in V(\mu)$.
We know from \cite{K} that $V(\mu)\cong V(w\mu)$ as $U_q(\Fg)$-modules,
and $\CB(\mu)\cong \CB(w\mu)$ as crystals for all $\mu \in P$ and $w \in W$.
Hence we are interested in
 the case that
\begin{align}\label{A}
	W\mu  \cap (P^+ \cup -P^+) = \emptyset.
\end{align}


If $\Fg$ is of finite type, then $W\mu\cap P^+ \neq \emptyset$ for any $\mu \in P$.
Assume that $\Fg$ is of affine type.
Then,
$W\mu  \cap (P^+ \cup -P^+) = \emptyset$
if and only if ($\mu \neq 0$, and)  $\mu$ is of level zero. Naito and Sagaki proved in
\cite{NS1} and \cite{NS2} that
if $\mu$ is a positive integer multiple of a level-zero fundamental weight,
then the crystal basis $\CB(\mu)$ of the extremal weight module $V(\mu)$  is
isomorphic, as a crystal, to the crystal  $\B(\mu)$ of Lakshmibai-Seshadri (LS for short) paths,
which was introduced by Littelmann in \cite{L2} and \cite{L};
see \S \ref{section.LSpath} for the details.
After that, Ishii, Naito, and Sagaki \cite{INS} introduced the notion of semi-infinite LS paths of shape $\mu$ for a level-zero
dominant integral weight $\mu$, and proved that
the crystal basis $\CB(\mu)$ of the extremal weight module $V(\mu)$
is isomorphic, as a crystal, to
the crystal $\B^{\frac{\infty}{2}}(\mu)$ of semi-infinite LS paths of
shape $\mu$.
Now, we assume that $\Fg$ is the hyperbolic Kac-Moody algebra associated to
the generalized Cartan matrix
\begin{align}
	A=
	\begin{pmatrix}
		2 & -a_1 \\-a_2 & 2\\
	\end{pmatrix}, \quad
	\text{ where }
	a_1, a_2 \in \Z_{\geq 2}
	\text{ with }
	a_1 a_2>4.
\end{align}
Yu \cite{Yu} proved that  $\Lambda_1-\Lambda_2 \in P$ satisfies  condition \eqref{A},  where $\Lambda_1, \Lambda_2$ are the fundamental weights,
and that (the crystal graph of) $\B(\Lambda_1-\Lambda_2)$ is connected.
Then, Sagaki and Yu \cite{SY} proved that $\B(\Lambda_1-\Lambda_2)$ is isomorphic, as a crystal, to the crystal basis $\CB(\Lambda_1-\Lambda_2)$ of the
extremal weight module $V(\Lambda_1-\Lambda_2)$ of extremal weight $\Lambda_1-\Lambda_2$.
In \cite{H},
the author obtained the following necessary and sufficient condition for an integral weight to satisfy condition \eqref{A}:
Let $\mathbb{O}:=\{ W\mu \mid \mu \in P \}$ be
	the set of $W$-orbits in $P$.
	A $W$-orbit $O\in \mathbb{O}$ satisfies
	condition \eqref{A}, 	that is,
	$O  \cap (P^+ \cup -P^+) = \emptyset$  if and only if
	$O$ contains
	an integral weight of the form either \eqref{enu.1int} or \eqref{enu.2int}:
	\begin{enumerate}[\upshape(i)]
		\item $k_1\Lambda_1-k_2\Lambda_2$ for some
		$k_1, k_2 \in \Z_{>0}$ such that $k_2\leq k_1<(a_1-1)k_2$;\label{enu.1int}
		\item $k_1\Lambda_1-k_2\Lambda_2$ for some
		$k_1, k_2 \in \Z_{>0}$ such that $k_1<k_2\leq (a_2-1)k_1 $.\label{enu.2int}
	\end{enumerate}
Also, he proved that for $\lambda=k_1\Lambda_1-k_2\Lambda_2 \in P$
of the form  either \eqref{enu.1int} or  \eqref{enu.2int} above,
the crystal graph $\B(\lambda)$ is connected if and only if $k_1=1$ or $k_2=1$.

In this paper, we study
the relationship between the crystal $\B(\lambda)$ of LS paths of shape $\lambda$ and
the crystal basis $\CB(\lambda)$ of extremal weight module $V(\lambda)$ of extremal weight $\lambda$
in the case that
$\lambda=k_1\Lambda_1-k_2\Lambda_2$
is of the form either
\eqref{enu.1int} or \eqref{enu.2int} above.
We prove the following theorems.
\begin{theorem}[$=$ Theorem \ref{thm.mainb0}]\label{thm.mainb0int}
	Let  $\B_0(\lambda)$ \resp{$\CB_0(\lambda)$} be the connected component of $\B(\lambda)$ \resp{$\CB(\lambda)$} containing $\pi_\lambda:=(\lambda; 0,1 )$ \resp{$u_\lambda$}.
	There exists an isomorphism $\B_0(\lambda) \rightarrow \CB_0(\lambda)$
	of crystals that sends $\pi_\lambda$ to $u_\lambda$.
\end{theorem}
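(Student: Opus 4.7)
The approach is to apply Kashiwara's universal property of the crystal basis $\CB(\lambda)$ of the extremal weight module (\cite[\S 8]{K}) to the straight-line path $\pi_\lambda$.

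First I would verify that $\pi_\lambda = (\lambda; 0, 1)$ is an extremal element of weight $\lambda$ in $\B(\lambda)$. Using Littelmann's formulas for the action of $\e_i, \f_i$ on LS paths, this reduces to checking that suitable products of root operators at $\pi_\lambda$ produce the straight-line paths $\pi_{w\lambda} = (w\lambda; 0, 1)$ for each $w \in W$, and that at every $\pi_{w\lambda}$ one of $\e_i, \f_i$ annihilates the element, according to the sign of $\pair{w\lambda}{\alc_i}$. The universal property of $\CB(\lambda)$ then supplies a unique morphism of crystals
\[
\Phi : \CB(\lambda) \longrightarrow \B(\lambda) \sqcup \{0\}, \qquad \Phi(u_\lambda) = \pi_\lambda.
\]
Its restriction $\Phi_0$ to $\CB_0(\lambda)$ takes values in $\B_0(\lambda)$, and since $\Phi_0(\CB_0(\lambda))$ is a sub-crystal of $\B_0(\lambda)$ containing $\pi_\lambda$, connectedness of the target forces surjectivity.

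The crux, and the place where I expect the obstacle to lie, is injectivity of $\Phi_0$. My plan is to construct an inverse morphism $\Psi_0 : \B_0(\lambda) \to \CB_0(\lambda)$ with $\Psi_0(\pi_\lambda) = u_\lambda$, by propagating along sequences of Kashiwara operators starting from $\pi_\lambda$. Well-definedness of $\Psi_0$ is equivalent to showing that every ``collision'' among root-operator sequences in $\B_0(\lambda)$---two sequences carrying $\pi_\lambda$ to the same path---is also a collision in $\CB_0(\lambda)$, i.e., that $\CB_0(\lambda)$ carries no relations beyond those visible in the path model. To establish this I would exploit the explicit rank-$2$ hyperbolic combinatorics developed in \cite{H}, namely the classification of the orbits $W(k_1\Lambda_1 - k_2\Lambda_2)$ and the resulting description of $\B(\lambda)$, together with the techniques of Sagaki-Yu \cite{SY} handling the base case $\lambda = \Lambda_1 - \Lambda_2$. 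The tractability of the rank-$2$ Weyl group should make a local case analysis feasible: it suffices to examine small two-colored neighbourhoods in $\B_0(\lambda)$ and match them with the corresponding neighbourhoods in $\CB_0(\lambda)$ coming from the defining relations of $V(\lambda)$. Once $\Psi_0$ is constructed, mutual inverseness with $\Phi_0$ follows from the uniqueness in the universal property applied to each of the extremal vectors $u_\lambda$ and $\pi_\lambda$.
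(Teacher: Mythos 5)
There is a genuine gap at the very first step: the ``universal property'' you invoke does not exist at the level of crystals. Kashiwara's universal property is a statement about the \emph{module} $V(\lambda)$ (any integrable $U_q(\Fg)$-module containing an extremal weight vector of weight $\lambda$ receives a map from $V(\lambda)$); it does not produce a morphism of crystals $\CB(\lambda)\rightarrow B\sqcup\{\0\}$ for an arbitrary normal crystal $B$ containing an extremal element of weight $\lambda$. The path crystal $\B(\lambda)$ is not known a priori to be the crystal basis of any $U_q(\Fg)$-module --- that is essentially what is being proved --- so there is no module to which the universal property could be applied, and no abstract principle (for $\lambda$ with $W\lambda\cap(P^+\cup -P^+)=\emptyset$) that characterizes $\CB_0(\lambda)$ among connected normal crystals with an extremal element of weight $\lambda$. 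Checking that $\pi_\lambda$ is extremal (indeed $S_w\pi_\lambda=\pi_{w\lambda}$) is correct but by itself yields no map in either direction. Your second step, the construction of the inverse $\Psi_0$, is not an argument but a restatement of the difficulty: ``$\CB_0(\lambda)$ carries no relations beyond those visible in the path model'' is precisely the content of the theorem, and it cannot be settled by inspecting small two-colored neighbourhoods, since connected crystals are not determined by local data and the defining relations of $V(\lambda)$ give no direct combinatorial handle on coincidences of root-operator strings in $\CB_0(\lambda)$.

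The paper proceeds quite differently, and this is the missing idea: both sides are realized inside one and the same concrete crystal. Using Theorem \ref{thm.cbext} and Corollary \ref{cor.polyhofex}, $\CB(\lambda)$ is identified with the subcrystal $Z(\lambda)$ of $\img(\Psi^+_{\iota^+})\otimes\CT_\lambda\otimes\img(\Psi^-_{\iota^-})$ cut out by the $\ast$-extremal condition inside the Nakashima--Zelevinsky polyhedral realization; then an explicit map $\mor$ on the subcrystal $\B_1(\lambda)\supset\B_0(\lambda)$ is defined by the formulas \eqref{eq.mordef} in terms of the LS-path data $(q_j,p_j)$, and Theorem \ref{thm.mor} verifies by direct computation (Lemmas \ref{lem.sig}, \ref{lem.minval}, Proposition \ref{prop.sigminval}) that $\mor$ is a strict embedding of crystals with $\mor(\pi_\lambda)=z_\lambda$. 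Injectivity is then immediate from the explicit formulas, and the isomorphism $\B_0(\lambda)\cong\CB_0(\lambda)$ follows by restricting to the connected component of $\pi_\lambda$. If you want to salvage your plan, you would have to replace the appeal to a universal property by such an explicit common realization (or by the Sagaki--Yu type analysis of \cite{SY} carried out for general $\lambda$ of the forms \eqref{enu.1} and \eqref{enu.2}); as written, neither the existence of $\Phi$ nor the well-definedness of $\Psi_0$ is established.
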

\begin{theorem}[$=$ Theorem \ref{thm.mainb}]\label{thm.mainbint}
	If $k_1=1$ or $k_2=1$, that is,
	$\lambda \in P$ is of the form either
	$k_1\Lambda_1-\Lambda_2$ with $1\leq k_1 < a_1-1$ or
	$\Lambda_1-k_2\Lambda_2$ with $1<k_2\leq a_2-1 $,
	then $\CB(\lambda)$ is connected.
\end{theorem}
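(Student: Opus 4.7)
The plan is to combine Theorem~\ref{thm.mainb0int} with the connectedness of $\B(\lambda)$ from~\cite{H} recalled above, and then rule out any further connected components of $\CB(\lambda)$ by a weight-multiplicity count. First, under the hypothesis $k_1=1$ or $k_2=1$, the result of~\cite{H} gives $\B(\lambda)=\B_0(\lambda)$, so Theorem~\ref{thm.mainb0int} furnishes a crystal isomorphism $\B(\lambda)\xrightarrow{\sim}\CB_0(\lambda)\subseteq\CB(\lambda)$ sending $\pi_\lambda$ to $u_\lambda$. In particular $|\CB(\lambda)_\mu|\geq|\B(\lambda)_\mu|$ for every $\mu\in P$, and the proof reduces to showing the reverse inequality for every $\mu$.

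Since $|\CB(\lambda)_\mu|=\dim V(\lambda)_\mu$, the task is the character bound $\dim V(\lambda)_\mu\leq|\B(\lambda)_\mu|$ for all $\mu\in P$. I would argue this directly on the crystal side by showing that every $b\in\CB(\lambda)$ already lies in $\CB_0(\lambda)$. Kashiwara's Weyl-group action $S_w$ on $\CB(\lambda)$ gives $u_{w\lambda}=S_w u_\lambda\in\CB(\lambda)$ for every $w\in W$; under the isomorphism from Theorem~\ref{thm.mainb0int} these correspond to the straight-line paths $S_w\pi_\lambda\in\B(\lambda)$, so every $u_{w\lambda}$ sits inside $\CB_0(\lambda)$. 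What remains is to prove that each $b\in\CB(\lambda)$ is reachable from some $u_{w\lambda}$ by a sequence of Kashiwara operators $\e_i,\f_i$; any such $b$ then lies in $\CB_0(\lambda)$, forcing $\CB(\lambda)=\CB_0(\lambda)$.

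The main obstacle is precisely this reachability step, since the analogous statement for level-zero extremal weight modules of affine type is known to fail in general. The argument must therefore exploit rank-$2$ hyperbolic features essentially: the infinite dihedral structure of $W$, the explicit form $\lambda=k_1\Lambda_1-k_2\Lambda_2$ with $k_1=1$ or $k_2=1$, and the combinatorial shape of $\B(\lambda)$ worked out in~\cite{H}. A natural route is induction on a height function (for example, the distance in the weight lattice from $W\lambda$, or the number of bends in the path picture), using the matching between Kashiwara operators on $\CB(\lambda)$ and root operators on $\B(\lambda)$ from Theorem~\ref{thm.mainb0int}. The connectedness of $\B(\lambda)$, transported through the isomorphism $\CB_0(\lambda)\cong\B(\lambda)$, then ensures that any trajectory of Kashiwara operators starting from some $u_{w\lambda}$ remains inside $\CB_0(\lambda)$, which yields $\CB(\lambda)=\CB_0(\lambda)$ and completes the proof.
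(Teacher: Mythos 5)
Your proposal correctly reduces the theorem to showing that every $b\in\CB(\lambda)$ lies in $\CB_0(\lambda)$, but it stops exactly where the actual mathematical content begins. The step you label ``the main obstacle'' --- that each $b\in\CB(\lambda)$ is reachable from some extremal element $u_{w\lambda}$ by Kashiwara operators --- \emph{is} the theorem, and your proposal offers only a gesture toward it (``induction on a height function,'' ``exploit rank-$2$ hyperbolic features'') without any mechanism that would carry out such an induction. The preliminary weight-multiplicity reduction does not help either, since no independent upper bound on $\dim V(\lambda)_\mu$ is available; and the closing remark that a trajectory of operators starting at $u_{w\lambda}$ stays in $\CB_0(\lambda)$ is true but vacuous --- the issue is whether such trajectories exhaust $\CB(\lambda)$, which is never addressed. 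Note also that the hypothesis $k_1=1$ or $k_2=1$ must enter the reachability argument itself (for $k_1,k_2\geq 2$ the conclusion is false, as $\B(\lambda)$ then has infinitely many components), whereas in your sketch it is used only through the equality $\B(\lambda)=\B_0(\lambda)$, which cannot suffice.

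The paper's proof proceeds quite differently and does not pass through $\B(\lambda)$ at all. It identifies $\CB(\lambda)$ with the subcrystal $\{b\in\CB(\infty)\otimes\CT_\lambda\otimes\CB(-\infty)\mid b^\ast\text{ is extremal}\}$ (Theorem \ref{thm.cbext}) and proves, following \cite{SY}, the key Lemma \ref{lem.cbconlem}: if $b=b_1\otimes t_\lambda\otimes u_{-\infty}$ with $b_1\neq u_\infty$ and $\e_ib\neq\0$, then $\e_ib=\e_ib_1\otimes t_\lambda\otimes u_{-\infty}$ (and dually for $\f_i$). The proof of this lemma is where the hypothesis is used: writing $(\e_ib)^\ast$ via \eqref{eq.starope} and exploiting extremality of $(\e_ib)^\ast$ together with the sign pattern $\pair{-\lambda}{\alc_1}=-k_1\leq 0$, $\pair{-\lambda}{\alc_2}=1\geq 0$ (for $\lambda=k_1\Lambda_1-\Lambda_2$) leads to a contradiction with the tensor product rule. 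From the lemma one gets Proposition \ref{prop.cbconpro}, $\CB(\lambda)\subset(\CB(\infty)\otimes t_\lambda\otimes u_{-\infty})\cup(u_\infty\otimes t_\lambda\otimes\CB(-\infty))$, and then a straightforward induction on $|\wt(b_1)|$ (resp.\ $|\wt(b_2)|$) shows every $b$ equals $\f_{i_r}\cdots\f_{i_1}u_\lambda$ or $\e_{i_r}\cdots\e_{i_1}u_\lambda$ --- reachability from $u_\lambda$ itself, which is stronger than what you aimed for. To repair your proposal you would need to supply an argument of this kind; as written, the central claim is assumed rather than proved.
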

Let $\lambda$ be as in Theorem \ref{thm.mainbint}.
By \cite[Theorem 4.1]{H} \resp{Theorem \ref{thm.mainbint}},
we have $\B(\lambda)=\B_0(\lambda)$ \resp{$\CB(\lambda)=\CB_0(\lambda)$}.
Therefore, by Theorem \ref{thm.mainb0int},
we obtain the following corollary.

\begin{corollary}[$=$ Corollary \ref{cor.main}]
If $k_1=1$ or $k_2=1$, then
there exists an isomorphism $\B(\lambda) \rightarrow \CB(\lambda)$ of crystals that sends $\pi_\lambda$ to $u_\lambda$.
\end{corollary}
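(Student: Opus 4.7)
The plan is to obtain the corollary as a direct composition of the two theorems stated above with the connectedness result \cite[Theorem 4.1]{H} recalled by the author. The hypothesis $k_1=1$ or $k_2=1$ matches precisely the condition under which \cite[Theorem 4.1]{H} asserts that the crystal graph $\B(\lambda)$ is connected; hence $\B(\lambda) = \B_0(\lambda)$. Symmetrically, Theorem \ref{thm.mainbint} asserts that $\CB(\lambda)$ is itself connected under the same hypothesis, so $\CB(\lambda) = \CB_0(\lambda)$.

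With these two identifications in hand, I would simply invoke Theorem \ref{thm.mainb0int}, which furnishes a crystal isomorphism $\B_0(\lambda) \to \CB_0(\lambda)$ sending $\pi_\lambda$ to $u_\lambda$. Substituting $\B(\lambda)$ for $\B_0(\lambda)$ and $\CB(\lambda)$ for $\CB_0(\lambda)$ in the source and target, respectively, yields the desired isomorphism $\B(\lambda) \to \CB(\lambda)$ of crystals that sends $\pi_\lambda$ to $u_\lambda$.

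Because the corollary is a purely formal consequence of results already in place, there is no obstacle specific to this step — the substantive content lives entirely in the proofs of Theorem \ref{thm.mainb0int} and Theorem \ref{thm.mainbint}. The only thing worth a line of verification is that the range of $(k_1,k_2)$ allowed by the hypothesis of the corollary, namely $k_1\Lambda_1-\Lambda_2$ with $1\leq k_1 < a_1-1$ or $\Lambda_1 - k_2\Lambda_2$ with $1<k_2\leq a_2-1$, lies within the weight range \eqref{enu.1int}--\eqref{enu.2int} on which both \cite[Theorem 4.1]{H} and Theorem \ref{thm.mainbint} apply; this is immediate from the statements themselves.
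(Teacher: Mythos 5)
Your argument is exactly the paper's: the author likewise deduces $\B(\lambda)=\B_0(\lambda)$ from \cite[Theorem 4.1]{H} and $\CB(\lambda)=\CB_0(\lambda)$ from Theorem \ref{thm.mainb}, then applies Theorem \ref{thm.mainb0} to obtain the isomorphism sending $\pi_\lambda$ to $u_\lambda$. The proposal is correct and complete.
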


As an application of these results,
we give an algorithm for computing the number of elements of weight $\mu \in P$
in the crystal $\CB(\Lambda_1-\Lambda_2)$,
which is equal to the dimension of the weight space of weight $\mu$ in $V(\Lambda_1-\Lambda_2)$,
in the case that $A$ is symmetric, that is $a_1=a_2$.

This paper is organized as follows.
In Section \ref{sec.review}, we fix our notation, and recall some basic facts about extremal weight modules and their crystal bases.
Also, we recall the definition of  LS paths and the polyhedral realizations of $\CB(\pm\infty)$.
In Section \ref{sec.maintheorems}, we state our main theorems.
In Section \ref{sec.rank2}, we recall some properties of LS paths and the polyhedral realizations in the rank $2$ case.
Then we prove Theorems \ref{thm.mainb0int} and \ref{thm.mainbint} in Subsections \ref{sec.mainprf1} and \ref{sec.mainprf2}, respectively.
In Section \ref{sec.comp}, we give an algorithm for computing the number of elements of weight $\mu \in P$
in the  crystal $\CB(\Lambda_1-\Lambda_2)$.

\section{Review.}\label{sec.review}
\subsection{Kac-Moody algebras.}
Let $A$ be a  generalized Cartan matrix
and  $\mathfrak{g}= \mathfrak{g}(A)$ the Kac-Moody algebra
associated to $A$ over $\C$.
We denote by $\mathfrak{h}$ the Cartan subalgebra
of $\mathfrak{g}$,
$\{ \alpha_{i} \}_{i \in I} \subset \mathfrak{h^\ast}$
the set of simple roots,
and $\{ \alpha_i^\vee \}_{i \in I} \subset \mathfrak{h}$
the set of simple coroots,
where $I$  is the  index set.
Let  $s_i$ be the simple reflection with respect to  $\alpha_i$
for $i \in I$,  and let
$W=\langle s_i \mid i \in I \rangle$
be the Weyl group of $\mathfrak{g}$.
Let $\Delta_{\mathrm{re}}^+$ denote the set of positive real roots.
For a positive real root $\beta \in \Delta_{\mathrm{re}}^+$,
we denote by $\beta^{\vee}$ the dual root of $\beta$,
and by $s_\beta \in W $ the reflection with respect to $\beta$.
Let
$\{ \Lambda_i \}_{i \in I} \subset \mathfrak{h^\ast}$ be the fundamental weights for $\mathfrak{g}$,
i.e., $\langle \Lambda_i , \alpha_j^\vee \rangle=\delta_{i, j}$ for $i, j \in I$,
where $\langle \cdot , \cdot \rangle : \mathfrak{h^\ast} \times \mathfrak{h
}\rightarrow \C$
is the canonical pairing of $\mathfrak{h^\ast}$ and $\mathfrak{h}$.
We take an integral weight lattice $P$ containing $\al_i$ and $\Lambda_i$ for all  $i \in I$.
We denote by $P^+$ \resp{$-P^+$} the set of dominant
\resp{antidominant} integral weights.

Let $U_q(\Fg)$ be the quantized universal enveloping algebra over $\C(q)$
associated to $\Fg$,
and let $U_q^+(\Fg)$ \resp{$U_q^-(\Fg)$} be the positive \resp{negative}
part of $U_q(\Fg)$, that is, $\C(q)$-subalgebra generated by the Chevalley generators $E_i$ \resp{$F_i$}
of $U_q(\Fg)$ corresponding to the positive \resp{negative} simple root $\al_i$
\resp{$-\al_i$} for $i\in I$.

\subsection{Crystal bases and crystals.}\label{sec.crystal}
For details on crystal bases and crystals,
we refer the reader to \cite{Kocb} and \cite{HK}.
Let $\CB(\infty)$ \resp{$\CB(-\infty)$} be
the crystal basis of  $U_q^-(\Fg)$
(resp., $U_q^+(\Fg)$),
and let $u_\infty \in \CB(\infty)$ (resp., $u_{-\infty} \in \CB(-\infty)$)
be the element corresponding to $1 \in U_q^-(\Fg)$ \resp{$1 \in U_q^+(\Fg)$}.
Denote by $\ast: \CB(\pm \infty) \rightarrow \CB(\pm \infty)$ the $\ast$-operation on $\CB(\pm \infty)$;
see \cite[Theorem 2.1.1]{Kdem} and \cite[\S 8.3]{Kocb}.
For $\mu \in P$, let $\CT_\mu=\{ t_\mu\}$ be the crystal consisting of a single element $t_\mu$
such that
		\begin{align}
			\wt(t_\mu)=\mu, \quad \e_i t_\mu =\f_i t_\mu= \0, \quad
			\ep_i(t_\mu)=\ph_i(t_\mu)=-\infty \ \text{ for } i\in I,
		\end{align}
where $\0$ is an extra element not contained in any crystal.


Let $B$ be a normal crystal in the sense of \cite[\S 1.5]{K}.
We know from \cite[\S 7]{K} (see also \cite[Theorem 11.1]{Kocb}) that  $B$ has an action of the Weyl group $W$ as follows.
For $i\in I $ and $b \in B$, we set
\begin{align}
	S_ib:=
	\begin{cases}
		\f_i^{\pair{\wt(b)}{\alc_i}}b & \text{ if } \pair{\wt(b)}{\alc_i}\geq 0,\\
		\e_i^{-\pair{\wt(b)}{\alc_i}}b & \text{ if } \pair{\wt(b)}{\alc_i}\leq 0.\\
	\end{cases}
\end{align}
Then, for $w \in W$, we set $S_w:=S_{i_1} \cdots S_{i_k}$ if $w=s_{i_1} \cdots s_{i_k}$.
Notice that $\wt(S_w b)$= $w\wt(b)$ for $w\in W$ and $b\in B$.

\begin{definition}
	An element of a normal crystal $B$ is said to be extremal if for each $w\in W$ and $i\in I$,
	\begin{align}
		\e_i(S_w b)=\0 & \text{ if } \pair{\wt(S_w b)}{\alc_i}\geq 0,\\
		 \f_i(S_w b)=\0 & \text{ if }  \pair{\wt(S_w b)}{\alc_i}\leq 0.
	\end{align}
\end{definition}

\subsection{Crystal bases of extremal weight modules.}

Let $\mu \in P$ be an arbitrary integral weight. The extremal weight module $V(\mu)$ of extremal
weight $\mu$ is, by definition, the integrable $U_q(\Fg)$-module generated by a single element
 $v_\mu$ with the defining relation that $v_\mu$ is an extremal weight vector of weight $\mu$ in the sense
of \cite[Definition 8.1.1]{K}. We know from \cite[Proposition 8.2.2]{K} that $V(\mu)$  has a crystal basis $\CB(\mu)$.
Let $u_\mu$ denote the element of $\CB(\mu)$ corresponding to $v_\mu$.

\begin{remark}\label{rem.extmd}
We see from \cite[Proposition 8.2.2 (iv) and (v)]{K} that
$V(\mu)\cong V(w\mu)$ as $U_q(\Fg)$-modules,
and $\CB(\mu)\cong \CB(w\mu)$ as crystals for all $\mu \in P$ and $w\in W$.
Also, we know from the comment at the end of \cite[\S8.2]{K} that
if $\mu \in P^+$ \resp{$\mu \in -P^+$},
then $V(\mu)$ is isomorphic, as a $U_q(\Fg)$-module, to the integrable highest
\resp{lowest}
weight
module of highest \resp{lowest}
weight $\mu$, and $\CB(\mu)$ is isomorphic, as a crystal, to its
crystal basis.
So, we focus on those $\mu \in P$ satisfying the condition that
\begin{align}
	W\mu \cap (P^+ \cup -P^+)=\emptyset\label{eq.Are}
\end{align}
\end{remark}

The crystal basis $\CB(\mu)$ of $V (\mu)$ can be realized (as a crystal) as follows. We set
\begin{align}
	\CB :=\bigsqcup_{\mu\in P} \CB(\infty) \otimes \CT_\mu \otimes \CB(-\infty);
\end{align}
in fact, $\CB$ is isomorphic, as a crystal, to the crystal basis $\CB(\tilde{U}_q(\Fg))$ of the modified quantized universal enveloping algebra $\tilde{U}_q(\Fg)$ associated to $\Fg$
(see \cite[Theorem 3.1.1]{K}). Denote by
$\ast : \CB \rightarrow \CB$ the $\ast$-operation on $\CB$
(see \cite[Theorem 4.3.2]{K});
we know from \cite[Corollary 4.3.3]{K} that for
$b_1 \in \CB(\infty)$, $b_2 \in \CB(-\infty)$, and $\mu \in P$,
\begin{align}\label{eq.starope}
	(b_1 \otimes t_\mu \otimes b_2)^\ast = b_1^\ast \otimes t_{-\mu -\wt(b_1)- \wt(b_2)} \otimes b_2^\ast.
\end{align}

\begin{remark}
	The weight of $(b_1 \otimes t_\mu \otimes b_2)^\ast$ is equal to $-\mu$ for all $b_1 \in \CB(\infty)$ and $b_2 \in \CB(-\infty)$ since $\wt(b_1^\ast) = \wt(b_1)$ and $\wt(b_2^\ast) = \wt(b_2)$.
\end{remark}

Because $\CB$ is a normal crystal by \cite[§2.1 and Theorem 3.1.1]{K}, $\CB$ has the action of
the Weyl group $W$ (see \S \ref{sec.crystal}). We know the following proposition from
\cite[Proposition 8.2.2 (and Theorem 3.1.1)]{K}.

\begin{theorem}\label{thm.cbext}
	For $\mu \in P$, the subset
	\begin{align}\label{eq.cb}
		\{b\in \CB(\infty) \otimes \CT_\mu \otimes \CB(-\infty) \mid b^\ast
		\text{\rm{ is extremal}} \}
	\end{align}
	is a subcrystal of $ \CB(\infty) \otimes \CT_\mu \otimes \CB(-\infty)$, and is isomorphic,
	as a crystal, to the crystal basis $\CB(\mu)$ of the extremal weight module $V(\mu)$ of extremal weight $\mu$.
In particular, $u_{\infty} \otimes t_\mu \otimes u_{-\infty} \in \CB(\infty) \otimes \CT_\mu \otimes \CB(-\infty)$ is contained in the set above, and corresponds to $u_\mu \in \CB(\mu)$ under the isomorphism.
\end{theorem}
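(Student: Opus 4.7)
The plan is to leverage Kashiwara's framework of the modified quantized enveloping algebra $\tilde{U}_q(\Fg)$. The proof has three main conceptual steps.

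First, I would recall from \cite[Theorem 3.1.1]{K} the crystal-basis identification $\CB(\tilde{U}_q(\Fg)) \cong \CB$. Under this isomorphism, the canonical generator $a_\mu$ of the $\mu$-th block of $\tilde{U}_q(\Fg) = \bigoplus_{\mu \in P} U_q(\Fg) a_\mu$ corresponds to $u_\infty \otimes t_\mu \otimes u_{-\infty}$. Since $V(\mu)$ is cyclic on $v_\mu \leftrightarrow a_\mu$, its crystal basis should embed into the $\mu$-block $\CB(\infty) \otimes \CT_\mu \otimes \CB(-\infty)$ as the subcrystal generated by this element under the Kashiwara operators $\e_i, \f_i$.

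Second, to cut out this subcrystal explicitly, I would use the $*$-operation on $\CB$ of \cite[Theorem 4.3.2]{K}, which corresponds to an anti-involution of $\tilde{U}_q(\Fg)$ interchanging the left and right module structures and sending $a_\mu$ to $a_{-\mu}$. The defining property of $v_\mu$ as an extremal weight vector in the sense of \cite[Definition 8.1.1]{K} says, roughly, that for every $w \in W$ the vector $v_\mu$ satisfies certain annihilation conditions determined by the sign of $\pair{w\mu}{\alc_i}$. Translating this across the $*$-duality, an element $b \in \CB(\infty) \otimes \CT_\mu \otimes \CB(-\infty)$ corresponds to a basis element of $V(\mu)$ if and only if the corresponding conditions are met by $b^*$ under the $W$-action $S_w$ built from $\e_i, \f_i$; this is precisely the extremality of $b^*$ in the normal crystal $\CB$. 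This yields the characterization \eqref{eq.cb}.

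Third, I would verify the two concrete claims about $b_0 := u_\infty \otimes t_\mu \otimes u_{-\infty}$. Formula \eqref{eq.starope} gives $b_0^* = u_\infty \otimes t_{-\mu} \otimes u_{-\infty}$. Because $u_\infty$ and $u_{-\infty}$ are distinguished highest/lowest elements of $\CB(\pm\infty)$, any iterate $S_w b_0^*$ decomposes compatibly with the tensor product, and a direct inspection of $\ep_i$ and $\ph_i$ on each tensor factor shows that the required vanishing of $\e_i (S_w b_0^*)$ or $\f_i (S_w b_0^*)$ holds according to the sign of $\pair{-w\mu}{\alc_i}$. Hence $b_0$ lies in the subset \eqref{eq.cb}. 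By the cyclicity of $V(\mu)$ on $v_\mu$, the subcrystal of \eqref{eq.cb} generated by $b_0$ must coincide with a crystal basis of $V(\mu)$, and the resulting bijection matches $b_0$ with $u_\mu$.

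The main obstacle is step two: making rigorous the passage from the representation-theoretic condition ``$v_\mu$ is an extremal weight vector in $V(\mu)$'' to the crystal-theoretic condition ``$b^*$ is extremal in the normal crystal $\CB$''. This equivalence is the substance of \cite[Proposition 8.2.2]{K}, and relies on the delicate interplay between the $*$-involution, the $W$-action $S_w$, and the tensor decomposition of $\CB$. Once this equivalence is established, the subcrystal property of \eqref{eq.cb} (closure under the $\e_i, \f_i$) follows because the $*$-operation commutes with these operators up to the standard shift formulas, and the identification with $\CB(\mu)$ and the matching $b_0 \leftrightarrow u_\mu$ are then formal consequences of universality.
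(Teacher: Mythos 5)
The statement you are proving is not given an independent proof in the paper: it is quoted directly from Kashiwara, namely from \cite[Proposition 8.2.2]{K} together with \cite[Theorem 3.1.1]{K}, which are exactly the references your first two steps lean on; to that extent your outline matches the intended justification, and your computation $b_0^\ast = u_\infty \otimes t_{-\mu} \otimes u_{-\infty}$ via \eqref{eq.starope} is correct.

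However, the one place where you supply an argument of your own contains a genuine error. In step one you describe the image of $\CB(\mu)$ as ``the subcrystal generated by $u_\infty\otimes t_\mu\otimes u_{-\infty}$ under $\e_i,\f_i$'', and in step three you conclude that ``by the cyclicity of $V(\mu)$ on $v_\mu$, the subcrystal of \eqref{eq.cb} generated by $b_0$ must coincide with a crystal basis of $V(\mu)$''. Cyclicity of the module does not pass to connectedness of the crystal: the Kashiwara operators do not generate the $U_q(\Fg)$-action, and in general $\CB(\mu)$ is strictly larger than the connected component of $u_\mu$. In the setting of this very paper that component is the proper subcrystal $\CB_0(\lambda)\subsetneq\CB(\lambda)$ unless $k_1=1$ or $k_2=1$ (this is the point of Theorem \ref{thm.mainb}, and by \cite[Theorem 4.1]{H} the path crystal $\B(\lambda)$ has infinitely many components otherwise). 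If the set \eqref{eq.cb} were merely the component of $b_0$, Theorems \ref{thm.mainb0} and \ref{thm.mainb} would be vacuous. The identification of the \emph{full} subset \eqref{eq.cb} (in general a disjoint union of many components) with $\CB(\mu)$ is precisely the nontrivial content of \cite[Proposition 8.2.2]{K}, obtained there from the crystal basis of $\tilde{U}_q(\Fg)$ and the $\ast$-involution, not from generation by $b_0$; so either cite that proposition for the isomorphism itself, as the paper does, or reproduce Kashiwara's argument, but the appeal to cyclicity/connectedness cannot close the proof.
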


\subsection{Lakshmibai-Seshadri paths.}\label{section.LSpath}
We recall Lakshmibai-Seshadri paths from
\cite[\S 2, \S 4]{L}.
In this subsection,
we fix an integral weight  $\mu \in P$.

\begin{definition}\label{order}
For $\nu, \nu' \in W\mu$,
we write $\nu \geq \nu' $
if there exist a sequence
$\nu=\nu_0, \nu_1, \ldots$, $\nu_u=\nu'$
of elements in
$W\mu$
and a sequence $\beta_1, \beta_2, \ldots , \beta_u$
of positive real roots such that
$\nu_k=s_{\beta_k}(\nu_{k-1})$ and
$\langle \nu_{k-1}, \beta^{\vee}_k \rangle < 0$
for each $k=1,2, \ldots, u$.
If $\nu \geq \nu'$, then we define $\mathrm{dist}(\nu, \nu')$
to be the maximal length $u$ of all possible such sequences
$\nu=\nu_0, \nu_1, \ldots, \nu_u=\nu'$.
\end{definition}

\begin{remark}\label{rem.1}
For $\nu, \nu' \in W\mu$ such that
$\nu > \nu'$ and $\mathrm{dist}(\nu, \nu')=1$,
there exists a unique positive real root $\beta \in \derep$ such that
$\nu'=s_{\beta}(\nu)$.
\end{remark}

The Hasse diagram of $W\mu$ is, by definition, the $\derep$-labeled,
directed graph with vertex set $W\mu$,
and edges of the following form:
$\nu \xleftarrow{\beta} \nu'$
for $\nu, \nu' \in W\mu$ and
$\beta \in \derep$
such that $\nu >\nu' $ with $\mathrm{dist}(\nu, \nu')=1$ and $\nu'=s_{\beta}(\nu)$.

\begin{definition}
Let $\nu, \nu' \in W\mu$ with $\nu > \nu' $,
and let $0<\sigma<1$ be a rational number.
A $\sigma$-chain for $(\nu, \nu')$ is a sequence
$\nu = \nu_0, \ldots, \nu_u=\nu'$
of elements of $W\mu $ such that
$\mathrm{dist}(\nu_{k-1}, \nu_k)=1$
and $ \sigma \langle \nu_{k-1},
\beta_k^\vee \rangle \in \mathbb{Z}_{<0}$
for all $k=1,2,\ldots, u$,
where $\beta_k$ is the unique positive real root satisfying
$\nu_k=s_{\beta_k}(\nu_{k-1})$.
\end{definition}

\begin{definition}
Let $\nu_1> \cdots> \nu_u$ be a finite sequence of elements in $W\mu$,
and let $0=\sigma_0<\cdots<\sigma_u=1$ be a finite sequence of rational numbers.
The pair
$\pi=(\nu_1,\ldots,\nu_u; \sigma_0, \ldots, \sigma_u)$
is called a Lakshmibai-Seshadri (LS for short) path of shape $\nu$
if there exists
a $\sigma_k$-chain for $(\nu_k,\nu_{k+1})$
for each $k=1, \ldots, u-1$.
We denote by $\B(\mu)$
the set of LS paths of shape $\mu$.
\end{definition}

Let  $[0,1]:= \{ t\in \R \mid 0 \leq t \leq 1 \}$.
We identify
$\pi=(\nu_1,\ldots,\nu_u;\sigma_0, \ldots , \sigma_u) \in \B(\mu)$ with the following piecewise-linear continuous map
$\pi:[0,1]\to \R \otimes_{\Z} P $:
\begin{align}
	\pi(t)=\sum^{j-1}_{k=1}(\sigma_k-\sigma_{k-1})\nu_k+
	(t-\sigma_{j-1})\nu_j \quad \text{ for }
	\sigma_{j-1}\le t\le\sigma_j, \  1\le j\le u.
\end{align}

We endow $\B(\mu)$ with
a crystal structure as follows.
First, we define $\mathrm{wt}(\pi):=\pi(1)$
for $\pi \in \B(\mu) $;
we know from \cite[Lemma 4.5 (a)]{L} that $\pi(1)\in P$.
Next, for $\pi \in \B(\mu) $ and $i \in I$,
\begin{gather}
	H^\pi_i(t):=\langle\pi(t),\alpha_i^{\vee}\rangle
	\quad \text{ for }  0 \leq t \leq1,\\
	m^\pi_i:=\min\{H^\pi_i(t) \mid 0\leq t \leq 1\}.
\end{gather}
From \cite[Lemma 4.5 (d)]{L}, we know that
\begin{align}\label{eq.int}
	\text{all local minimum values of }  H^\pi_i(t)
	\text{ are integers};
\end{align}
in particular, $m_i^\pi \in \mathbb{Z}_{\leq 0}$
and $H^\pi_{i}(1)-m_i^\pi \in \mathbb{Z}_{\geq 0}$.
We define $\e_i \pi$ as follows.
If $m^\pi_i=0$, then we set $\e_i\pi:=\mathbf{0}$.
If $m^\pi_i\le-1$, then we set
\begin{align}
t_1&:=\min\{t\in[0,1] \mid H^\pi_i(t)=m^\pi_i\}, \label{eq.et1} \\
t_0&:=\max\{t\in[0,t_1] \mid H^\pi_i(t)=m^\pi_i+1\}; \label{eq.et0}
\end{align}
we see by (\ref{eq.int}) that
\begin{align}\label{eq.estrict}
	H^\pi_i(t)  \text{ is strictly decreasing on } [t_0, t_1].
\end{align}
We define
\begin{align}
	(\e_i\pi)(t):=
	\begin{cases}
		\pi(t)
			& \text{if} \ 0\le t\le t_0, \\
		s_i(\pi(t)-\pi(t_0))+\pi(t_0)
			& \text{if} \ t_0\le t\le t_1,\\
		\pi(t)+\alpha_i
			& \text{if} \ t_1\le t \leq 1;
	\end{cases}
\end{align}
we know from
\cite[$\S 4$]{L} that $\e_i \pi \in \B(\mu)$.
Similarly, we define
$ \f_i \pi$ as follows.
If $H^\pi_i(1)-m_i^\pi=0$, then we set $\f_i\pi:=\mathbf{0}$.
If $H^\pi_i(1)-m_i^\pi \geq 1$, then  we set
\begin{align}
	t_0 &:=\max\{t\in[0,1]
		\mid 	H^\pi_i(t)=m^\pi_i\}, \label{eq.ft0} \\
	t_1&:=\min\{t\in[t_0,1] \mid 		H^\pi_i(t)=m^\pi_i+1\}; \label{eq.ft1}
\end{align}
we see {by} (\ref{eq.int}) that
	$H^\pi_i(t)$   is strictly increasing on  $[t_0, t_1]$.
We define
\begin{align}
	(\f_i\pi)(t):=
	\begin{cases}
		\pi(t)
			& \text{if }  0\leq t\leq t_0 , \\
		s_i(\pi(t)-\pi(t_0))+\pi(t_0)
			& \text{if }  t_0 \leq t\leq t_1, \\
		\pi(t)-\alpha_i
			& \text{if } t_1\leq t \leq1;
	\end{cases}
\end{align}
we know from \cite[$\S 4$]{L}
that $f_i \pi \in \B(\mu)$.
We set $\e_i\mathbf{0}= \f_i\mathbf{0} := \mathbf{0}$ for
$i \in I $.
Finally,
for $\pi \in \B(\mu)$ and $i \in I$,
we set
\begin{align}
	\ep_i (\pi)
	:=\max \{k\in\Z_{\geq 0}
	\mid \e^k_i \pi \neq \0\}, \quad
	\ph_i (\pi) :=\max \{k\in\Z_{\geq 0}
	\mid \f^k_i \pi \neq \0\}.
\end{align}
We know from {\cite[Lemma 2.1 (c)]{L}} that
\begin{align}\label{eq.LSepph}
	 \ep_i(\pi)=-m^\pi_i, \quad
	 \ph_i (\pi)=H^\pi_i(1)-m_i^\pi.
\end{align}
\begin{theorem}[{\cite[\S 2, \S 4]{L}}]
The set $\B(\mu)$,
together with the maps
$\wt: \B(\mu)  \to P$,
$\e_i, \f_i: \B(\mu) \to
	\B(\mu) \cup \{ \0 \}$, $i\in I$,
and $\ep_i, \ph_i : \B(\mu) \to
	 \Z_{\geq 0}$, $i \in I$, is a crystal.
\end{theorem}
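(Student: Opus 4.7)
The plan is to verify directly the five axioms of a (Kashiwara) crystal: (C1) $\ph_i(\pi) - \ep_i(\pi) = \pair{\wt(\pi)}{\alc_i}$; (C2) if $\e_i\pi \neq \0$, then $\wt(\e_i\pi) = \wt(\pi) + \al_i$, $\ep_i(\e_i\pi) = \ep_i(\pi) - 1$, $\ph_i(\e_i\pi) = \ph_i(\pi) + 1$; (C3) the symmetric statement for $\f_i$; (C4) $\f_i\pi = \pi' \iff \e_i\pi' = \pi$; (C5) the $-\infty$ condition, which is vacuous here since $\ep_i, \ph_i$ take values in $\Z_{\geq 0}$. Axiom (C1) is immediate from \eqref{eq.LSepph}, because $\ph_i(\pi) - \ep_i(\pi) = H^\pi_i(1) - m^\pi_i - (-m^\pi_i) = H^\pi_i(1) = \pair{\pi(1)}{\alc_i} = \pair{\wt(\pi)}{\alc_i}$.

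The main obstacle — and this is really the heart of the theorem — is proving that whenever $\e_i\pi$ and $\f_i\pi$ are defined as piecewise-linear maps by the formulas, they in fact again arise from an LS path of shape $\mu$. That is, the modified map can be written in the form $(\nu_1', \ldots, \nu_{u'}'; \sigma_0', \ldots, \sigma_{u'}')$ with $\nu_1' > \cdots > \nu_{u'}'$ in $W\mu$ and with a $\sigma_k'$-chain for each consecutive pair. The strategy is local: on $[0, t_0]$ and $[t_1, 1]$ the formula only composes with a translation by $\al_i$, which affects neither the sequence of $W\mu$-elements (translation permutes segments only by sending some $\nu_j$ to $s_i\nu_j$ on the middle interval) nor the chain property, so one reduces the issue to the interval $[t_0, t_1]$. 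On that interval $H^\pi_i$ is strictly monotone by \eqref{eq.estrict}, which forces each consecutive $\nu_k, \nu_{k+1}$ appearing there to satisfy $\pair{\nu_k}{\alc_i} < 0$; applying $s_i$ to the corresponding segments then produces another chain for $(s_i\nu_k, s_i\nu_{k+1})$, using the identity $s_i \circ s_\beta = s_{s_i\beta} \circ s_i$ and that $s_i$ permutes $\derep \setminus \{\al_i\}$. This is the technical lemma (see \cite[\S 4]{L}) that must be invoked.

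Given well-definedness, axioms (C2) and (C3) follow from a direct calculation of $H^{\e_i\pi}_i$ and $H^{\f_i\pi}_i$. For $\pi' = \e_i\pi$, one computes that $H^{\pi'}_i$ equals $H^\pi_i$ on $[0, t_0]$, equals $2(m^\pi_i + 1) - H^\pi_i$ on $[t_0, t_1]$, and equals $H^\pi_i + 2$ on $[t_1, 1]$; combining these with the definitions of $t_0, t_1$ in \eqref{eq.et1}–\eqref{eq.et0} and the strict-decrease property \eqref{eq.estrict} yields $m^{\pi'}_i = m^\pi_i + 1$ and $H^{\pi'}_i(1) = H^\pi_i(1) + 2$. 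In particular $\wt(\e_i\pi) = \wt(\pi) + \al_i$, $\ep_i(\e_i\pi) = \ep_i(\pi) - 1$, and $\ph_i(\e_i\pi) = \ph_i(\pi) + 1$ via \eqref{eq.LSepph}. Axiom (C3) is analogous.

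For (C4), the plan is to show that if $\pi' := \e_i\pi \neq \0$, then the pair $(t_0, t_1)$ used to define $\e_i\pi$ from $\pi$ agrees with the pair used to define $\f_i$ applied to $\pi'$ (as defined in \eqref{eq.ft0}–\eqref{eq.ft1}), after which the formulas are inverse reflections and inverse translations, giving $\f_i\pi' = \pi$. Concretely, since $m^{\pi'}_i = m^\pi_i + 1$, the maximum time at which $H^{\pi'}_i$ achieves $m^{\pi'}_i$ is exactly $t_0$ (as $H^{\pi'}_i$ equals $m^\pi_i + 1$ there and is strictly larger everywhere to the right until after $t_1$), and the first time after $t_0$ that $H^{\pi'}_i$ hits $m^{\pi'}_i + 1 = m^\pi_i + 2$ is $t_1$. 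The converse direction is identical, and this closes the proof.
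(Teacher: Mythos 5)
Your verification is correct, and it coincides with how the paper treats this statement: the paper offers no independent proof, quoting the theorem from Littelmann \cite[\S 2, \S 4]{L} and relying on exactly the two ingredients you isolate, namely closure of $\B(\mu)$ under $\e_i,\f_i$ (cited from \cite[\S 4]{L}) and the formulas \eqref{eq.LSepph} (cited from \cite[Lemma 2.1 (c)]{L}). Your remaining axiom checks (the computation of $H^{\e_i\pi}_i$ on the three subintervals, the identification of the pair $(t_0,t_1)$ for $\pi$ and for $\e_i\pi$, and the resulting relations for $\wt$, $\ep_i$, $\ph_i$) are the standard argument behind that citation, so no gap relative to the paper's treatment.
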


\subsection{Polyhedral realization of $\mathcal{B}(\pm\infty)$ and $\CB(\infty) \otimes \CT_\mu \otimes \CB(-\infty)$.}\label{sec.poly}
Let us recall the polyhedral realization of $\mathcal{B}(\infty)$
and $\mathcal{B}(-\infty)$ from \cite{NZ}.
We fix an infinite sequence
$\iota^+=(\ldots, i_k, \ldots, i_2, i_1)$
of elements of  $I$ such that $i_k \neq i_{k+1} $ for $k \in \Z_{\geq 1}$, and
$	\# \{ k \in \Z_{\geq 1} \mid i_k=i \}=\infty$ for each $i \in I$.
Similarly, we fix an infinite sequence
$\iota^-=(i_0, i_{-1}, \ldots, i_k, \ldots)$
of elements of  $I$ such that $i_k \neq i_{k-1} $ for $k \in \Z_{\leq 0}$, and
$	\# \{ k \in \Z_{\leq 0} \mid i_k=i \}=\infty $ for each $i \in I$.
We set
\begin{align}
	\Z^{+\infty}_{\geq 0}
	&:=\{ (\ldots, y_k, \ldots, y_2, y_1)
	\mid y_k \in \Z_{\geq 0} \text{ and } y_k=0 \text{ for } k \gg 0 \},\\
	\Z^{-\infty}_{\leq 0}
	&:=\{ (y_0, y_{-1}, \ldots, y_k, \ldots)
	\mid y_k \in \Z_{\leq 0} \text{ and } y_k=0 \text{ for } k \ll 0  \}.
\end{align}
We endow $\Z^{+\infty}_{\geq 0}$ and $\Z^{-\infty}_{\leq 0}$ with crystal structures as follows.
Let $y^+=(\ldots, y_k, \ldots, y_2, y_1) \in \Z^{+\infty}_{\geq 0}$
and $y^-=(y_0, y_{-1} \ldots, y_k \ldots) \in \Z^{-\infty}_{\leq 0}$.
For $k \geq 1$, we set
\begin{align}
	\sigma^+_k(y^+) = y_k + \sum_{j>k}\pair{\al_{i_j}}{\alc_{i_k}}y_j,
\end{align}
and for $k\leq 0$, we set
\begin{align}
		\sigma^-_k(y^-) =-y_k - \sum_{j<k}\pair{\al_{i_j}}{\alc_{i_k}}y_j;
\end{align}
since $y_j=0$ for $|j|\gg 0$, we see that  $\sigma^\pm_k(y)$  is well-defined, and $\sigma^\pm_k(y) =0$ for $|k| \gg 0$.
For $i \in I$,  we set
$\sigma^+_{(i)}({y^+}):= \max \{ \sigma^+_k(y^+)\mid k \geq 1,   i_k =i \}$
and $\sigma^-_{(i)}({y^-}):= \max \{ \sigma^-_k(y^-)\mid k \leq 0,   i_k =i \}$,
and define
\begin{align}
	&M^+_{(i)}= M^+_{(i)}(y^+):=
	\{ k \mid k \geq 1, i_k=i, \sigma^+_k(y^+) =\sigma^+_{(i)}(y^+) \},\\
	&M^-_{(i)}= M^-_{(i)}(y^-):=
	\{ k \mid k \leq 0, i_k=i, \sigma^-_k(y^-) =\sigma^-_{(i)}(y^-) \}.
\end{align}
Note that $\sigma^\pm_{(i)}(y^\pm) \geq 0$, and that $M^\pm_{(i)}= M^\pm_{(i)}(y^\pm)$ is a finite set
if and only if $\sigma^\pm_{(i)}(y^\pm) > 0$.
We define the maps
$\e_i, \f_i: \Z^{+\infty} \rightarrow \Z^{+\infty} \sqcup \{ \0 \}$
and  $\e_i, \f_i: \Z^{-\infty} \rightarrow \Z^{-\infty} \sqcup \{ \0 \}$
by
\begin{align}
	\e_i y^+ &:=
	\begin{cases}
		(\ldots, y'_k, \ldots, y'_2, y'_1)
		\text{ with } y'_k:=y_k - \delta_{k, \max{M^+_{(i)}}}
		& \text{ if } \sigma^+_{(i)}({y^+}) > 0,\\
		\0 & \text{ if } \sigma^+_{(i)}({y^+}) = 0,
	\end{cases}\\
	\f_i y^+ &:=(\ldots, y'_k, \ldots, y'_2, y'_1)
	\text{ with } y'_k:=y_k + \delta_{k, \min{M^+_{(i)}}},\\
	\e_i y^- &:= (y'_0, y'_{-1} \ldots, y'_k \ldots)
	\text{ with } y'_k:=y_k - \delta_{k, \max{M^-_{(i)}}},\\
		\f_i y^- &:=
			 \begin{cases}
					(y'_0, y'_{-1} \ldots, y'_k \ldots)
					\text{ with } y'_k:=y_k + \delta_{k, \min{M^-_{(i)}}} & \text{ if } \sigma^-_{(i)}({y^-}) > 0,\\
				 \0 & \text{ if } \sigma^-_{(i)}({y^-}) = 0,
			 \end{cases}
\end{align}
respectively.
Moreover, we define
\begin{gather}
	\wt(y^+):=-\sum_{j\geq 1} y_j \al_{i_j}, \quad
	\ep_i(y^+):=\sigma^+_{(i)}(y^+), \quad
		 \ph_i(y^+):=\ep_i(y^+)+\pair{\wt(y^+)}{\alc_i},\\
	\wt(y^-):=-\sum_{j\leq 0} y_j \al_{i_j}, \quad
	\ph_i(y^-):=\sigma^-_{(i)}(y^-), \quad
	\ep_i(y^-):=\ph_i(y^-)-\pair{\wt(y^-)}{\alc_i}.
\end{gather}
These maps make $\Z_{\geq 0}^{+\infty}$ \resp{$\Z_{\leq 0}^{-\infty}$} into a crystal for $\Fg$;
we denote this crystal by $\Z^{+\infty}_{\iota^+}$ \resp{$\Z^{-\infty}_{\iota^-}$}.
\begin{theorem}[{\cite[Theorem 2.5]{NZ}}]\label{thm.polyh}
There exists an embedding
$\Psi^+_{\iota^+}: \CB(\infty) \hookrightarrow \Z^{+\infty}_{\iota^+}$
of crystals which sends $u_\infty \in \CB(\infty )$
to $(\ldots, 0 \ldots, 0, 0) \in \Z_{\iota^+}^{+\infty}$.
Similarly, there exists an embedding
$\Psi^-_{\iota^-}: \CB(-\infty) \hookrightarrow \Z^{-\infty}_{\iota^-}$
of crystals which sends $u_{-\infty} \in \CB(-\infty) $
to $(0, 0, \ldots, 0, \ldots) \in \Z^{-\infty}_{\iota^-}$.
\end{theorem}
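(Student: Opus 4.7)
The plan is to construct $\Psi^+_{\iota^+}$ by iterating Kashiwara's embedding of $\CB(\infty)$ into tensor products of elementary crystals and then passing to a stabilized limit. For each $i \in I$ one has the elementary crystal $B_i = \bigset{(-n)_i}{n \in \Z_{\geq 0}}$ of Kashiwara, with $\wt((-n)_i) = -n\al_i$ and $\ep_i((-n)_i) = n$, $\ph_i((-n)_i) = -n$, and $\ep_j = \ph_j = -\infty$ for $j \neq i$, together with a strict embedding of crystals $\Psi_i : \CB(\infty) \hookrightarrow \CB(\infty) \otimes B_i$ sending $u_\infty$ to $u_\infty \otimes (0)_i$. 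I would take these as the atomic input.

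First, I would iterate along $\iota^+$: for each $N \geq 1$, composing $\Psi_{i_N} \circ \cdots \circ \Psi_{i_1}$ (after extending each $\Psi_{i_k}$ by the identity on the already-introduced tensor factors) gives a strict embedding $\CB(\infty) \hookrightarrow B_{i_N} \otimes \cdots \otimes B_{i_1}$ that sends $u_\infty$ to $(0)_{i_N} \otimes \cdots \otimes (0)_{i_1}$. Since every $b \in \CB(\infty)$ has the form $\f_{j_1} \cdots \f_{j_r} u_\infty$ for some finite sequence, its image has only finitely many nonzero coordinates, so the embeddings stabilize as $N \to \infty$ and combine into a well-defined injection $\Psi^+_{\iota^+} : \CB(\infty) \to \Z_{\geq 0}^{+\infty}$ which sends $u_\infty$ to the zero sequence.

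The key technical step is then to verify that the explicit crystal structure on $\Z^{+\infty}_{\iota^+}$ given through the $\sigma^+_k$ formulas coincides with the one induced by the infinite tensor product $\cdots \otimes B_{i_2} \otimes B_{i_1}$. Unfolding the two-factor tensor rule, one finds that $\ep_i$ of $\cdots \otimes (-y_2)_{i_2} \otimes (-y_1)_{i_1}$ equals $\max \bigset{y_k + \sum_{j>k}\pair{\al_{i_j}}{\alc_{i_k}}y_j}{k\geq 1,\ i_k=i} = \sigma^+_{(i)}(y^+)$, and that $\e_i$ modifies the factor at the largest maximizing index $k$, which is precisely the recipe encoded by $M^+_{(i)}$; an entirely parallel calculation handles $\f_i$, $\ph_i$, and $\wt$.

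The main obstacle I anticipate is the bookkeeping in this last step: applying the two-factor tensor product rule iteratively to an infinite (though finitely supported) tensor and matching the cumulative contributions with the $\sigma^+_k$ recipe requires care with sign conventions and with the right-to-left orientation of the tensor. The embedding $\Psi^-_{\iota^-}$ is obtained by a symmetric procedure starting from the Kashiwara embedding $\CB(-\infty) \hookrightarrow B_i \otimes \CB(-\infty)$ with elementary crystals $\bigset{(n)_i}{n \in \Z_{\geq 0}}$ of weight $n\al_i$ on the left; iteration along $\iota^-$ together with the same stabilization and matching argument yields the embedding into $\Z^{-\infty}_{\leq 0}$ sending $u_{-\infty}$ to the zero sequence.
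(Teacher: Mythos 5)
The paper does not prove this statement at all: it is imported verbatim as \cite[Theorem 2.5]{NZ}, so there is no internal proof to compare against. Your sketch is essentially the original Nakashima--Zelevinsky argument (iterating Kashiwara's strict embeddings $\CB(\infty)\hookrightarrow\CB(\infty)\otimes B_i$ along $\iota^+$, stabilizing, and checking that the tensor-product rule reproduces the $\sigma^+_k$ recipe), and it is sound; the only place you are terser than the actual argument is the stabilization step, where ``finitely many nonzero coordinates'' should be replaced by the standard observation that each application of $\Psi_{i}$ strips off $\tilde{e}_i^{*\,\max}$, so since every index occurs infinitely often in $\iota^+$ and weights are bounded above, the $\CB(\infty)$-component eventually becomes $u_\infty$.
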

The next corollary follows immediately from Theorem \ref{thm.polyh}.
\begin{corollary}\label{cor.polyhofex}
	For each $\mu \in P$,
	there exists an embedding
	$\CB(\infty) \otimes \CT_\mu \otimes  \CB(-\infty) \hookrightarrow
	\Z^{+\infty}_{\iota^+} \otimes \CT_\mu \otimes\Z^{-\infty}_{\iota^-}$
	 of crystals
	which sends
	$u_\infty  \otimes t_\mu \otimes u_{-\infty} \in \CB(\infty) \otimes \CT_\mu \otimes  \CB(-\infty)$  to $(\ldots, 0, \ldots, 0,0) \otimes t_\mu \otimes  (0, 0, \ldots, 0, \ldots) \in \Z^{+\infty}_{\iota^+} \otimes \CT_\mu \otimes\Z^{-\infty}_{\iota^-}$.
\end{corollary}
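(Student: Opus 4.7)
The plan is to obtain the embedding as the tensor product of the two embeddings provided by Theorem \ref{thm.polyh} with the identity map on the middle factor $\CT_\mu$. Specifically, I would define
\begin{align}
\Psi^+_{\iota^+} \otimes \mathrm{id}_{\CT_\mu} \otimes \Psi^-_{\iota^-} : \CB(\infty) \otimes \CT_\mu \otimes \CB(-\infty) \longrightarrow \Z^{+\infty}_{\iota^+} \otimes \CT_\mu \otimes \Z^{-\infty}_{\iota^-}
\end{align}
by $b_1 \otimes t_\mu \otimes b_2 \mapsto \Psi^+_{\iota^+}(b_1) \otimes t_\mu \otimes \Psi^-_{\iota^-}(b_2)$. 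The image of $u_\infty \otimes t_\mu \otimes u_{-\infty}$ is then the required element $(\ldots,0,\ldots,0,0) \otimes t_\mu \otimes (0,0,\ldots,0,\ldots)$ by the respective normalizations stated in Theorem \ref{thm.polyh}.

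Next I would verify that this is indeed a morphism of crystals, which reduces to the general fact that the tensor product of strict crystal morphisms is a strict crystal morphism. Since $\Psi^\pm_{\iota^\pm}$ preserve $\wt$, $\ep_i$, $\ph_i$, $\e_i$, and $\f_i$ by Theorem \ref{thm.polyh}, and the identity map trivially preserves everything on $\CT_\mu$, the tensor product rules for $\e_i$ and $\f_i$ (which depend only on the values $\ep_i$ and $\ph_i$ of the factors) immediately imply that the displayed map commutes with $\e_i$ and $\f_i$. Injectivity is similarly inherited from the injectivity of $\Psi^+_{\iota^+}$ and $\Psi^-_{\iota^-}$, since a tensor product of injections of sets is an injection.

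I do not anticipate any real obstacle here; the statement is purely formal, and the only point requiring mild care is noting that $\CT_\mu$ is a one-element crystal so that the tensor product decomposes componentwise in a trivial way. Thus the proof amounts essentially to invoking Theorem \ref{thm.polyh} on the two outer factors and observing that the combined map has the required properties.
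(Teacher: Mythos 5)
Your proposal is correct and matches the paper's own (implicit) argument: the paper derives the corollary immediately from Theorem \ref{thm.polyh} by forming exactly the map $\Psi^+_{\iota^+} \otimes \mathrm{id} \otimes \Psi^-_{\iota^-}$, as it later makes explicit in the commutative diagram defining the $\ast$-operation. Your verification via the tensor product rule and the normalization of the distinguished elements is precisely the routine check the paper leaves to the reader.
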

We define $\ast$-operations on $\img(\Psi_{\iota^\pm}^{\pm})$ and
$\img(\Psi_{\iota^+}^{+}) \otimes \CT_\mu \otimes \img(\Psi_{\iota^-}^{-})$
by the following commutative diagrams, respectively:
\begin{align}
	\begin{CD}
    \CB(\pm\infty) @>{\ast}>> \CB(\pm\infty) \\
  	@V{\Psi_{\iota^\pm}^{\pm}}VV    @VV{\Psi_{\iota^\pm}^{\pm}}V \\
    \img(\Psi_{\iota^\pm}^{\pm})  @>{\ast}>>  \img(\Psi_{\iota^\pm}^{\pm}),
  \end{CD}
\end{align}
\begin{align}
	\begin{CD}
    \CB(\infty) \otimes \CT_\mu \otimes \CB(-\infty)  @>{\ast}>> \CB(\infty) \otimes \CT_\mu \otimes \CB(-\infty) \\
  	@V{\Psi_{\iota^+}^{+} \otimes \, \mathrm{id} \,  \otimes \Psi_{\iota^-}^{-} }VV    @VV{\Psi_{\iota^+}^{+} \otimes \, \mathrm{id} \,  \otimes \Psi_{\iota^-}^{-}}V \\
    \img(\Psi_{\iota^+}^{+}) \otimes \CT_\mu \otimes\img(\Psi_{\iota^-}^{-})   @>{\ast}>>  \img(\Psi_{\iota^+}^{+}) \otimes \CT_\mu \otimes\img(\Psi_{\iota^-}^{-}).
  \end{CD}
\end{align}
Then we deduce from \cite[Remark in \S 2.4]{NZ} that
if $z_1=(\ldots, c_2, c_1) \in \img(\Psi_{\iota^+}^{+})$,
then $z_1^{\ast}=\f_{i_1}^{c_1} \f_{i_2}^{c_2} \cdots (\ldots, 0, 0)$.
Similarly, we see that
if  $z_2=(c_0, c_{-1}, \ldots ) \in \img(\Psi_{\iota^-}^{-})$,
then
$z_2^{\ast}=\e_{i_0}^{-c_0} \e_{i_{-1}}^{-c_{-1}} \cdots (0, 0, \ldots) $.
Moreover, we see  by \eqref{eq.starope} that
if $z_1 \in \img(\Psi_{\iota^+}^{+})$ and $z_2 \in \img(\Psi_{\iota^-}^{-})$,
then
\begin{align}
	(z_1 \otimes t_\mu \otimes z_2)^\ast = z_1^\ast \otimes t_{-\mu -\wt(z_1)- \wt(z_2)} \otimes z_2^\ast.
\end{align}


By the tensor product rule of crystals, we can describe the crystal structure of
$ \Z^{+\infty}_{\iota^+} \otimes \CT_\mu \otimes\Z^{-\infty}_{\iota^-}$ as follows.
Let $y=y^+ \otimes t_\mu \otimes y^-$
with  $y^+=(\ldots, y_2, y_1) \in \Z^{+\infty}_{\iota^+} $
and $y^-= ( y_0, y_{-1}, \ldots ) \in \Z^{-\infty}_{\iota^-}$.
We set
\begin{align}
	\sigma_k(y):=
	\begin{cases}
		\sigma_k^+(y^+) & \text{ if } k\geq 1,\\
		\sigma_k^-(y^-) -\pair{\wt(y)}{\alc_{i_k}} & \text{ if } k\leq 0.\\
	\end{cases}
\end{align}
For $i \in I$, we set
$\sigma_{(i)}({y}):= \max\{\sigma_k(y)\mid k\in\Z,  i_k =i\}$ and
\begin{align}\label{eq.mi}
	M_{(i)}= M_{(i)}(y):=
	\{ k \mid i_k=i, \sigma_k(y) =\sigma_{(i)}(y) \}.
\end{align}
Then we see that
\begin{align}
	\wt(y) =\mu-\sum_{j\in \Z} y_j \al_{i_j}, \quad \ep_i(y) = \sigma_{(i)}(y),
	\quad \ph_i(y) =\ep_i(y)+ \pair{\wt(y)}{\alc_i},
\end{align}
\begin{align}
	&\e_i y =
	\begin{cases}
		(\ldots, y'_2, y'_1)\otimes  t_\mu \otimes ( y'_0, y'_{-1}, \ldots )
		\text{ with } y'_k:=y_k - \delta_{k, \max{M_{(i)}}} & \text{ if } \ep_i(y) > 0,\\
		\0 & \text{ if } \ep_i(y) =0,
	\end{cases}\\
	&\f_i y =
	\begin{cases}
		(\ldots, y'_2, y'_1)\otimes  t_\mu \otimes ( y'_0, y'_{-1}, \ldots )
		\text{ with } y'_k:=y_k + \delta_{k, \min{M_{(i)}}} & \text{ if } \ph_i(y) > 0,\\
		\0 & \text{ if } \ph_i(y) =0.
	\end{cases}
\end{align}

\section{Main results.}\label{sec.maintheorems}
In the following, we assume that
the generalized Cartan matrix $A$ is  of the form
\begin{align}\label{eq.gcm}
	A=
	\begin{pmatrix}
		2 & -a_1 \\-a_2 & 2\\
	\end{pmatrix}, \
	\text{ where }
	a_1, a_2 \in \Z_{\geq 2}
	\text{ with }
	a_1 a_2>4;
\end{align}
note that $\al_1=2\Lambda_1-a_2\Lambda_2$ and  $\al_2=-a_1\Lambda_1+2\Lambda_2$.
We set $P=\Z \Lambda_1 \oplus \Z \Lambda _2$.
Let $\lambda \in P$ be an integral weight of the form either
\eqref{enu.1} or \eqref{enu.2}:
\begin{enumerate}[\upshape(i)]
	\item $\lambda=k_1\Lambda_1-k_2\Lambda_2$ for some
	$k_1, k_2 \in \Z_{>0}$ such that $k_2\leq k_1<(a_1-1)k_2$;\label{enu.1}
	\item $\lambda=k_1\Lambda_1-k_2\Lambda_2$ for some
	$k_1, k_2 \in \Z_{>0}$ such that $k_1<k_2\leq (a_2-1)k_1 $.\label{enu.2}
\end{enumerate}
\begin{remark}
	Let $\mathbb{O}:=\{ W\mu \mid \mu \in P \}$ be
	the set of $W$-orbits in $P$.
	We know from \cite[Theorem 3.1]{H} that
	$O\in \mathbb{O}$ satisfies condition \eqref{eq.Are},
	that is,
	$O  \cap (P^+ \cup -P^+) = \emptyset$  if and only if
	$O$ contains
	an integral weight 
	of the form either \eqref{enu.1} or \eqref{enu.2} above.
\end{remark}
%
%
Let  $\B_0(\lambda)$ \resp{$\CB_0(\lambda)$} be the connected component of $\B(\lambda)$ \resp{$\CB(\lambda)$} containing $\pi_\lambda:=(\lambda; 0,1 )$ \resp{$u_\lambda$}.
\begin{theorem}[will be proved in \S \ref{sec.mainprf1}]\label{thm.mainb0}
	Let $\lambda$ be an integral weight of the form either \eqref{enu.1} or \eqref{enu.2} above.
	There exists an isomorphism $\B_0(\lambda) \rightarrow \CB_0(\lambda)$ of crystals that sends $\pi_\lambda$ to $u_\lambda$.
\end{theorem}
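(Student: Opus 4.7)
The plan is to build a strict crystal embedding $\mor : \B_0(\lambda) \hookrightarrow \Z^{+\infty}_{\iota^+} \otimes \CT_\lambda \otimes \Z^{-\infty}_{\iota^-}$ for a suitable pair $\iota=(\iota^+,\iota^-)$ of index sequences, sending $\pi_\lambda$ to $(\ldots,0,0) \otimes t_\lambda \otimes (0,0,\ldots)$, whose image is contained in $\img(\Psi^+_{\iota^+}) \otimes \CT_\lambda \otimes \img(\Psi^-_{\iota^-})$ and consists of elements $z$ such that $z^\ast$ is extremal. Via Corollary \ref{cor.polyhofex} together with Theorem \ref{thm.cbext}, the image then identifies with a subcrystal of $\CB(\lambda)$ containing $u_\lambda$, hence sits inside $\CB_0(\lambda)$. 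Strictness of $\mor$ combined with the connectedness of $\B_0(\lambda)$ and $\CB_0(\lambda)$ will upgrade this to the desired isomorphism $\B_0(\lambda) \xrightarrow{\sim} \CB_0(\lambda)$.

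First I would choose alternating sequences, say $\iota^+=(\ldots,2,1,2,1)$ and $\iota^-=(2,1,2,1,\ldots)$, or their swap, depending on whether $\lambda$ is of type \eqref{enu.1} or \eqref{enu.2}. For an LS path $\pi=(\nu_1,\ldots,\nu_u;\sigma_0,\ldots,\sigma_u) \in \B_0(\lambda)$, the infinite dihedral structure of $W$ in rank $2$ expresses each $\nu_k$ uniquely as a product of alternating simple reflections applied to $\lambda$; from this data and the rational numbers $\sigma_0<\sigma_1<\cdots<\sigma_u$, one extracts nonnegative coordinate sequences $(\ldots,y_2,y_1)$ and $(y_0,y_{-1},\ldots)$ defining $\mor(\pi)$. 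The key combinatorial identity to establish is that $\mor$ intertwines the Littelmann root operators $\e_i,\f_i$ on $\B_0(\lambda)$ (cf.\ \eqref{eq.et0}--\eqref{eq.ft1}) with the polyhedral operators on $\Z^{+\infty}_{\iota^+} \otimes \CT_\lambda \otimes \Z^{-\infty}_{\iota^-}$ (cf.\ \eqref{eq.mi}); agreement of $\wt,\ep_i,\ph_i$ will then follow from \eqref{eq.LSepph}.

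The main obstacle will be verifying that the image of $\mor$ lies in $\img(\Psi^+_{\iota^+}) \otimes \CT_\lambda \otimes \img(\Psi^-_{\iota^-})$ and that $\mor(\pi)^\ast$ is extremal for every $\pi \in \B_0(\lambda)$. Using the explicit description $z_1^\ast=\f_{i_1}^{c_1}\f_{i_2}^{c_2}\cdots (\ldots,0,0)$ on $\img(\Psi^+_{\iota^+})$ recalled in \S\ref{sec.poly}, together with the tensor product formula \eqref{eq.starope}, one can write $\mor(\pi)^\ast$ explicitly in terms of the LS data of $\pi$. Extremality then amounts to checking, for every $w \in W$ and every $i \in I$, that $\e_i S_w \mor(\pi)^\ast = \0$ whenever $\pair{\wt(S_w \mor(\pi)^\ast)}{\alc_i} \geq 0$, and the dual statement with $\f_i$. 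Because $W$ is infinite dihedral and the orbit $W\lambda$ together with its Hasse diagram (Definition \ref{order}) is explicitly controlled in \cite{H}, this reduces to a tractable case analysis indexed by the position of $w\lambda$ in the orbit. Once extremality and injectivity of $\mor$ are established, the connectedness of $\B_0(\lambda)$ and the strictness of $\mor$ will force the image to coincide with $\CB_0(\lambda)$, completing the proof.
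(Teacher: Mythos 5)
Your plan is, at its core, the same as the paper's: the same alternating choice of $\iota=(\iota^+,\iota^-)$, the same extraction of coordinates from the LS data $(x_m\lambda,\ldots,x_n\lambda; q_j/p_j)$, the same key step of showing the resulting map $\mor$ is a strict crystal embedding intertwining the Littelmann operators with the polyhedral ones, and the same endgame via Corollary \ref{cor.polyhofex}, Theorem \ref{thm.cbext}, and connectedness. (One small point you gloss over: to extract well-defined coordinates you implicitly use that every $\pi\in\B_0(\lambda)$ has consecutive directions adjacent in the orbit ordering; the paper handles this by working on the explicitly described subcrystal $\B_1(\lambda)\supset\B_0(\lambda)$, which is stable under the operators by \cite[Theorem 4.17]{H}, cf.\ \eqref{eq.b01}.)

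The genuine divergence is your declared ``main obstacle'': verifying directly that $\mor(\pi)^\ast$ is extremal for every $\pi\in\B_0(\lambda)$. This step is unnecessary, and the paper's proof deliberately avoids it. By Theorem \ref{thm.cbext}, the set $Z(\lambda)=\{b \mid b^\ast \text{ extremal}\}$ is a \emph{subcrystal} of $\CB(\infty)\otimes\CT_\lambda\otimes\CB(-\infty)$ isomorphic to $\CB(\lambda)$; since $\mor$ is a strict embedding and every element of $\B_0(\lambda)$ is reached from $\pi_\lambda$ by the operators $\e_i,\f_i$, the image $\mor(\B_0(\lambda))$ lies in the connected component of $z_\lambda=\mor(\pi_\lambda)$, which is automatically contained in $Z(\lambda)$ and indeed equals (the realization of) $\CB_0(\lambda)$. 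So extremality of $\mor(\pi)^\ast$ comes for free once Theorem \ref{thm.mor} is proved. Had you actually needed the direct check, your sketch of it would be a real gap: you give no mechanism for computing $S_w\mor(\pi)^\ast$ for the infinitely many $w\in W$, and the $\ast$-twist disconnects that computation from the LS-path data, so the claim that it reduces to ``a tractable case analysis indexed by the position of $w\lambda$'' is unsupported --- the string lengths $\ep_i,\ph_i$ of $S_w\mor(\pi)^\ast$ depend on all coordinates, not just on the weight $-w\lambda$. Drop that step, and concentrate the effort where the paper does: the explicit computation of $\sigma_k(\mor(\pi))$ and its identification with the values $-H_i^\pi$ at the break points (Lemmas \ref{lem.sig}, \ref{lem.minval}, Proposition \ref{prop.sigminval}), followed by the case-by-case matching of $\e_i$ on both sides; that is the actual content of Theorem \ref{thm.mor}, and your proposal currently treats it as a single unexamined ``key combinatorial identity.''
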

\begin{theorem}[will be proved in \S \ref{sec.mainprf2}]\label{thm.mainb}
	Assume that $k_1=1$ or $k_2=1$, that is,
	$\lambda \in P$ is of the form either
	$k_1\Lambda_1-\Lambda_2$ with $1\leq k_1 < a_1-1$ or
	$\Lambda_1-k_2\Lambda_2$ with $1<k_2\leq a_2-1 $.
	For $b \in \CB(\lambda)$, there exist $i_1, \ldots , i_r \in I$
	such that $b=\f_{i_r} \cdots \f_{i_1}u_\lambda$ or $b=\e_{i_r} \cdots \e_{i_1}u_\lambda$.
	In particular, the crystal graph of $\CB(\lambda)$ is connected.
\end{theorem}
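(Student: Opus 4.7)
The plan is to work inside the polyhedral realization of Corollary~\ref{cor.polyhofex}. Write each $b \in \CB(\lambda)$ as $b = y^+ \otimes t_\lambda \otimes y^-$, with $y^+ = (\ldots, y_2, y_1) \in \img(\Psi^+_{\iota^+}) \subset \Z^{+\infty}_{\geq 0}$ and $y^- = (y_0, y_{-1}, \ldots) \in \img(\Psi^-_{\iota^-}) \subset \Z^{-\infty}_{\leq 0}$, for alternating sequences $\iota^\pm$ fixed as in Section~\ref{sec.rank2}. The heart of the argument is the following \emph{dichotomy}: if $k_1 = 1$ or $k_2 = 1$, then every $b \in \CB(\lambda)$ satisfies $y^+ = (\ldots, 0, 0)$ or $y^- = (0, 0, \ldots)$.

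Once the dichotomy is established, the theorem follows easily. Suppose $y^- = 0$; then $\wt(b) \leq \lambda$, and we claim $b = \f_{i_r} \cdots \f_{i_1} u_\lambda$. Take $j := \max\{k \geq 1 \mid y^+_k > 0\}$; then $\sigma_j(b) = y^+_j > 0$, so $\ep_{i_j}(b) > 0$ and $\e_{i_j} b \neq \0$. Moreover, since $\ep_{i_j}(t_\lambda) = -\infty$, the tensor product rule applied to $(y^+ \otimes t_\lambda) \otimes y^-$ shows that $\e_{i_j}$ acts on the left two factors whenever $\ph_{i_j}(y^+ \otimes t_\lambda) \geq \ep_{i_j}(y^-) = 0$, which one verifies to hold at this $j$, so $\e_{i_j} b$ again has $y^- = 0$ and strictly smaller $\sum_k y^+_k$. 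Induction on $\sum_k y^+_k$ yields $i_1, \ldots, i_r$ with $\e_{i_r} \cdots \e_{i_1} b = u_\lambda$, equivalently $b = \f_{i_r} \cdots \f_{i_1} u_\lambda$. The case $y^+ = 0$ is symmetric, producing $b = \e_{i_r} \cdots \e_{i_1} u_\lambda$.

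To prove the dichotomy, translate the $*$-extremal characterization of $\CB(\lambda)$ from Theorem~\ref{thm.cbext} into explicit inequalities on $y^\pm$. Using the description in Section~\ref{sec.poly},
\[
	b^* = (y^+)^* \otimes t_\nu \otimes (y^-)^*, \qquad \nu = -\lambda - \wt(y^+) - \wt(y^-),
\]
with $(y^+)^* = \f_{i_1}^{y_1} \f_{i_2}^{y_2} \cdots u_\infty$ and $(y^-)^* = \e_{i_0}^{-y_0} \e_{i_{-1}}^{-y_{-1}} \cdots u_{-\infty}$. Extremality of $b^*$ forces $\e_i(S_w b^*) = \0$ (resp.\ $\f_i(S_w b^*) = \0$) whenever $\pair{-w\lambda}{\alc_i} \geq 0$ (resp.\ $\leq 0$), for all $w \in W$ and $i \in I$. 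In the rank $2$ hyperbolic setting $W$ is infinite dihedral, so $W\lambda$ organizes into the two alternating chains $\{(s_1 s_2)^n \lambda\}_{n \geq 0}$ and $\{(s_2 s_1)^n \lambda\}_{n \geq 0}$; propagating the extremality condition step by step along each chain yields an explicit family of inequalities on the $y_k$'s. The hypothesis $k_1 = 1$ (resp.\ $k_2 = 1$) places $\lambda$ in a ``minimal'' position against $\alc_1$ (resp.\ $\alc_2$), and the first few inequalities along the relevant chain rule out the coexistence of nonzero $y^+$ and nonzero $y^-$.

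The main obstacle is the dichotomy itself. Rank $2$ periodicity keeps each chain of inequalities tractable—after the initial step the subsequent constraints follow by monotonicity along the orbit—but the incompatibility of nonzero $y^+$ and $y^-$ must be verified carefully in each of the two symmetric cases $k_1 = 1$ and $k_2 = 1$. Without this hypothesis, \cite[Theorem 4.1]{H} shows $\B(\lambda)$ splits into multiple connected components, so via the isomorphism of Theorem~\ref{thm.mainb0} the dichotomy must fail in general, confirming that the special form of $\lambda$ assumed here is essential.
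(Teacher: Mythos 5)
Your skeleton coincides with the paper's strategy (which follows Sagaki--Yu): show that every $b\in\CB(\lambda)$ is of ``pure'' form, i.e.\ $b=b_1\otimes t_\lambda\otimes u_{-\infty}$ or $b=u_\infty\otimes t_\lambda\otimes b_2$ (your dichotomy $y^-=0$ or $y^+=0$), and then climb down by $\e_i$'s (resp.\ up by $\f_i$'s) to $u_\lambda$. But both substantive steps are asserted rather than proved, and they are exactly where all the work lies. First, the dichotomy: your plan of ``propagating the extremality condition step by step along each chain'' to get ``an explicit family of inequalities on the $y_k$'s'' is not carried out, and it is not routine, because $S_w b^\ast$ is obtained by iterated maximal strings of $\e_i$ and $\f_i$, which scramble the coordinates $y_k$; there is no direct dictionary from extremality of $b^\ast$ to inequalities on the original $y_k$'s. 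The paper (Proposition \ref{prop.cbconpro}, via Lemma \ref{lem.cbconlem}) instead argues by contradiction on the level of $\CB(\infty)\otimes\CT_\lambda\otimes\CB(-\infty)$, using \eqref{eq.starope}, the tensor product rule, and the $S_w$-action on the normal crystal $\CB$, and this is precisely where the hypothesis $k_2=1$ (resp.\ $k_1=1$) enters, through $\pair{-\lambda}{\alc_1}=-k_1\le 0$ and $\pair{-\lambda}{\alc_2}=k_2=1\ge 0$; you never exhibit where your ``first few inequalities'' actually exclude the coexistence of nonzero $y^+$ and $y^-$, and you concede the verification is missing.

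Second, your induction step has the same kind of gap. Choosing $j=\max\{k\ge 1\mid y^+_k>0\}$ does give $\ep_{i_j}(b)>0$, but the claim that $\e_{i_j}$ then acts on the left factors, i.e.\ that $\ph_{i_j}(y^+\otimes t_\lambda)\ge \ep_{i_j}(y^-)=0$, is dismissed with ``which one verifies to hold at this $j$''. In the ambient crystal $\Z^{+\infty}_{\iota^+}\otimes\CT_\lambda\otimes\Z^{-\infty}_{\iota^-}$ this can fail: $\e_i$ may well lower a coordinate of $y^-=(0,0,\ldots)$, producing an element with nonzero $y^-$. Ruling this out for elements of $\CB(\lambda)$ is the content of the paper's Lemma \ref{lem.cbconlem}\,(1), whose proof again needs the extremality of $(\e_i b)^\ast$, the Weyl group action, and the special form of $\lambda$; it is not a consequence of the dichotomy alone nor of the tensor product rule alone. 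So as written the proposal reduces the theorem to two unproved claims, each of which is essentially equivalent to the key lemmas of the paper's proof; filling them in would require reproducing that extremality argument (or a genuinely new computation in the polyhedral model, which you have not supplied).
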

Let $\lambda$ be as in Theorem \ref{thm.mainb}.
By \cite[Theorem 4.1]{H} \resp{Theorem \ref{thm.mainb}},
we have $\B(\lambda)=\B_0(\lambda)$ \resp{$\CB(\lambda)=\CB_0(\lambda)$}.
Therefore, by Theorem \ref{thm.mainb0},
we obtain the following corollary.

\begin{corollary}\label{cor.main}
If $k_1=1$ or $k_2=1$, then
there exists an isomorphism $\B(\lambda) \rightarrow \CB(\lambda)$ of crystals that sends $\pi_\lambda$ to $u_\lambda$.
\end{corollary}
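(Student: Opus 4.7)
The plan is to assemble this corollary directly from the three results already in place, since no new technical input is required. The hypothesis $k_1 = 1$ or $k_2 = 1$ appears in all three ingredients, and under it each of the crystals in question collapses to a single connected component.

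First, I would invoke \cite[Theorem 4.1]{H}, which under the standing hypothesis asserts that the crystal graph of $\B(\lambda)$ is connected. Since $\pi_\lambda = (\lambda; 0, 1) \in \B(\lambda)$ and $\B_0(\lambda)$ is by definition the connected component of $\B(\lambda)$ containing $\pi_\lambda$, connectedness forces $\B(\lambda) = \B_0(\lambda)$. Next, I would use Theorem \ref{thm.mainb}: every $b \in \CB(\lambda)$ is of the form $\f_{i_r} \cdots \f_{i_1} u_\lambda$ or $\e_{i_r} \cdots \e_{i_1} u_\lambda$, so every element of $\CB(\lambda)$ lies in the connected component of $u_\lambda$, giving $\CB(\lambda) = \CB_0(\lambda)$.

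Finally, I would apply Theorem \ref{thm.mainb0}, which furnishes an isomorphism of crystals $\B_0(\lambda) \xrightarrow{\sim} \CB_0(\lambda)$ sending $\pi_\lambda \mapsto u_\lambda$. Composing with the two equalities from the previous step yields the desired crystal isomorphism $\B(\lambda) \xrightarrow{\sim} \CB(\lambda)$ that sends $\pi_\lambda$ to $u_\lambda$.

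There is no real obstacle here: the corollary is a purely formal consequence of the two main theorems of this paper together with the connectedness result \cite[Theorem 4.1]{H}. All the technical content, in particular the delicate construction of the isomorphism on the connected components via the polyhedral realization and the $\ast$-operation, and the proof that $\CB(\lambda)$ itself is connected under the hypothesis $k_1 = 1$ or $k_2 = 1$, lives inside Theorems \ref{thm.mainb0} and \ref{thm.mainb}, whose proofs are the substantive work done elsewhere in the paper.
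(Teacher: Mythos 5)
Your proposal is correct and matches the paper's own argument exactly: the paper also deduces $\B(\lambda)=\B_0(\lambda)$ from \cite[Theorem 4.1]{H}, deduces $\CB(\lambda)=\CB_0(\lambda)$ from Theorem \ref{thm.mainb}, and then applies the isomorphism of Theorem \ref{thm.mainb0}.
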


\section{Rank $2$ case.}\label{sec.rank2}
\subsection{LS paths in the rank $2$ case.}
%
Let $A$ and  $\lambda=k_1\Lambda_1-k_2\Lambda_2$ be as in \S \ref{sec.maintheorems}.
In this subsection,
we recall some properties of $\B(\lambda)$  from \cite{H}.
We define the sequence $\{ p_m \}_{m\in \Z} $ of integers
by the following recursive formulas: for $m\geq 0$,
\begin{align}\label{eq.pm}
	p_0:=k_2, \quad
	p_1:=k_1, \quad
	p_{m+2}:=
	\begin{cases}
		a_2 p_{m+1}-p_m & \text{if} \ m \ \text{is even}, \\
		a_1 p_{m+1}-p_m & \text{if} \ m \ \text{is odd}; \\
	\end{cases}
\end{align}
for  $m<0$,
	\begin{align}\label{eq.pm2}
		p_{m}=
		\begin{cases}
			a_2 p_{m+1}-p_{m+2} & \text{if} \ m \ \text{is even}, \\
			a_1 p_{m+1}-p_{m+2} & \text{if} \ m \ \text{is odd}; \\
		\end{cases}
\end{align}
it follows from \cite[Remark 3.7]{H}
(and  the comment in \cite[\S 3.1]{Yu})
that
\begin{align}\label{eq.pmpos}
	p_m>0 \text{ for all } m \in \Z.
\end{align}
%
Notice that $W=\{ x_m \mid m \in \Z \}$, where
\begin{align}
x_{m}:=
	\begin{cases}
		(s_2 s_1)^n & \text{ if } m=2n \text{ with } n \in \Z_{\geq 0}, \\
		s_1(s_2 s_1)^n & \text{ if } m=2n+1 \text{ with } n \in \Z_{\geq 0}, \\
		(s_1 s_2)^{-n} & \text{ if } m=2n \text{ with } n \in \Z_{\leq 0}, \\
		s_2(s_1 s_2)^{-n} & \text{ if } m=2n-1 \text{ with } n \in \Z_{\leq 0}.
	\end{cases}
\end{align}
Then we have
\begin{align}\label{eq.xm}
	x_m\lambda=
	\begin{cases}
		p_{m+1}\Lambda_1-p_m\Lambda_2 &
		\text{if } m  \text{ is even},\\
		-p_m\Lambda_1+p_{m+1}\Lambda_2 &
		\text{if } m  \text{ is odd},\\
	\end{cases}
\end{align}
for $m \in \Z$
by \cite[Lemma 3.3]{H}.
\begin{proposition}[{\cite[Proposition 3.8]{H}}]\label{prop.Hasse}
	The Hasse diagram of $W\lambda$ in the ordering of {\rm Definition \ref{order}} is
	\begin{align}\label{eq.Hasse}
		\cdots \xleftarrow{\alpha_1} x_2\lambda \xleftarrow{\alpha_2} x_1\lambda \xleftarrow{\alpha_1}
		x_0\lambda
		\xleftarrow{\alpha_2} x_{-1}\lambda \xleftarrow{\alpha_1} x_{-2}\lambda
		\xleftarrow{\alpha_2} \cdots.
	\end{align}
\end{proposition}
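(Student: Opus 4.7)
The plan is to proceed in three steps: (i) verify that each arrow displayed in \eqref{eq.Hasse} is a genuine edge of the Hasse diagram; (ii) observe that these edges assemble into a total order on $W\lambda$; and (iii) rule out any additional edges.

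For step (i), I would directly compute, using \eqref{eq.xm} together with the recurrences \eqref{eq.pm}--\eqref{eq.pm2}, that $s_{\beta_m}x_m\lambda = x_{m-1}\lambda$ and $\pair{x_m\lambda}{\beta_m^\vee} = -p_m$, where $\beta_m = \alpha_1$ if $m$ is odd and $\beta_m = \alpha_2$ if $m$ is even. The first identity, after expanding the simple reflection $s_{\beta_m}$, reduces to the relation $p_{m+1}+p_{m-1} = a_1 p_m$ for $m$ even (symmetrically $p_{m+1}+p_{m-1} = a_2 p_m$ for $m$ odd), which is a direct consequence of the recurrence; the second is immediate from \eqref{eq.xm}. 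Since $p_m > 0$ by \eqref{eq.pmpos}, the sign condition $\pair{\nu_{k-1}}{\beta_k^\vee}<0$ of Definition \ref{order} holds, so the one-step chain $(x_m\lambda, x_{m-1}\lambda)$ witnesses $x_m\lambda \geq x_{m-1}\lambda$ and realises the edge $x_m\lambda \xleftarrow{\beta_m} x_{m-1}\lambda$.

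For step (ii), concatenating the one-step chains from step (i) and using transitivity of $\geq$ yields $x_m\lambda \geq x_{m'}\lambda$ whenever $m > m'$. The elements $x_m\lambda$ ($m\in\Z$) are pairwise distinct: every pairing $\pair{x_m\lambda}{\alpha_i^\vee}$ equals $\pm p_m$ or $\pm p_{m+1}$, hence is nonzero, so no positive real coroot annihilates $\lambda$; consequently the stabiliser of $\lambda$ in $W$ is trivial and $m\mapsto x_m\lambda$ is a bijection $\Z\to W\lambda$. It follows that $(W\lambda,\geq)$ is totally ordered with $x_m\lambda > x_{m'}\lambda$ iff $m > m'$.

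For step (iii), any chain $x_m\lambda = \nu_0,\nu_1,\ldots,\nu_u = x_{m'}\lambda$ as in Definition \ref{order} is strictly decreasing in $\geq$ (the sign condition forces $\nu_{k-1}\neq\nu_k$), so by step (ii) the indices $j_k$ with $\nu_k = x_{j_k}\lambda$ form a strictly decreasing integer sequence $m = j_0 > j_1 > \cdots > j_u = m'$. Consequently $\mathrm{dist}(x_m\lambda,x_{m'}\lambda) = m-m'$, which equals $1$ exactly when $m-m'=1$; by Remark \ref{rem.1} the label of the resulting edge is unique, and by step (i) it is $\alpha_1$ or $\alpha_2$ precisely as in \eqref{eq.Hasse}. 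The main obstacle is the rank-$2$ reflection bookkeeping in step (i), particularly the parity case analysis and the choice of the correct half of the recurrence for $\{p_m\}$; once this is settled, the rest is essentially formal, because the parameterisation $W\lambda = \{x_m\lambda\}_{m\in\Z}$ reduces the distance question to monotonicity on $\Z$.
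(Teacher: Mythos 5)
Your step (i) is correct: the identities $s_{\beta_m}x_m\lambda=x_{m-1}\lambda$ and $\pair{x_m\lambda}{\beta_m^\vee}=-p_m<0$ (with $\beta_m=\al_1$ for $m$ odd, $\al_2$ for $m$ even) follow from \eqref{eq.xm}, the recurrences \eqref{eq.pm}--\eqref{eq.pm2} and \eqref{eq.pmpos}, exactly as you say, and they produce the displayed edges. The genuine gap is at the hinge between steps (ii) and (iii). What you have proved is only the implication $m>m'\Rightarrow x_m\lambda\geq x_{m'}\lambda$; together with injectivity of $m\mapsto x_m\lambda$ this does \emph{not} yield ``$(W\lambda,\geq)$ is totally ordered with $x_m\lambda>x_{m'}\lambda$ iff $m>m'$''. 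The ``only if'' direction (equivalently, antisymmetry of $\geq$ on this orbit) is precisely the nontrivial content of the proposition, and you assume it rather than prove it: nothing in your argument rules out a legal step $\nu_k=s_{\beta}(\nu_{k-1})$ with $\beta\in\derep$ and $\pair{\nu_{k-1}}{\beta^\vee}<0$ that \emph{increases} the index, since $\beta$ ranges over all positive real roots while step (i) only examines the two simple reflections. Without this monotonicity, chains could wander, $\mathrm{dist}(x_m\lambda,x_{m'}\lambda)$ need not equal $m-m'$, and reversed or extra edges are not excluded, so step (iii) collapses.

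The missing step does have a short proof, which is what your write-up needs: if $\pair{x_j\lambda}{\beta^\vee}<0$ for some $\beta\in\derep$, then $s_\beta x_j\lambda-x_j\lambda=-\pair{x_j\lambda}{\beta^\vee}\,\beta$ is a nonzero nonnegative integer combination of $\al_1,\al_2$; on the other hand, by \eqref{eq.xm} and \eqref{eq.pmpos} one has $x_{j'}\lambda-x_j\lambda=\sum_{k=j'+1}^{j}p_k\al_{i_k}$ for $j'\leq j$ (this is Lemma \ref{lem.xkl} of the paper, with $i_k=1$ for $k$ odd and $i_k=2$ for $k$ even), which is a strictly positive combination when $j'<j$ and a strictly negative one when $j'>j$. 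Hence $s_\beta x_j\lambda=x_{j'}\lambda$ forces $j'<j$, every chain in Definition \ref{order} strictly decreases the index, and then your computation $\mathrm{dist}(x_m\lambda,x_{m'}\lambda)=m-m'$ and the identification of the dist-one pairs and labels go through. (The same computation gives the injectivity of $m\mapsto x_m\lambda$ at once; your stabiliser argument as stated only handles reflections, since nonvanishing of $\pair{\lambda}{\beta^\vee}$ for real coroots does not by itself exclude fixed vectors of the infinite-order elements $(s_1s_2)^n$.) For the record, the paper offers no proof to compare with: it quotes the statement from \cite[Proposition 3.8]{H}.
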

For each $\nu \in W\lambda $,
there exists a unique $m\in \Z$ such that $\nu=x_m\lambda$. Then
we define  $z(\nu):=m$.
We set
\begin{align}
	\B_1(\lambda):=\{ (\nu_1, \ldots, \nu_u;
	\sigma_1, \ldots, \sigma_u) \in \B(\lambda) \mid
	z(\nu_v)-z(\nu_{v+1})=1 \text{ for } v=1, \ldots, u-1 \};
\end{align}
note that $\pi_\lambda=(\lambda; 0, 1) \in \B_1(\lambda)$.
We know from \cite[Theorem 4.17]{H} that
$\B_1(\lambda) \cup \{ \0 \}$ is stable under the action of  $\e_i, \f_i$ for $i \in I=\{ 1, 2 \}$.
Hence, $\B_1(\lambda)$  is a subcrystal of $\B(\lambda)$
(but not necessarily, a connected component of $\B(\lambda)$).
Since $\B_0(\lambda)$ is the connected component of $\B(\lambda)$  containing $\pi_\lambda$,
by the definition,
it follows that
\begin{align}\label{eq.b01}
	\B_0(\lambda) \subset \B_1(\lambda).
\end{align}
We deduce by \eqref{eq.pmpos} and   \eqref{eq.xm}
that an element  $\pi\in \B_1(\lambda)$  is of the form
%
\begin{align}\label{eq.pib1}
	\pi =
	\biggl(
	x_{m}\lambda, x_{m-1}\lambda, \ldots, x_{n}\lambda;
	0, \dfrac{q_{m}}{p_{m}}, \dfrac{q_{m-1}}{p_{m-1}}, \ldots, \dfrac{q_{n+1}}{p_{n+1}}, 1
	\biggr),
\end{align}
where $n\leq m$, and $q_m, q_{m-1}, \ldots, q_{n+1}$ are integers satisfying
\begin{align}
	0<q_j<p_j \text{ for } n+1\leq j \leq m, \text{ and }
	\dpq{j+1}<\dpq{j}  \text{ for } n+1\leq j \leq m-1.\label{eq.prq}
\end{align}
%
\begin{remark}\label{rem.alt}
	Let
	$\pi = (x_{n+s-1}\lambda, \ldots, x_{n+1}\lambda, x_n\lambda;
	\sigma_0, \sigma_1, \ldots, \sigma_s ) \in \B_1(\lambda)$,
	and $i\in I=\{ 1, 2\}$.
	We see by \eqref{eq.pmpos},  \eqref{eq.xm}, and the definition of $\B_1(\lambda)$ that
	the function $H_i^\pi(t)=\pair{\pi(t)}{\alc_i}$ attains
	either a maximal value or a minimal value at $t \in [0, 1]$ if and only if
	$t \in \{ 0=\sigma_0, \sigma_1, \ldots, \sigma_s=1 \}$.
	Moreover,
	if $H_i^\pi(t)$ attains a minimal (resp., maximal) value at
	$t=\sigma_v$, then $H_i^\pi(t)$
	attains a minimal (resp., maximal) value at $t=\sigma_u$ for all  $u=0, 1, \ldots, s$
	such that $u  \equiv v$ mod $2$.
\end{remark}
\begin{remark}\label{rem.primeb1}
	Assume that  $k_1$ and $k_2$ are relatively prime.
	We know from \cite[Lemma 4.5 (3)]{H} that
	an LS path of shape $\lambda=k_1\Lambda_1-k_2\Lambda_2$ is of the form
	\eqref{eq.pib1}.
	Hence we have   $\B_1(\lambda)=\B(\lambda)$.
\end{remark}
\begin{theorem}[{\cite[Theorem 4.1]{H}}]\label{thm.LScon}
		If $k_1=1$ or $k_2=1$,
		that is,
		$\lambda \in P$ is of the form either
		$k_1\Lambda_1-\Lambda_2$ with $1\leq k_1 < a_1-1$ or
		$\Lambda_1-k_2\Lambda_2$ with $1<k_2\leq a_2-1 $,
		then the crystal graph of $\B(\lambda)$ is connected.
		Otherwise, the crystal graph of  $\B(\lambda)$ has infinitely many connected
		components.
\end{theorem}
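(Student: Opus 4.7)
The strategy is to treat the two assertions separately, exploiting the explicit parametrization \eqref{eq.pib1} of paths in $\B_1(\lambda)$ together with the Hasse diagram of Proposition \ref{prop.Hasse}.

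For the connected direction, suppose $k_1 = 1$ or $k_2 = 1$. Then $k_1$ and $k_2$ are coprime, so Remark \ref{rem.primeb1} gives $\B(\lambda) = \B_1(\lambda)$, and every LS path is of the form \eqref{eq.pib1}. I would introduce a complexity function
\[
c(\pi) := (m - n) + \sum_{j=n+1}^{m}(p_j - q_j)
\]
that vanishes precisely on $\pi_\lambda$, and prove by induction on $c(\pi)$ that $\pi$ lies in the connected component of $\pi_\lambda$. The inductive step amounts to showing that for every $\pi \in \B_1(\lambda) \setminus \{\pi_\lambda\}$ there exists $i \in \{1, 2\}$ with $\e_i \pi \neq \0$ or $\f_i \pi \neq \0$ for which $c$ strictly decreases. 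To find such an $i$, I would analyze the piecewise-linear function $H_i^\pi(t) = \pair{\pi(t)}{\alc_i}$: by \eqref{eq.xm} its slope on the segment corresponding to $x_j\lambda$ equals $\pm p_j$ or $\pm p_{j+1}$, and using Remark \ref{rem.alt} the locations of the extrema of $H_i^\pi$ are confined to the breakpoints $\tpq{j}$. The explicit formulas for $\e_i$ and $\f_i$ recalled in \S \ref{section.LSpath} then transform the extremal segment in a way that either merges two adjacent directions (decreasing $m - n$) or increases a single $q_j$ toward $p_j$ (decreasing $\sum (p_j - q_j)$), giving the desired reduction in $c(\pi)$.

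For the disconnectedness direction, when $k_1, k_2 \geq 2$ I would construct an explicit infinite family $\{\pi^{(N)}\}_{N \geq 1} \subset \B(\lambda)$ of LS paths lying in pairwise distinct connected components. Because $\gcd(k_1,k_2) \geq 1$ together with $k_1, k_2 \geq 2$ permits $\sigma$-chains with fractional data $q_j/p_j$ having common factors, one can write down paths in $\B(\lambda) \setminus \B_1(\lambda)$ or paths in $\B_1(\lambda)$ supported on long stretches of $W\lambda$, arranged so that some numerical invariant preserved by every root operator (for example, a gcd of denominators appearing in the rational data, or the parity/length of the support interval $[n, m]$) separates them. The invariance would be verified by examining, case by case, how $\e_i$ and $\f_i$ act on the tuple $(q_{n+1}/p_{n+1}, \ldots, q_m/p_m)$, using once more that the slopes of $H_i^\pi$ are the $p_j$'s.

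The main obstacle I anticipate is the complexity-reduction step in the connected case. The root operators can act nonlocally on the rational data, simultaneously modifying several $\tpq{j}$ and possibly inserting or deleting segments; ensuring that at least one of $\e_1\pi, \e_2\pi, \f_1\pi, \f_2\pi$ strictly decreases $c(\pi)$ without leaving $\B_1(\lambda)$ requires a careful case distinction on where the extrema of $H_i^\pi$ occur relative to the endpoints $t = 0$ and $t = 1$, and on whether $x_m\lambda$ and $x_n\lambda$ lie on the dominant or antidominant side of $\alc_i$ (controlled by \eqref{eq.xm} and the positivity \eqref{eq.pmpos} of $p_m$). In particular, the asymmetric recursion \eqref{eq.pm} makes consecutive denominators $p_j, p_{j+1}$ differ wildly, and guaranteeing that a merge of adjacent segments continues to satisfy the $\sigma$-chain condition is where the bulk of the technical work will lie.
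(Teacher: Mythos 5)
This statement is not proved in the paper at all: it is imported verbatim from \cite[Theorem 4.1]{H} (the author's earlier article), so there is no internal proof to compare your argument with, and your proposal has to be judged on its own. On its own terms it is an outline whose two essential steps are missing. For the connectedness half, the reduction $\B(\lambda)=\B_1(\lambda)$ via Remark \ref{rem.primeb1} is correct, but the induction is not set up correctly and its key step is only announced. First, your complexity function $c(\pi)=(m-n)+\sum_{j=n+1}^{m}(p_j-q_j)$ does not vanish only at $\pi_\lambda$: it vanishes on every straight-line path $(x_m\lambda;0,1)$, $m\in\Z$. This is repairable (those are the extremal paths obtained from $\pi_\lambda$ by the Weyl group action $S_w$, hence lie in $\B_0(\lambda)$), but the base case has to be handled. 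Much more seriously, the claim that every $\pi$ with $c(\pi)>0$ admits some $i$ and some choice of $\e_i$ or $\f_i$ strictly decreasing $c$ while remaining in $\B_1(\lambda)$ is precisely the nontrivial content of the theorem, and you give no argument for it. Single applications of the root operators can raise $c$: the computations in the proof of Theorem \ref{thm.mor} show that the interior action $q_{k'}\mapsto q_{k'}-1$ raises $c$ by $1$, and in Case 3 an application of $\e_i$ appends a new direction $x_{n-1}\lambda$ with coefficient $p_n-1$, raising $c$ by $2$; so establishing the existence of a decreasing move (or working instead with maximal powers $\e_i^{\ep_i(\pi)}$ and a different statistic, such as the number of directions) is where the whole proof lives, and it is absent.

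The disconnectedness half is essentially untouched. You propose ``some numerical invariant preserved by every root operator (for example, a gcd of denominators \dots, or the parity/length of the support interval $[n,m]$)'' but define none and verify nothing. The length and the endpoints of the support interval are demonstrably not invariant: a single $\e_i$ can change $n$ to $n-1$ (again Case 3 of the proof of Theorem \ref{thm.mor}), and $\e_i/\f_i$ can delete or create segments. The gcd idea is also problematic as stated: the theorem asserts infinitely many components whenever $k_1,k_2\geq 2$, including coprime cases such as $k_1=2$, $k_2=3$, where $\B(\lambda)=\B_1(\lambda)$ and every path has the fixed denominators $p_j$, so it is unclear which gcd could separate components there. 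Without an explicit family $\{\pi^{(N)}\}$ and an explicitly verified operator-invariant quantity distinguishing them, this direction remains a wish list rather than a proof.
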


\subsection{Polyhedral realizations of $\CB(\pm\infty)$ in the rank $2$ case.}\label{sec.poly2}
Let $A$ and  $\lambda=k_1\Lambda_1-k_2\Lambda_2$ be as in \S \ref{sec.maintheorems}.
Set $\iota^+:=(\ldots, 2, 1, 2, 1)$ and $\iota^-:=(2, 1, 2, 1, \ldots)$.
We define the sequence $\{ c_j \}_{j\in \Z} $ of integers
by the following recursive formulas: for $j\geq 1$,
\begin{align}
	c_1:=1, \quad
	c_2:=a_1, \quad
	c_{j+2}:=
	\begin{cases}
		a_1 c_{j+1}-c_j & \text{ if }  j  \text{ is even}, \\
		a_2 c_{j+1}-c_j & \text{ if }  j  \text{ is odd}; \\
	\end{cases}
\end{align}
for $j\leq 0$,
\begin{align}
	c_0:=1, \quad
	c_{-1}:=a_2, \quad
	c_{j-2}:=
	\begin{cases}
		a_1  c_{j-1}-c_j & \text{ if }  j  \text{ is even}, \\
		a_2  c_{j-1}-c_j & \text{ if }  j  \text{ is odd}. \\
	\end{cases}
\end{align}
Applying \cite[Theorem 4.1]{NZ} to our rank $2$ case,
we obtain the following explicit description of
the image of $\Psi^+_{\iota^+}$ and $\Psi^-_{\iota^-}$.
\begin{proposition}\label{prop.impsi}
	It hold that
	\begin{align}
		\img(\Psi^+_{\iota^+})&=
		\{(\ldots, y_2, y_1 )\in \Z^{+\infty}_{\geq 0} \mid c_l y_l - c_{l-1} y_{l+1} \geq 0 \text{\rm{ for }} l \geq 2 \},\\
		\img(\Psi^-_{\iota^-})&=
		\{(y_0, y_{-1}, \ldots )\in \Z^{-\infty}_{\leq 0} \mid c_l y_l - c_{l+1} y_{l-1} \leq 0 \text{\rm{ for }} l \leq -1 \}.
	\end{align}
\end{proposition}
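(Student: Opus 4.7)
The plan is to apply \cite[Theorem 4.1]{NZ}, which describes $\img(\Psi^+_{\iota^+})$ as the subset of $\Z^{+\infty}_{\geq 0}$ cut out by a family of linear inequalities built from an explicit recursion that depends on $\iota^+$. After specialising to our rank $2$ situation with $\iota^+=(\ldots,2,1,2,1)$, this recursion simplifies dramatically because there are only two simple roots and the sequence alternates, so I would carry out the NZ recursion by hand and match the coefficients it produces with the recurrence defining $\{c_j\}$.

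First I would fix the notation from \cite[\S 4]{NZ} for the linear functions produced by the recursion and verify the base case at $l=2$: this should give the inequality $a_1 y_2 - y_3 \geq 0$, which matches $c_2 y_2 - c_1 y_3 \geq 0$ since $c_1=1$ and $c_2=a_1$. Then I would induct on $l \geq 2$, showing that the non-trivial new inequality appearing at stage $l+1$ is $c_{l+1} y_{l+1} - c_l y_{l+2} \geq 0$, with the new coefficient satisfying $c_{l+1} = a_{i_l} c_l - c_{l-1}$; since $i_l = 2$ when $l$ is even and $i_l = 1$ when $l$ is odd, this matches the recursion defining $\{c_j\}$ for $j \geq 1$. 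The remaining linear functions produced by the NZ procedure will be redundant, either implied by the positivity conditions $y_l \geq 0$ already built into $\Z^{+\infty}_{\geq 0}$, or by the inequalities already extracted at earlier stages.

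The argument for $\img(\Psi^-_{\iota^-})$ is parallel: the NZ recursion for $\iota^-=(2,1,2,1,\ldots)$ runs in the opposite direction over non-positive indices, and after flipping signs (since the $y_j$ are now non-positive) one obtains $c_l y_l - c_{l+1} y_{l-1} \leq 0$ for $l \leq -1$. The main obstacle is purely bookkeeping: one must track the parity of the index $l$ carefully so that the correct Cartan entry $a_1$ or $a_2$ is inserted at each step of the recursion, and one must check that the ``short'' NZ inequalities are all redundant given the sign constraints. No genuinely new idea beyond \cite[Theorem 4.1]{NZ} is needed; the content of the proposition is really the identification of the NZ coefficients with the sequence $\{c_j\}$ defined by the Chebyshev-like recursion above.
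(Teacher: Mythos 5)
Your approach coincides with the paper's own: the paper proves this proposition simply by citing \cite[Theorem 4.1]{NZ} and specializing to the alternating rank-$2$ sequences $\iota^{\pm}$, which is exactly the computation you outline, and your identification of the coefficients produced by the Nakashima--Zelevinsky recursion with the sequence $\{c_j\}$ (base case $c_1=1$, $c_2=a_1$, step $c_{l+1}=a_{i_l}c_l-c_{l-1}$) is correct. The only point to make explicit in a write-up is the verification of the positivity assumption required by \cite[Theorem 4.1]{NZ}, which holds here because $c_j>0$ for all $j$ (a consequence of $a_1,a_2\ge 2$ and $a_1a_2>4$), and the analogous fact on the negative side.
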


The following  lemma will be needed in the next section.
Recall that $\{p_m\}_{m\in \Z}$ is defined by the recursive formulas \eqref{eq.pm} and \eqref{eq.pm2}.
\begin{lemma}\label{lem.wd}
	Let $m, n\in \Z$ be such that $n < m$
	 and  let $q_{n+1}, q_{n+2}, \ldots, q_{m} \in \Z$ be such that
	 $0<q_j<p_j$ for $n+1\leq j \leq m$,  and
	 $\tpq{j+1}<\tpq{j}$  for $n+1\leq j \leq m-1$.
	\begin{enumerate}[\upshape(1)]
		\item If $0 <  m$, then $(\ldots, 0, p_m, \ldots, p_2, p_1) \in
		\img(\Psi^+_{\iota^+})$.
		\item If $0 < n < m$, then
			$(\ldots, 0, q_m, \ldots, q_{n+2}, q_{n+1}, p_n, \ldots, p_2, p_1) \in
			\img(\Psi^+_{\iota^+})$.\label{enu.lem.wd1}
		\item  If $ 0 < m$, then $(\ldots, 0, q_m, \ldots, q_2, q_1) \in
		\img(\Psi^+_{\iota^-})$.
		\item  If $n < 0$, then $ (q_{0}-p_0, q_{-1}-p_{-1},\ldots, q_{n+1}-p_{n+1}, 0, \dots)\in \img(\Psi^{-}_{\iota^-})$.
		\item If $n < m < 0$, then
		$(-p_0, -p_{-1}, \ldots, -p_{m+1},  q_m-p_m, \ldots, q_{n+1}-p_{n+1}, 0, \dots) \in \img(\Psi^{-}_{\iota^-})$.\label{enu.lem.wd2}
		\item  If $n <0$, then $(-p_0, -p_{-1},\ldots, -p_{n+1}, 0, \dots)\in \img(\Psi^{-}_{\iota^-})$.
	\end{enumerate}
\end{lemma}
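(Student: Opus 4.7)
The plan is to invoke Proposition~\ref{prop.impsi}: an element of $\Z^{+\infty}_{\geq 0}$ lies in $\img(\Psi^+_{\iota^+})$ iff $c_l y_l - c_{l-1} y_{l+1} \geq 0$ for every $l \geq 2$, and dually an element of $\Z^{-\infty}_{\leq 0}$ lies in $\img(\Psi^-_{\iota^-})$ iff $c_l y_l - c_{l+1} y_{l-1} \leq 0$ for every $l \leq -1$. The sign and finite-support conditions defining $\Z^{+\infty}_{\geq 0}$ and $\Z^{-\infty}_{\leq 0}$ are immediate from $p_j > 0$ for all $j$ (by \eqref{eq.pmpos}), $0 < q_j < p_j$, and the fact that each sequence in the lemma has finitely many nonzero entries. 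Thus the whole lemma reduces to verifying the inequalities in Proposition~\ref{prop.impsi}.

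The heart of the argument is two telescoping identities:
\begin{align}
c_l p_l - c_{l-1} p_{l+1} = k_1 \text{ for all } l \geq 2, \qquad c_l p_l - c_{l+1} p_{l-1} = k_2 \text{ for all } l \leq -1.
\end{align}
To prove these, I would observe that the defining recurrences of $\{p_m\}$ and $\{c_j\}$ share the same parameter ($a_1$ or $a_2$) at matched indices: for $l$ even with $l \geq 3$, both $c_l = a_1 c_{l-1} - c_{l-2}$ and $p_{l+1} = a_1 p_l - p_{l-1}$ hold, while for $l$ odd the common parameter is $a_2$. Substituting yields $c_l p_l - c_{l-1} p_{l+1} = c_{l-1} p_{l-1} - c_{l-2} p_l$ for $l \geq 3$, and a direct computation gives $c_2 p_2 - c_1 p_3 = k_1$. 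The second identity is proved symmetrically from the backward recurrences, with base case $c_{-1} p_{-1} - c_0 p_{-2} = k_2$.

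With these identities in hand, each part reduces to a short case check. For parts (1)--(3), entries are nonnegative. At the right boundary $l = m$, the inequality becomes $c_m y_m \geq 0$, which is obvious. At an interior index where $y_l = p_l$ and $y_{l+1} = p_{l+1}$, it is precisely the first identity. At an interior index where $y_l = q_l$ and $y_{l+1} = q_{l+1}$, the first identity gives $c_{l-1}/c_l < p_l / p_{l+1}$, while the hypothesis $\tpq{l+1} < \tpq{l}$ gives $q_l / q_{l+1} > p_l / p_{l+1}$, so $c_l q_l > c_{l-1} q_{l+1}$. The sole mixed boundary, at $l = n$ in part \eqref{enu.lem.wd1}, gives $c_n p_n - c_{n-1} q_{n+1} = k_1 + c_{n-1}(p_{n+1} - q_{n+1}) > 0$. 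Parts (4)--(6) are dual: entries are now nonpositive (either $-p_j$ or $q_j - p_j$), and the second identity plays the role of the first. The interior $q$-to-$q$ comparison becomes $(p_l - q_l)/(p_{l-1} - q_{l-1}) > p_l / p_{l-1} > c_{l+1}/c_l$, yielding $c_l (q_l - p_l) - c_{l+1}(q_{l-1} - p_{l-1}) < 0$, and the interface at $l = m+1$ in part \eqref{enu.lem.wd2} gives $c_{m+1}(-p_{m+1}) - c_{m+2}(q_m - p_m) = -k_2 - c_{m+2} q_m < 0$.

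The main obstacle is not any single step---each inequality is elementary---but rather the bookkeeping needed to enumerate the boundary and interior cases across all six parts. All mathematical content is concentrated in the two telescoping identities together with one ratio comparison; everything else is tabulation.
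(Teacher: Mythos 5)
Your proposal is correct and follows essentially the same route as the paper: reduce to the defining inequalities of Proposition \ref{prop.impsi}, establish the comparison between $c_l p_l$ and $c_{l-1}p_{l+1}$ (resp.\ $c_{l+1}p_{l-1}$) from the matched recurrences, and settle the mixed and $q$-to-$q$ cases via the ratio hypothesis $\tpq{j+1}<\tpq{j}$. Your exact telescoping identities $c_l p_l - c_{l-1}p_{l+1}=k_1$ for $l\geq 2$ and $c_l p_l - c_{l+1}p_{l-1}=k_2$ for $l\leq -1$ are a mild sharpening of the paper's inequalities \eqref{eq.cp1} and \eqref{eq.cp2}, which the paper proves by the same induction.
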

\begin{proof}
	We give proofs only for  parts \eqref{enu.lem.wd1} and \eqref{enu.lem.wd2};
	the proof for the other cases is easier than these cases.

	First, we show part \eqref{enu.lem.wd1}.
	By Proposition \ref{prop.impsi}, it suffices to show that
	\begin{align}
		\label{eq.wd1}
		&c_j p_j -c_{j-1} p_{j+1} \geq 0 \quad \text{ for } 2 \leq j \leq n-1,\\
		\label{eq.wd2}
		&c_j p_j -c_{j-1} q_{j+1} \geq 0 \quad \text{ for } j= n,\\
		\label{eq.wd3}
		&c_j q_j -c_{j-1} q_{j+1} \geq 0 \quad \text{ for } n+1 \leq j \leq m-1.
	\end{align}
	We can easily see by induction on $j$ that
	\begin{align}\label{eq.cp1}
		c_j p_j -c_{j-1} p_{j+1} \geq 0 \quad \text{ for } j \geq 2.
	\end{align}
	Thus we get \eqref{eq.wd1}.
	Since $\tpq{n+1} <1$, we see that $c_n p_n -c_{n-1} q_{n+1} >c_n p_n -c_{n-1} p_{n+1}$.
	Combining this inequality and \eqref{eq.cp1}, we obtain \eqref{eq.wd2}.
	For $n+1 \leq j \leq m-1$,
	we see that $c_j q_j -c_{j-1} q_{j+1} > c_j (q_{j+1} p_j / p_{j+1}) -c_{j-1} q_{j+1} =(\tpq{j+1})(c_j p_j -c_{j-1} p_{j+1})$ since $\tpq{j+1}<\tpq{j}$.
	Combining this inequality and \eqref{eq.cp1}, we obtain \eqref{eq.wd3}.
	Thus we have proved part \eqref{enu.lem.wd1}.

	Next, we show part \eqref{enu.lem.wd2}.
	Similarly, it suffices to show that
	\begin{align}
		\label{eq.wd4}
		&c_j (-p_j) -c_{j+1} (-p_{j-1}) \leq 0 \quad \text{ for } m+2 \leq j\leq -1,\\
		\label{eq.wd5}
		&c_j (-p_j) -c_{j+1} (q_{j-1}-p_{j-1}) \leq 0 \quad \text{ for } j= m+1,\\
		\label{eq.wd6}
		&c_j (q_j-p_j)  -c_{j+1} (q_{j-1}-p_{j+1}) \leq 0 \quad \text{ for } n+2 \leq j \leq m.
	\end{align}
	We can easily see 	by induction on $j$ that
	\begin{align}\label{eq.cp2}
		-c_j p_j +c_{j+1} p_{j-1} \leq 0 \quad \text{ for } j \leq -1.
	\end{align}
	Thus we get \eqref{eq.wd4}.
	We see that
	$c_{m+1} (- p_{m+1}) -c_{m+2} (q_{m}-p_m )=-c_{m+1}p_{m+1} +c_{m+2}p_m  -c_{m+2} q_{m}$.
	Combining this equality and  \eqref{eq.cp2},
	we obtain \eqref{eq.wd5}.
	For $n+2 \leq j \leq m$,
	we see that
	\begin{align}
		c_j (q_j-p_j) -c_{j+1} (q_{j-1}-p_{j-1})
		&=c_j q_j-c_{j+1} q_{j-1} -c_j p_j +c_{j-1} p_{j-1}\\
		&<c_j \bra{ \frac{q_{j-1} p_{j}}{p_{j-1}} } -c_{j+1} q_{j-1} +(-c_j p_j +c_{j+1} p_{j-1})\\
		&=\bra{1-\dpq{j-1}} (-c_jp_j +c_{j+1} p_{j-1})
	\end{align}
	since $\tpq{j}<\tpq{j-1}$.
	Combining this inequality and  \eqref{eq.cp2},
	 we obtain \eqref{eq.wd6}.
	Thus we have proved part \eqref{enu.lem.wd2}.
\end{proof}

\section{Proofs.}\label{sec.mainprf}
\subsection{Proof of Theorem \ref{thm.mainb0}.}\label{sec.mainprf1}

Let $\iota := (\iota^+, \iota^-)= ((\ldots, 2, 1, 2, 1), (2, 1, 2, 1, \ldots) )$,
and let  $\lambda=k_1\Lambda_1-k_2\Lambda_2$ be as in \S \ref{sec.maintheorems}.
We define a map $\mor$ from $\B_1(\lambda) \cup \{\0\}$ to
$\img(\Psi^+_{\iota^+}) \otimes \CT_\lambda \otimes \img(\Psi^-_{\iota^-}) \cup \{\0\}$
as follows.
First, we set $\mor(\0):=\0$.
Let
\begin{align}\label{eq.pib1re}
	\pi =
	\biggl(
	x_{m}\lambda, x_{m-1}\lambda, \ldots, x_{n}\lambda;
	0, \dfrac{q_{m}}{p_{m}}, \dfrac{q_{m-1}}{p_{m-1}}, \ldots, \dfrac{q_{n+1}}{p_{n+1}}, 1
	\biggr)\in \B_1(\lambda),
\end{align}
where $n\leq m$, and $q_m, q_{m-1}, \ldots, q_{n+1}$ are integers satisfying
$0<q_j<p_j$  for  $n+1\leq j \leq m$  and
$\tpq{j+1}<\tpq{j}$   for  $n+1\leq j \leq m-1$.
We set
\begin{align}\label{eq.mordef}
	z_k=z_k(\pi):=
	\begin{cases}
		q_k & \text{ if } 1 \leq k \text{ and } n+1 \leq k \leq m,\\
		p_k & \text{ if } 1 \leq k \text{ and } k \leq n,\\
		q_k-p_k & \text{ if } k \leq 0 \text{ and } n+1 \leq k \leq m,\\
		-p_k & \text{ if }  k \leq 0 \text{ and }  m+1 \leq k,\\
		0 & \text{ otherwise},
	\end{cases}
\end{align}
for $k \in \Z$,
and then define $\mor(\pi):=( \ldots, z_2, z_1) \otimes t_{\lambda} \otimes (z_0, z_{-1}, \ldots)\in
\Z^{+\infty}_{\iota^{+}} \otimes \CT_\lambda \otimes \Z^{-\infty}_{\iota^{-}}$.
\begin{remark}\label{rem.mor}
	More explicitly, we can describe $\mor(\pi)$ as follows:
		\begin{enumerate}[\upshape(i)]
			\item if  $n = m = 0$, that is, $\pi=\pi_\lambda$, then $\mor(\pi)= (\ldots, 0, 0) \otimes t_{\lambda} \otimes (0, 0, \ldots)$;
			\item if $0 < n = m$, then $\mor(\pi)=(\ldots, 0, p_m, \ldots, p_2, p_1) \otimes t_{\lambda} \otimes (0,0, \ldots)$;
			\item  if  $0 < n < m$, then $\mor(\pi)=  (\ldots, 0, q_m, \ldots, q_{n+2}, q_{n+1}, p_n, \ldots, p_2, p_1) \otimes t_{\lambda} \otimes (0,0, \ldots)$;
			\item  if $n = 0 < m$, then $\mor(\pi)=(\ldots, 0, q_m, \ldots, q_2, q_1) \otimes t_{\lambda} \otimes (0,0, \ldots)$;
			\item if $n < 0 < m$, then  $ \mor(\pi)=(\ldots, 0, q_m, \ldots, q_2, q_1) \otimes t_{\lambda} \otimes (q_0-p_0, q_{-1}-p_{-1}, \ldots, q_{n+1}-p_{n+1}, 0, \dots)$;
			\item  if $n < m = 0$, then $\mor(\pi)=(\ldots, 0, 0) \otimes t_{\lambda} \otimes (q_{0}-p_0, q_{-1}-p_{-1},\ldots, q_{n+1}-p_{n+1}, 0, \dots)$;
			\item if $n < m < 0$, then $\mor(\pi)=(\ldots, 0, 0) \otimes t_{\lambda} \otimes (-p_0, -p_{-1}, \ldots, -p_{m+1},  q_m-p_m, q_{m-1}-p_{m-1}, \ldots, q_{n+1}-p_{n+1}, 0, \dots)$;
			\item  if $n = m <0$, then $\mor(\pi)= (\ldots, 0, 0) \otimes t_{\lambda} \otimes (-p_0, -p_{-1},\ldots, -p_{n+1}, 0, \dots)$.
		\end{enumerate}
	Therefore, by Lemma \ref{lem.wd}, we deduce that  $\mor(\pi) \in \img(\Psi^+_{\iota^+}) \otimes \CT_\lambda \otimes \img(\Psi^-_{\iota^-})$
	for $\pi \in \B_1(\lambda)$.
\end{remark}

\begin{theorem}\label{thm.mor}
	The map $\mor:\B_1(\lambda) \rightarrow \img(\Psi^+_{\iota^+}) \otimes \CT_\lambda \otimes \img(\Psi^-_{\iota^-})$
	is an embedding of crystals.
\end{theorem}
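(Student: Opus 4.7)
The plan is to verify, in order, that $\mor$ is well-defined, injective, preserves $\wt$, $\ep_i$ and $\ph_i$, and intertwines the Kashiwara operators $\e_i$, $\f_i$. Well-definedness is essentially already contained in Remark \ref{rem.mor} together with Lemma \ref{lem.wd}: the eight sub-cases there exhibit $\mor(\pi)$ explicitly as a tuple belonging to $\img(\Psi^+_{\iota^+}) \otimes \CT_\lambda \otimes \img(\Psi^-_{\iota^-})$. Injectivity is then immediate from the definition \eqref{eq.mordef}: the support of the two tensor factors determines $m$ and $n$, and the entries $z_k$ recover the $q_j$'s (with the correction $q_k = z_k + p_k$ when $k \leq 0$ and $n+1 \leq k \leq m$), so that $\pi$ is uniquely reconstructed via \eqref{eq.pib1re}.

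For the weight compatibility, I would rewrite $\pi(1) = \sum_v (\sigma_v - \sigma_{v-1}) x_{m-v+1}\lambda$ using \eqref{eq.xm}, apply the recurrences \eqref{eq.pm}--\eqref{eq.pm2}, and check by telescoping that the result equals $\lambda - \sum_k z_k \al_{i_k} = \wt(\mor(\pi))$. For the statistics $\ep_i$ and $\ph_i$, the crucial step is a matching lemma: for each $i \in \{1,2\}$ and each $k \in \Z$ with $i_k = i$, the quantity $\sigma_k(\mor(\pi))$ equals $-H_i^\pi(\sigma_{v(k)})$ for a canonical breakpoint index $v(k)$, determined by $k$, $m$, and the parity conventions of $\iota^\pm$. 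This matching is a linear algebra exercise using the explicit shape of $\mor(\pi)$, the values $\pair{x_j \lambda}{\alc_i} = \pm p_j$ or $\pm p_{j+1}$ from \eqref{eq.xm}, and Remark \ref{rem.alt}. Taking maxima over $k$ yields $\sigma_{(i)}(\mor(\pi)) = -m_i^\pi = \ep_i(\pi)$; the identity $\ph_i = \ep_i + \pair{\wt}{\alc_i}$ then forces agreement of $\ph_i$ as well.

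For the intertwining of $\e_i$ and $\f_i$, the idea is that Littelmann's rule \eqref{eq.et0}--\eqref{eq.et1} locates the times $t_0 < t_1$ among the breakpoints $\{\sigma_0, \ldots, \sigma_s\}$ (by Remark \ref{rem.alt}), and the action of $\e_i$ changes one specific $q_j$, namely the one corresponding to $t_1$, by $-1$. On the tensor-product side, $\e_i$ subtracts $1$ from $z_k$ at $k = \max M_{(i)}(\mor(\pi))$. The matching lemma above identifies $\max M_{(i)}$ with exactly the breakpoint index where $H_i^\pi$ first attains its minimum, namely $t_1$. Plugging this back into \eqref{eq.mordef} then yields $\mor(\e_i\pi) = \e_i\mor(\pi)$, case-by-case according to Remark \ref{rem.mor}; the analogous argument, using $\min M_{(i)}$ and \eqref{eq.ft0}--\eqref{eq.ft1}, handles $\f_i$.

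The main obstacle is the bookkeeping required to establish the matching lemma. The bijection $k \leftrightarrow v(k)$ depends on the sign of $k$ and the position of $v(k)$ relative to the ``pivot'' index $m+1$ (or $0$), together with parities dictated by $\iota^\pm$; this forces a case-split that is carried out essentially in parallel for the positive factor $\img(\Psi^+_{\iota^+})$ and the negative factor $\img(\Psi^-_{\iota^-})$. Once the matching is established, compatibility with $\e_i$ and $\f_i$ reduces to checking that the new tuple stays inside $\img(\Psi^+_{\iota^+}) \otimes \CT_\lambda \otimes \img(\Psi^-_{\iota^-})$ and corresponds to an LS path in $\B_1(\lambda)$, both of which follow from the inequalities already verified in Lemma \ref{lem.wd}.
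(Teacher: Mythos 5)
Your overall strategy coincides with the paper's: injectivity from the explicit formula \eqref{eq.mordef}, the telescoping weight computation, and above all the ``matching lemma'' identifying $\sigma_k(\mor(\pi))$ with $-H_i^\pi$ evaluated at a canonical breakpoint --- this is exactly the paper's Proposition \ref{prop.sigminval}, obtained by combining Lemmas \ref{lem.sig} and \ref{lem.minval}; taking maxima over $k$ gives $\ep_i(\pi)=\ep_i(\mor(\pi))$, and $\ph_i$ follows from $\ph_i=\ep_i+\pair{\wt}{\alc_i}$ on both sides. The only structural difference is that the paper deduces the $\f_i$-compatibility from normality of the two crystals once the $\e_i$-compatibility is proved, whereas you propose to redo the analysis with $\min M_{(i)}$ and \eqref{eq.ft0}--\eqref{eq.ft1}; either route is fine.

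The place where your plan is too quick is the final intertwining step. Knowing that $\max M_{(i)}$ corresponds to $t_1$ does not yet give the uniform recipe $z_k\mapsto z_k-\delta_{k,k'}$ on the path side: you must compute $\e_i\pi$ by Littelmann's rule, and this requires (a) showing that $t_0=t_1-1/p_{k'}$ lies \emph{strictly} after the preceding breakpoint, i.e.\ $\tpq{k'+1}<t_0$ --- the paper proves this by contradiction using the integrality property \eqref{eq.int}, \eqref{eq.estrict} and Remark \ref{rem.alt}, since equality would force a minimum value $\leq m^\pi_i$ at a time earlier than $t_1$, contradicting \eqref{eq.et1} --- and (b) treating separately the boundary cases: $k'=m$ with $q_m=1$ (the first direction disappears), and, when the minimum of $H_i^\pi$ is attained at $t=1$, $k'=n$ (a new direction $x_{n-1}\lambda$ is appended, so no existing $q_j$ is decremented; only in the $z$-coordinates is the change ``subtract $1$''). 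Neither of these checks follows from Lemma \ref{lem.wd}, which only yields membership of the tuples in $\img(\Psi^+_{\iota^+})\otimes\CT_\lambda\otimes\img(\Psi^-_{\iota^-})$; attributing the compatibility verification to that lemma is the gap in your write-up. Once (a) and (b) are supplied, your argument agrees with the paper's proof.
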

%
Assuming that Theorem \ref{thm.mor} is true,
we give a proof of Theorem  \ref{thm.mainb0}.
\begin{proof}[\it{Proof of Theorem \ref{thm.mainb0}}]
	Let $Z(\lambda):=\{ b \in \img(\Psi^+_{\iota^+}) \otimes \CT_\lambda \otimes \img(\Psi^-_{\iota^-}) \mid b^\ast  \text{ is extremal} \}.$
	We know from Theorem \ref{thm.cbext} and Corollary \ref{cor.polyhofex}
	that
	there exists an isomorphism $\Sigma: Z(\lambda) \rightarrow \CB(\lambda) $,
	which sends  $z_\lambda:=(\ldots, 0, 0) \otimes t_{\lambda} \otimes (0, 0, \ldots)$ to $u_\lambda$.
	Recall from \eqref{eq.b01} that $\B_0(\lambda)\subset \B_1(\lambda) $.
	Because $\mor(\pi_\lambda)=z_\lambda \in Z(\lambda)$, we see that
	$\mor(\B_0(\lambda))\subset Z(\lambda)$.
	Therefore
	it follows from Theorem \ref{thm.mor} that
	$\Sigma  \circ \left. \mor \right|_{\B_0(\lambda)} $ is an isomorphism of crystals from $\B_0(\lambda)$ onto $\CB_0(\lambda)$.
	Thus we have proved Theorem \ref{thm.mainb0}.
\end{proof}

The rest of this subsection is devoted to a proof of Theorem \ref{thm.mor}.
%
\begin{lemma}\label{lem.xkl}
	For $k \leq l$, it holds that
	\begin{align}
		x_k\lambda-x_l\lambda=\sum_{j=k+1}^l p_j \al_{i_j}.
	\end{align}
\end{lemma}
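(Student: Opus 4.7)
The plan is to proceed by induction on the length $l-k$, reducing to the case of adjacent elements in the Hasse diagram of $W\lambda$ described in Proposition \ref{prop.Hasse}.

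For the base case $l = k$, both sides of the asserted identity are zero. For the case $l = k+1$, I would read off from the Hasse diagram \eqref{eq.Hasse} that $x_{k+1}\lambda = s_{\alpha}(x_k\lambda)$, where $\alpha = \alpha_1$ when $k+1$ is odd and $\alpha = \alpha_2$ when $k+1$ is even; in either case $\alpha = \alpha_{i_{k+1}}$ by the convention that $i_j = 1$ for $j$ odd and $i_j = 2$ for $j$ even (which holds both for $j \geq 1$ in $\iota^+$ and $j \leq 0$ in $\iota^-$). Hence
\begin{align}
x_k\lambda - x_{k+1}\lambda = x_k\lambda - s_{i_{k+1}}(x_k\lambda) = \pair{x_k\lambda}{\alc_{i_{k+1}}}\alpha_{i_{k+1}}.
\end{align}
It then remains to evaluate the pairing $\pair{x_k\lambda}{\alc_{i_{k+1}}}$ using \eqref{eq.xm}: if $k$ is even, then $x_k\lambda = p_{k+1}\Lambda_1 - p_k\Lambda_2$ and $i_{k+1}=1$, so the pairing equals $p_{k+1}$; if $k$ is odd, then $x_k\lambda = -p_k\Lambda_1 + p_{k+1}\Lambda_2$ and $i_{k+1}=2$, so the pairing is again $p_{k+1}$. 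This establishes $x_k\lambda - x_{k+1}\lambda = p_{k+1}\alpha_{i_{k+1}}$.

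For the inductive step with $l - k \geq 2$, I would telescope via
\begin{align}
x_k\lambda - x_l\lambda = (x_k\lambda - x_{l-1}\lambda) + (x_{l-1}\lambda - x_l\lambda),
\end{align}
apply the inductive hypothesis to the first summand and the adjacent case to the second, and combine to obtain $\sum_{j=k+1}^{l} p_j\alpha_{i_j}$. The only delicate point is the parity bookkeeping that identifies the label in the Hasse diagram with $\alpha_{i_{k+1}}$; otherwise the argument is entirely mechanical once \eqref{eq.xm} and Proposition \ref{prop.Hasse} are in hand.
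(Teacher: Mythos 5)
Your proof is correct and takes essentially the same route as the paper: induction with a telescoping step, reducing to the single-step identity $x_k\lambda-x_{k+1}\lambda=p_{k+1}\al_{i_{k+1}}$. The only (cosmetic) difference is that the paper reads this step off directly from \eqref{eq.xm} as $x_l\lambda=x_{l-1}\lambda-p_l\al_{i_l}$, whereas you derive it from the reflection $x_{k+1}\lambda=s_{i_{k+1}}(x_k\lambda)$ together with $\pair{x_k\lambda}{\alc_{i_{k+1}}}=p_{k+1}$, and your parity bookkeeping checks out.
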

\begin{proof}
	We proceed by induction on $l$; recall that  $l \geq k$.
	If $l=k$, then the assertion is obvious.
	Assume that $l > k$.
	By the induction hypothesis,
	we have $x_k\lambda-x_{l-1}\lambda=\sum_{j=k+1}^{l-1} p_j \al_{i_j}$.
	We see by \eqref{eq.xm} that
	$x_l\lambda=x_{l-1}\lambda - p_l\al_{i_l}$.
	Therefore, we obtain
	\begin{align}
		x_k\lambda-x_l\lambda =x_k\lambda-x_{l-1}\lambda + p_l\al_{i_l}
		=\sum_{j=k+1}^{l-1} p_{j} \al_{i_j} + p_l\al_{i_l}
		= \sum_{j=k+1}^l p_j \al_{i_j},
	\end{align}
	as desired.
\end{proof}

\begin{proposition}\label{prop.wt}
	Let $\pi \in \B_1(\lambda)$ be as \eqref{eq.pib1re}.
	Then,
	\begin{align}
		\wt{(\pi)}
		=\wt(\mor(\pi))
		=x_n\lambda-\sum_{j=n+1}^m q_j \al_{i_j}.
	\end{align}
\end{proposition}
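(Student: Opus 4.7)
The plan is to verify both equalities by direct computation: the left equality $\wt(\pi)=x_n\lambda-\sum_{j=n+1}^m q_j\alpha_{i_j}$ follows from unwinding the piecewise-linear definition of $\pi$ and telescoping via Lemma~\ref{lem.xkl}, while the right equality $\wt(\mor(\pi))=x_n\lambda-\sum_{j=n+1}^m q_j\alpha_{i_j}$ follows from the tensor-product weight formula on $\Z^{+\infty}_{\iota^+}\otimes\CT_\lambda\otimes\Z^{-\infty}_{\iota^-}$ combined with a short case analysis based on the signs of $n$ and $m$.

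First I would compute $\pi(1)$. By the piecewise-linear identification of $\pi$, writing $\sigma_0=0$, $\sigma_l=q_{m-l+1}/p_{m-l+1}$ for $l=1,\ldots,m-n$, and $\sigma_{m-n+1}=1$, one has
\begin{align}
\pi(1)=\sum_{l=1}^{m-n+1}(\sigma_l-\sigma_{l-1})\,x_{m-l+1}\lambda
=\sum_{k=n}^{m}(\sigma_{m-k+1}-\sigma_{m-k})\,x_k\lambda.
\end{align}
Applying Lemma~\ref{lem.xkl} to rewrite $x_k\lambda=x_n\lambda-\sum_{j=n+1}^{k}p_j\alpha_{i_j}$ and using $\sum_{k=n}^{m}(\sigma_{m-k+1}-\sigma_{m-k})=1$, the $x_n\lambda$ term comes out with coefficient $1$. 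Swapping the order of summation in the remaining double sum and using that $\sum_{k=j}^{m}(\sigma_{m-k+1}-\sigma_{m-k})=\sigma_{m-j+1}=q_j/p_j$ for $n+1\le j\le m$, the coefficient of $p_j\alpha_{i_j}$ becomes $q_j/p_j$, giving $\pi(1)=x_n\lambda-\sum_{j=n+1}^{m}q_j\alpha_{i_j}$.

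Next I would verify $\wt(\mor(\pi))=x_n\lambda-\sum_{j=n+1}^{m}q_j\alpha_{i_j}$. By the tensor-product formula reviewed in \S\ref{sec.poly}, $\wt(\mor(\pi))=\lambda-\sum_{k\in\Z}z_k(\pi)\alpha_{i_k}$, so it suffices to show
\begin{align}
\sum_{k\in\Z}z_k(\pi)\,\alpha_{i_k}=(\lambda-x_n\lambda)+\sum_{j=n+1}^{m}q_j\alpha_{i_j}.
\end{align}
I would split into the three cases $0\le n\le m$, $n<0\le m$, and $n\le m<0$, using Lemma~\ref{lem.xkl} in the form $\lambda-x_n\lambda=\sum_{j=1}^{n}p_j\alpha_{i_j}$ when $n\ge 0$ and $\lambda-x_n\lambda=-\sum_{j=n+1}^{0}p_j\alpha_{i_j}$ when $n<0$. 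In each case the definition \eqref{eq.mordef} of $z_k$ partitions the index set into intervals on which $z_k$ equals $p_k$, $q_k$, $q_k-p_k$, or $-p_k$, and the contributions telescope: the $p_k$ pieces on the positive side and the $-p_k$ pieces on the negative side recombine to $\lambda-x_n\lambda$, while the $q_k$ and $q_k-p_k$ pieces collect to $\sum_{j=n+1}^{m}q_j\alpha_{i_j}$.

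This is essentially a bookkeeping exercise, so there is no serious obstacle; the only point requiring care is keeping track of the correct index ranges for $z_k$ in the three cases (particularly the mixed case $n<0\le m$, where both a positive-side sum and a negative-side sum contribute simultaneously). Once the three cases are checked, the two expressions agree and the proposition follows.
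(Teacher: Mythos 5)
Your proposal is correct and follows essentially the same route as the paper: both equalities rest on Lemma \ref{lem.xkl} together with the weight formula on $\Z^{+\infty}_{\iota^+}\otimes\CT_\lambda\otimes\Z^{-\infty}_{\iota^-}$, and your three-case verification of $\wt(\mor(\pi))$ is exactly the paper's argument. The only (cosmetic) difference is in the first equality, where the paper proves $\wt(\pi)=x_n\lambda-\sum_{j=n+1}^m q_j\al_{i_j}$ by induction on $m$, peeling off the segment in direction $x_m\lambda$, while you obtain the same telescoping identity in closed form by computing $\pi(1)$ directly and interchanging the order of summation.
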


\begin{proof}
	First, we show by induction on $m$ that $\wt{(\pi)} =x_n\lambda-\sum_{j=n+1}^m q_j \al_{i_j}$; recall that $m \geq n$.
	If $m=n$, then $\wt{(\pi)} =x_n\lambda$ since $\pi=(x_n \lambda; 0, 1)$.
	Hence the  assertion is obvious.
	Assume that $m>n$.
	We see that
	\begin{align}
		\pi':= \bra{x_{m-1}\lambda, x_{m-2}\lambda, \ldots, x_{n}\lambda; 0, \dpq{m-1}, \dpq{m-2}, \ldots, \dpq{n+1}, 1 }
	\end{align}
	is also  an element of  $\B_1(\lambda)$.
	By the induction hypothesis, we obtain
	$\wt{(\pi')} =x_n\lambda-\sum_{j=n+1}^{m-1} q_j \al_{i_j}$.
	We see by the definition of $\wt$ that
	\begin{align}
		\wt(\pi)& =\wt(\pi')-\dpq{m-1}x_{m-1}\lambda+\dpq{m}x_{m}\lambda+\bra{\dpq{m-1}-\dpq{m}}x_{m-1}\lambda\\
		&=\wt(\pi')+\dpq{m}(x_m\lambda-x_{m-1}\lambda).
	\end{align}
	We see from Lemma \ref{lem.xkl} that $x_m\lambda-x_{m-1}\lambda=-p_m\al_{i_m}$.
	Therefore we deduce that
	$\wt{(\pi)} =x_n\lambda-\sum_{j=n+1}^{m-1} q_j \al_{i_j} +(\tpq{m})(-p_m\al_{i_m})
	=x_n\lambda-\sum_{j=n+1}^{m} q_j \al_{i_j} $.

	Next, we show that
	$\wt(\mor(\pi)) =x_n\lambda-\sum_{j=n+1}^m q_j \al_{i_j}$.
	By the definition of $\wt$,
	if $0\leq n \leq m $,
	then we have
	\begin{align}
		\wt(\mor(\pi))
		& =\lambda-\sum_{j=n+1}^m q_j\al_{i_j}-\sum_{j=1}^n p_j \al_{i_j};
	\end{align}
	if $n < 0 < m $, then we have
	\begin{align}
	\wt(\mor(\pi))
		=  \lambda-\sum_{j=1}^m q_j \al_{i_j} - \sum_{j=n+1}^0 (q_j-p_j) \al_{i_j}
		=  \lambda- \sum_{j=n+1}^m q_j\al_{i_j} + \sum_{j=n+1}^0 p_j \al_{i_j};
	\end{align}
		if $n \leq m \leq 0 $, then we have
	\begin{align}
			\wt(\mor(\pi))
			=  \lambda - \sum_{j=m+1}^0 (-p_j) \al_{i_j}- \sum_{j=n+1}^m (q_j-p_j)\al_{i_j}
			= \lambda+\sum_{j=n+1}^0 p_j\al_{i_j}-\sum_{j=n+1}^m q_j\al_{i_j}.
	\end{align}
	It follows from Lemma \ref{lem.xkl} that
	\begin{align}
		x_n\lambda=
		\begin{cases}
			\lambda-\sum_{j=1}^n p_j \al_{i_j}& \text{ if } n \geq 0,\\
			\lambda+\sum_{j=n+1}^0 p_j \al_{i_j} & \text{ if } n \leq 0.
		\end{cases}
	\end{align}
	Therefore we obtain $\wt(\mor(\pi)) =x_n\lambda-\sum_{j=n+1}^m q_j \al_{i_j}$
	for $n, m \in \Z$ such that $n\leq m$.
	Thus we have proved the proposition.
\end{proof}

\begin{lemma}\label{lem.sig}
	Let $\pi \in \B_1(\lambda)$ be as \eqref{eq.pib1re}.
	Then, for $k\in \Z$,
	\begin{align}
		\sigma_k(\mor(\pi)) =
		\begin{cases}
			0 & \text{\rm if } m+1 \leq k,\\
			q_{k}+\sum_{j=k+1}^m \pair{\al_{i_j}}{\alc_{i_k}}q_j & \text{\rm if } n+1 \leq k \leq m,\\
			-\pair{\wt(\mor(\pi))}{\alc_{i_k}} & \text{\rm if } k \leq n.
		\end{cases}
	\end{align}
\end{lemma}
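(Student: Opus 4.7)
The plan is to verify the lemma by direct computation, substituting the piecewise definition \eqref{eq.mordef} of $z_j(\pi)$ into the defining expressions
\begin{align*}
\sigma_k^+(y^+) = y_k + \sum_{j>k} \pair{\al_{i_j}}{\alc_{i_k}} y_j, \qquad \sigma_k^-(y^-) = -y_k - \sum_{j<k} \pair{\al_{i_j}}{\alc_{i_k}} y_j,
\end{align*}
together with the rule that $\sigma_k(\mor(\pi)) = \sigma_k^+(y^+)$ for $k \geq 1$ and $\sigma_k(\mor(\pi)) = \sigma_k^-(y^-) - \pair{\wt(\mor(\pi))}{\alc_{i_k}}$ for $k \leq 0$. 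Each of the three index ranges in the statement will be further split into the sub-cases $k \geq 1$ and $k \leq 0$; special care is required when $n < 0 < m$ because both regimes then appear.

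For each sub-case, the first step is to truncate the defining sum to the finitely many $j$ with $z_j \neq 0$ and then break it up according to the pieces of \eqref{eq.mordef}. Some sub-cases are immediate: when $k \geq m+1$ and $k \geq 1$, every $z_j$ with $j \geq k$ already vanishes, forcing $\sigma_k^+ = 0$; and when $k \leq n$ with $k \leq 0$, every $z_j$ with $j \leq k$ vanishes, forcing $\sigma_k^- = 0$ and hence $\sigma_k(\mor(\pi)) = -\pair{\wt(\mor(\pi))}{\alc_{i_k}}$ directly. The remaining sub-cases are those in which the support of $z$ crosses the boundary at $j=0$. In each of these, after collecting terms and using Proposition \ref{prop.wt} in the form $\pair{\wt(\mor(\pi))}{\alc_{i_k}} = \pair{x_n\lambda}{\alc_{i_k}} - \sum_{j=n+1}^m q_j \pair{\al_{i_j}}{\alc_{i_k}}$, the $q_j$-contributions line up automatically with the desired formula and what remains is a purely $p_j$-identity
\begin{align*}
\pair{x_n\lambda}{\alc_{i_k}} = -p_k - \sum_j p_j \pair{\al_{i_j}}{\alc_{i_k}},
\end{align*}
where the sum runs over the integers strictly between $\min(n,k)$ and $\max(n,k)$.

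This last identity is the heart of the computation, and it will be established by combining Lemma \ref{lem.xkl}, which expresses $x_n\lambda - x_k\lambda$ as a signed sum of the $p_j \al_{i_j}$, with the equality $\pair{x_k\lambda}{\alc_{i_k}} = -p_k$, read off from \eqref{eq.xm} after noting that under our choice of $\iota$ one has $i_k = 1$ when $k$ is odd and $i_k = 2$ when $k$ is even, regardless of the sign of $k$. The main obstacle is not conceptual but organizational: one must keep consistent track of signs and of the index ranges across six slightly different sub-cases and, in particular, avoid conflating the $\sigma_k^+$- and $\sigma_k^-$-regimes when $n < 0 < m$. Once the bookkeeping is set up cleanly, every sub-case collapses to the single $p$-identity above.
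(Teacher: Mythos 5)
Your route is the same as the paper's: compute $\sigma_k^{\pm}$ directly from \eqref{eq.mordef}, dispose of the trivial ranges first, and for the remaining $k$ use Proposition \ref{prop.wt} so that the $q_j$-terms match and only a $p_j$-identity coming from Lemma \ref{lem.xkl} and $\pair{x_k\lambda}{\alc_{i_k}}=-p_k$ remains; the paper organizes this as the three cases $0\le n\le m$, $n<0<m$, $n\le m\le 0$, but the content is the same computation.

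The one genuine problem is that the single $p$-identity you claim every sub-case collapses to is false as stated, and the two regimes do not reduce to one formula. For $1\le k\le n$ the identity actually needed is
\begin{align}
\pair{x_n\lambda}{\alc_{i_k}}=-p_k-\sum_{j=k+1}^{n}\pair{\al_{i_j}}{\alc_{i_k}}p_j,
\end{align}
where the sum must include the endpoint $j=n$, not run strictly between $k$ and $n$. For $n<k\le 0$ (both the range $n+1\le k\le m$ and the range $m+1\le k\le 0$ when $m\le 0$) the identity needed is
\begin{align}
\pair{x_n\lambda}{\alc_{i_k}}=p_k+\sum_{j=n+1}^{k-1}\pair{\al_{i_j}}{\alc_{i_k}}p_j,
\end{align}
with the \emph{opposite} sign on $p_k$: the endpoint term $j=k$ in Lemma \ref{lem.xkl} contributes $\pair{\al_{i_k}}{\alc_{i_k}}p_k=2p_k$, which combines with $\pair{x_k\lambda}{\alc_{i_k}}=-p_k$ to give $+p_k$ (this is exactly \eqref{eq.122005} in the paper). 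The analogous $2q_k$ coming from the weight correction is also what converts the diagonal term $-z_k=-(q_k-p_k)$ into the $q_k$ of the target formula, so the $q_j$-contributions line up only after the same endpoint bookkeeping, not ``automatically.'' If you substitute your identity with the sign $-p_k$ in the regime $n<k$, the leftover term is $2\bigl(p_k+\sum_{j=n+1}^{k-1}\pair{\al_{i_j}}{\alc_{i_k}}p_j\bigr)\neq 0$ and the computation does not close. Once the two identities are stated separately with the correct endpoints and signs, your argument becomes exactly the paper's proof.
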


\begin{proof}
	First, we assume that $0\leq n \leq m $.
	We write
	$\mor(\pi)= b_1 \otimes t_{\lambda} \otimes (0,0, \ldots)$
	with
	$b_1= (\ldots, 0, q_m, \ldots, q_{n+2}, q_{n+1}, p_n, \ldots, p_2, p_1)$,
	where we understand
	$b_1= (\ldots, 0, p_m, \ldots, p_{2}, p_{1})$
	\resp{$(\ldots, 0, q_m, \ldots, q_{2}, q_{1})$}
	if $0 < n = m$ \resp{$n = 0 < m$}.
	If $n+1 \leq k$, then we have
		$\sigma_k(\mor(\pi))=\sigma_k^+(b_1)$.
	Hence the assertion is obvious by the definition of $\sigma_k^+$.
	Assume that  $1 \leq k \leq n$.
	By Proposition \ref{prop.wt},
	it suffices to show that
	$\sigma_k(\mor(\pi))=-\pair{x_n\lambda}{\alc_{i_k}}
	+\sum_{j=n+1}^m \pair{\al_{i_j}}{\alc_{i_k}}q_j$.
	We see by the definition of $\sigma_k$ that
	\begin{align}\label{eq.122001}
		\sigma_k(\mor(\pi))=
		\sigma_k^+(b_1) = p_{k}+ \sum_{j=k+1}^n \pair{\al_{i_j}}{\alc_{i_k}}p_j
		+\sum_{j=n+1}^m \pair{\al_{i_j}}{\alc_{i_k}}q_j.
	\end{align}
	It follows from Lemma \ref{lem.xkl} that
	$x_k\lambda-x_n\lambda=\sum_{j=k+1}^n p_{j}\al_{i_j}$.
	Therefore,
	\begin{align}\label{eq.122002}
		-\pair{x_n\lambda}{\alc_{i_k}}
		=-\pair{x_k\lambda}{\alc_{i_k}} + \sum_{j=k+1}^n \pair{\al_{i_j}}{\alc_{i_k}}p_j
		=p_k+\sum_{j=k+1}^n \pair{\al_{i_j}}{\alc_{i_k}}p_j
	\end{align}
	since $\pair{x_k\lambda}{\alc_{i_k}}=-p_k$ by \eqref{eq.xm}.
	Combining \eqref{eq.122001} and \eqref{eq.122002},
	we obtain the desired equality.
	If $k \leq 0$, then we have
	$\sigma_k(\mor(\pi))=\sigma_k^-((0,0,\ldots))-\pair{\wt(\mor(\pi))}{\alc_{i_k}} =-\pair{\wt(\mor(\pi))}{\alc_{i_k}}$.

	Next, we assume that $n < 0 < m$.
	We write
	$\mor(\pi)= b_1 \otimes t_{\lambda} \otimes b_2$
	with
	$b_1= (\ldots, 0, q_m, \ldots, q_{2}, q_{1})$ and
	$b_2= (q_0-p_0, q_{-1}-p_{-1}, \ldots,  q_{n+1}-p_{n+1}, 0, \ldots)$.
	If $1 \leq k$, then $\sigma_k(\mor(\pi))=\sigma_k^+(b_1)$.
	Hence the assertion is obvious
	by the definition of $\sigma_k^+$.
	Assume that  $n+1 \leq k \leq 0$.
	We see by the definition of $\sigma_k^-$ that
	\begin{align}
		\sigma_k^-(b_2)
		&=-(q_{k}-p_k)- \sum_{j=n+1}^{k-1} \pair{\al_{i_j}}{\alc_{i_k}}(q_j-p_j)\\
		&=-q_k+ p_k
		-\sum_{j=n+1}^{k-1} \pair{\al_{i_j}}{\alc_{i_k}}q_j
		+\sum_{j=n+1}^{k-1} \pair{\al_{i_j}}{\alc_{i_k}}p_j.\label{eq.lemsigtest4}
	\end{align}
	By Proposition \ref{prop.wt}, we have
	$\pair{\wt(\mor(\pi))}{\alc_{i_k}}
			=\pair{x_n\lambda}{\alc_{i_k}}-\sum_{j=n+1}^m \pair{\al_{i_j}}{\alc_{i_k}}q_j$.
	Hence,
	\begin{align}
		\sigma_k(\mor(\pi))
		&= \sigma_k^-(b_2)-\pair{\wt(\mor(\pi))}{\alc_{i_k}}\\
		&=-q_k
		+\sum_{j=k}^{m} \pair{\al_{i_j}}{\alc_{i_k}}q_j
		+ p_k
		+\sum_{j=n+1}^{k-1} \pair{\al_{i_j}}{\alc_{i_k}}p_j
		-\pair{x_n\lambda}{\alc_{i_k}}.\label{eq.122003}
	\end{align}
	Because $ \pair{\al_{i_k}}{\alc_{i_k}}=2$, we obtain
	\begin{align}
		-q_k
		+\sum_{j=k}^{m} \pair{\al_{i_j}}{\alc_{i_k}}q_j
		&=-q_k
		+\pair{\al_{i_k}}{\alc_{i_k}}q_k
		+\sum_{j=k+1}^{m} \pair{\al_{i_j}}{\alc_{i_k}}q_j\\
		&=q_k
		+\sum_{j=k+1}^{m} \pair{\al_{i_j}}{\alc_{i_k}}q_j.\label{eq.122004}
	\end{align}
	It follows from Lemma \ref{lem.xkl} that
	$-x_n\lambda +x_k\lambda +\sum_{j=n+1}^k p_j\al_{i_j}=0$,
	and hence,
	\begin{align}
		0
		&=-\pair{x_n\lambda}{\alc_{i_k}}
		+\pair{x_k\lambda}{\alc_{i_k}}
		+ \sum_{j=n+1}^k \pair{\al_{i_j}}{\alc_{i_k}}p_j\\
		&=-\pair{x_n\lambda}{\alc_{i_k}}
		+p_k
		+ \sum_{j=n+1}^{k-1} \pair{\al_{i_j}}{\alc_{i_k}}p_j\label{eq.122005}
	\end{align}
	since $\pair{x_k\lambda}{\alc_{i_k}}=-p_k$
	and $\pair{\al_{i_k}}{\alc_{i_k}}=2$.
	By \eqref{eq.122003}--\noeqref{eq.122004}\eqref{eq.122005}, we obtain
	$\sigma_k(\mor(\pi))
	 =q_{k}+\sum_{j=k+1}^{m} \pair{\al_{i_j}}{\alc_{i_k}}q_j$,
	as desired.
	If $k \leq n$, then  $\sigma_k^-(b_2)=0$, which implies that
	$\sigma_k(\mor(\pi))=\sigma_k^-(b_2)-\pair{\wt(\mor(\pi))}{\alc_{i_k}} =-\pair{\wt(\mor(\pi))}{\alc_{i_k}}$.

	Finally, we assume that $n \leq m \leq 0$.
	We write
	$\mor(\pi)= (\ldots, 0, 0) \otimes t_{\lambda} \otimes b_2$
	with
		$b_2= (-p_0, -p_{-1}, \ldots,  -p_{m+1}, q_{m}-p_{m}, \ldots, q_{n+1}-p_{n+1}, 0, \ldots)$,
	where we understand
	$b_2= (q_0-p_0, q_{-1}-p_{-1}, \ldots,  q_{n+1}-p_{n+1}, 0, \ldots)$
	\resp{$b_2= (-p_0, -p_{-1}, \ldots,  -p_{n+1}, 0, \ldots)$}
	if $n < m = 0$ \resp{$n = m < 0$}.
	If $1 \leq k$, then it is obvious that  $\sigma_k(\mor(\pi))=\sigma_k^+(b_1)=0$.
	Assume that $m+1 \leq k \leq 0$.
	We see that
	\begin{align}
		\sigma_k^-(b_2)
		&=-(-p_k) - \sum_{j=m+1}^{k-1} \pair{\al_{i_j}}{\alc_{i_k}}(-p_j)
		- \sum_{j=n+1}^{m} \pair{\al_{i_j}}{\alc_{i_k}}(q_j-p_j)\\
		&= p_k +\sum_{j=n+1}^{k-1} \pair{\al_{i_j}}{\alc_{i_k}}p_j
		-\sum_{j=n+1}^{m} \pair{\al_{i_j}}{\alc_{i_k}}q_j.\label{eq.lemsigtest1}
	\end{align}
	By Proposition \ref{prop.wt}, we have
	$\pair{\wt(\mor(\pi))}{\alc_{i_k}}
			=\pair{x_n\lambda}{\alc_{i_k}}-\sum_{j=n+1}^m \pair{\al_{i_j}}{\alc_{i_k}}q_j$.
	Hence,
	\begin{align}
		\sigma_k(\mor(\pi))
		= \sigma_k^-(b_2)-\pair{\wt(\mor(\pi))}{\alc_{i_k}}
		= p_k
		+\sum_{j=n+1}^{k-1} \pair{\al_{i_j}}{\alc_{i_k}}p_j
		-\pair{x_n\lambda}{\alc_{i_k}}.\label{eq.122007}
	\end{align}
	It follows from Lemma \ref{lem.xkl} that
	$-x_n\lambda +x_k\lambda +\sum_{j=n+1}^k p_j\al_{i_j}=0$,
	and hence,
	\begin{align}
		0
		&=-\pair{x_n\lambda}{\alc_{i_k}}
		+\pair{x_k\lambda}{\alc_{i_k}}
		+ \sum_{j=n+1}^k \pair{\al_{i_j}}{\alc_{i_k}}p_j\\
		&=-\pair{x_n\lambda}{\alc_{i_k}}
		+p_k
		+ \sum_{j=n+1}^{k-1} \pair{\al_{i_j}}{\alc_{i_k}}p_j\label{eq.122008}
	\end{align}
	since $\pair{x_k\lambda}{\alc_{i_k}}=-p_k$
	 and $\pair{\al_{i_k}}{\alc_{i_k}}=2$.
	By \eqref{eq.122007} and  \eqref{eq.122008},
	we obtain
	$\sigma_k(\mor(\pi))=\sigma_k^-(b_2)-\pair{\wt(\mor(\pi))}{\alc_{i_k}}=0$, as desired.
	If $ k \leq m $,
	then we can show the equality
	by the same argument as in the case that  $n < 0 < m $.
	Thus we have proved the lemma.
\end{proof}

Now, we set
\begin{align}
	i(k):=i_k=
	\begin{cases}
		2 & \text{ if } k \text{ is even},\\
		1 & \text{ if } k \text{ is odd},
	\end{cases}
	\quad
	i'(k):=
	\begin{cases}
		1 & \text{ if } k \text{ is even},\\
		2 & \text{ if } k \text{ is odd},
	\end{cases}
\end{align}
for $k \in\Z$;
note that
\begin{align}\label{eq.idir}
	\pair{x_k\lambda}{\alc_{i(k)}}=-p_k <0, \quad
	\pair{x_k\lambda}{\alc_{i'(k)}}=p_{k+1} >0
\end{align}
by \eqref{eq.xm}.
Let us write $\pi \in \B_1(\lambda)$ as \eqref{eq.pib1re}.
We see by \eqref{eq.idir} that
\begin{align}\label{eq.minval1}
	H^\pi_{i(m)}\bra{\dpq{m}}<0=H^\pi_{i(m)}(0).
\end{align}
Moreover,
if $m+n$ is odd,  then
we see that $\pair{x_n\lambda}{\alc_{i(m)}}>0$,
and hence
\begin{align}
	H^\pi_{i(m)}\bra{\dpq{n+1}}<\pair{\wt(\pi)}{\alc_{i(m)}}=H^\pi_{i(m)}(1)\label{eq.minval2}.
\end{align}
If $m+n$ is even,
then we see that
$\pair{x_n\lambda}{\alc_{i'(m)}}>0$,
and hence
\begin{align}
 H^\pi_{i'(m)}\bra{\dpq{n+1}}<\pair{\wt(\pi)}{\alc_{i'(m)}}=H^\pi_{i'(m)}(1).\label{eq.minval3}
\end{align}


\begin{lemma}\label{lem.minval}
	Let $\pi \in \B_1(\lambda)$ be as \eqref{eq.pib1re}.
	If  $n+1 \leq k \leq m$, then
	\begin{align}
		-q_k+\sum_{j=k+1}^m\pair{\al_{i_j}}{\alc_{i_k}}q_j=
		\begin{cases}
			H^\pi_{i(m)}(\tpq{k}) & \text{\rm if } k-m \in 2 \Z,\\
			H^\pi_{i'(m)}(\tpq{k})  & \text{\rm if } k-m+1 \in 2 \Z.
		\end{cases}
	\end{align}
\end{lemma}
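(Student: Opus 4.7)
The plan is to compute $H^\pi_{i_k}(\tpq{k}) = \pair{\pi(\tpq{k})}{\alc_{i_k}}$ directly from the piecewise-linear definition of $\pi$, and then observe that both cases of the right-hand side coincide with this single quantity. Indeed, $i(k) = i_k$ agrees with $i(m)$ exactly when $k - m$ is even and with $i'(m)$ exactly when $k - m$ is odd, so the two branches in the lemma are just the parity-based bookkeeping for this identification.

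The first concrete step is to recognize $\tpq{k}$ as a breakpoint of $\pi$. Writing $\pi$ as in \eqref{eq.pib1re} and setting $\sigma_0 = 0$, $\sigma_v = q_{m-v+1}/p_{m-v+1}$ for $1 \leq v \leq m-n$, and $\sigma_{m-n+1} = 1$, we have $\tpq{k} = \sigma_{m-k+1}$. From the piecewise-linear formula for $\pi$,
\[
\pi(\tpq{k}) = \sum_{l=1}^{m-k+1}(\sigma_l - \sigma_{l-1})\, x_{m-l+1}\lambda,
\]
and after the reindexing $j = m - l + 1$ this becomes $\sum_{j=k}^m (\sigma_{m-j+1} - \sigma_{m-j})\, x_j\lambda$.

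Next I would rewrite each $x_j\lambda$ using Lemma \ref{lem.xkl} in the form $x_j\lambda = x_k\lambda - \sum_{l=k+1}^j p_l \al_{i_l}$, and then swap the order of the two summations. The coefficient of $x_k\lambda$ telescopes to $\sigma_{m-k+1} - \sigma_0 = \tpq{k}$, while for each $l$ with $k+1 \leq l \leq m$ the coefficient of $p_l \al_{i_l}$ telescopes to $-\sigma_{m-l+1} = -\tpq{l}$, which combines with $p_l$ to give $-q_l$. This yields the clean identity
\[
\pi(\tpq{k}) = \tpq{k}\, x_k\lambda - \sum_{l=k+1}^m q_l\, \al_{i_l}.
\]

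Finally, pair both sides with $\alc_{i_k}$. By \eqref{eq.idir} we have $\pair{x_k\lambda}{\alc_{i_k}} = -p_k$, so the first term contributes $\tpq{k}(-p_k) = -q_k$, and the resulting formula matches the right-hand side of the lemma after the parity-based identification of $i_k$ with $i(m)$ or $i'(m)$. The only step requiring care is the double-summation rearrangement, and the two nested telescopings make it routine; I do not anticipate any substantive obstacle.
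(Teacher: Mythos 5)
Your argument takes a genuinely different (and in itself cleaner) route than the paper's: the paper evaluates $H^\pi_{i(m)}(\tpq{k})$, resp.\ $H^\pi_{i'(m)}(\tpq{k})$, as $\sum_{j=k}^m(\tpq{j}-\tpq{j+1})\pair{x_j\lambda}{\alc_{i(m)}}$, groups the segments in pairs and invokes the recursions \eqref{eq.pm}, \eqref{eq.pm2}, with a separate computation for each parity, whereas you first prove the closed formula $\pi(\tpq{k})=\tpq{k}\,x_k\lambda-\sum_{l=k+1}^m q_l\al_{i_l}$ via Lemma \ref{lem.xkl} and two telescopings and only then pair with $\alc_{i_k}$. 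The reindexing and both telescopings are correct, and your route avoids the case split and the $p$-recursion entirely.

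The problem is your last sentence. Pairing your identity with $\alc_{i_k}$ gives
\begin{align}
	H^\pi_{i_k}\bra{\dpq{k}}=-q_k-\sum_{j=k+1}^m\pair{\al_{i_j}}{\alc_{i_k}}q_j,
\end{align}
with a \emph{minus} sign in front of the summation, whereas the displayed statement has $-q_k+\sum_{j=k+1}^m\pair{\al_{i_j}}{\alc_{i_k}}q_j$; these agree only when the sum vanishes, so your formula does not ``match the right-hand side of the lemma'' and you should not assert that it does. In fact your sign is the correct one: for $k=m-1$ a direct computation using $p_{m+1}+p_{m-1}=a_{i'(m)}p_m$ gives $H^\pi_{i'(m)}(\tpq{m-1})=-q_{m-1}+a_{i'(m)}q_m=-q_{m-1}-\pair{\al_{i_m}}{\alc_{i_{m-1}}}q_m$, not $-q_{m-1}+\pair{\al_{i_m}}{\alc_{i_{m-1}}}q_m$; and only your version is compatible with Lemma \ref{lem.sig} and Proposition \ref{prop.sigminval}, which require $H^\pi(\tpq{k})=-\sigma_k(\mor(\pi))=-\bigl(q_k+\sum_{j=k+1}^m\pair{\al_{i_j}}{\alc_{i_k}}q_j\bigr)$. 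The printed statement carries a sign typo, and the same slip occurs in the very last line of the paper's own proof, where the (correct) penultimate expression $-q_k+\sum_{j}(a_{i(m)}q_{j+1}-2q_{j+2})$ is rewritten with the wrong sign in terms of $\pair{\al_{i_j}}{\alc_{i(m)}}$. So your computation in fact establishes the corrected identity, but as a proof of the literal statement it ends with a false identification: you need to flag the discrepancy and state the identity you actually proved, rather than declaring a match.
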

\begin{proof}
	We set $q_{m+1}:=0$ and $q_{n}:=p_n$ by convention.
	Assume that $k-m \in 2 \Z$. Then we obtain
	\begin{align}
		H^\pi_{i(m)}\bra{\dpq{k}}
		&=\sum_{j=k}^m
		\bra{\dpq{j}-\dpq{j+1}}\pair{x_{j}\lambda}{\alc_{i(m)}}\\
		&=\sum_{j=k, k+2, \ldots, m-2}
		\bra{
			\bra{\dpq{j}-\dpq{j+1}}(-p_j)
			+\bra{\dpq{j+1}-\dpq{j+2}}p_{j+2}
		}
		+\dpq{m}(-p_m)\\
		&=\sum_{j=k, k+2, \ldots, m-2} (-q_{j}+a_{i(m)}q_{j+1}-q_{j+2})-q_m
			\qquad \text{ by } \eqref{eq.pm} \text{ and } \eqref{eq.pm2}\\
		&=-q_k +  \sum_{j=k, k+2, \ldots, m-2} (a_{i(m)}q_{j+1}-2q_{j+2})\\
		&=-q_k+\sum_{j=k+1}^m\pair{\al_{i_j}}{\alc_{i(m)}}q_j,
	\end{align}
	as desired.

	Assume that $k-m+ 1 \in 2 \Z$. Then we obtain
	\begin{align}
		H^\pi_{i'(m)}\bra{\dpq{k}}
		&=\sum_{j=k}^m
		\bra{\dpq{j}-\dpq{j+1}}\pair{x_{j}\lambda}{\alc_{i'(m)}}\\
		&=\sum_{j=k, k+2, \ldots, m-1}
		\bra{
			\bra{\dpq{j}-\dpq{j+1}}(-p_j)+\bra{\dpq{j+1}-\dpq{j+2}}p_{j+2}
		}\\
		&=\sum_{j=k, k+2, \ldots, m-1} (-q_{j}+a_{i'(m)}q_{j+1}-q_{j+2})
			\qquad \text{ by } \eqref{eq.pm} \text{ and } \eqref{eq.pm2}\\
		&=-q_k+\sum_{j=k, k+2, \ldots, m-3} (a_{i'(m)}q_{j+1}-2q_{j+2})+a_{i'(m)}q_m-q_{m+1}\\
		&=-q_k+\sum_{j=k+1}^m\pair{\al_{i_j}}{\alc_{i'(m)}}q_j,
	\end{align}
		as desired.
\end{proof}
By Lemmas \ref{lem.sig} and \ref{lem.minval},
we obtain the following proposition.
\begin{proposition}\label{prop.sigminval}
	Let $\pi \in \B_1(\lambda)$ be as \eqref{eq.pib1re}. Then,
	\begin{align}\label{eq.sigma}
		-\sigma_k(\mor(\pi)) =
		\begin{cases}
			H^\pi_{i(m)}(0) & \text{\rm if }  k-m \in 2\Z \text{\rm{ and }}  m+1 \leq k,\\
			H^\pi_{i(m)}(\tpq{k}) & \text{\rm if } k-m \in 2 \Z \text{\rm{ and }} n+1 \leq k \leq m,\\
			H^\pi_{i(m)}(1) & \text{\rm if } k-m \in 2 \Z \text{\rm{ and }} k \leq n,\\
			H^\pi_{i'(m)}(0) & \text{\rm if } k-m+1 \in 2 \Z \text{\rm{ and }} k \leq m+1,\\
			H^\pi_{i'(m)}(\tpq{k}) & \text{\rm if } k-m+1 \in 2 \Z \text{\rm{ and }} n+1 \leq k \leq m,\\
			H^\pi_{i'(m)}(1) & \text{\rm if } k-m+1 \in 2 \Z \text{\rm{ and }} k \leq n.
		\end{cases}
	\end{align}
\end{proposition}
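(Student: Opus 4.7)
The plan is to split the index $k \in \Z$ into six pieces according to the parity of $k-m$ and the location of $k$ relative to the interval $[n+1, m]$, and in each piece to read off the desired identity directly from Lemma \ref{lem.sig} (giving $\sigma_k(\mor(\pi))$), combined when necessary with Lemma \ref{lem.minval} and Proposition \ref{prop.wt}. The key observation that glues everything together is that, since $\iota^+$ and $\iota^-$ are the strictly alternating sequences $(\ldots, 2, 1, 2, 1)$ and $(2, 1, 2, 1, \ldots)$, we have $i_k = i(k)$, so $i_k = i(m)$ precisely when $k-m$ is even and $i_k = i'(m)$ precisely when $k-m$ is odd.

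First I would handle the interior range $n+1 \leq k \leq m$. Lemma \ref{lem.sig} gives $\sigma_k(\mor(\pi)) = q_k + \sum_{j=k+1}^m \pair{\al_{i_j}}{\alc_{i_k}} q_j$, and Lemma \ref{lem.minval} identifies (after tracking signs) $-\sigma_k(\mor(\pi))$ with $H^\pi_{i(m)}(\tpq{k})$ when $k-m$ is even and with $H^\pi_{i'(m)}(\tpq{k})$ when $k-m$ is odd; this gives the two interior cases of the proposition. Next I would treat the large outer range $k \geq m+1$: here Lemma \ref{lem.sig} immediately gives $\sigma_k(\mor(\pi)) = 0$, and since $\pi(0) = 0$ by the LS-path convention, both $H^\pi_{i(m)}(0)$ and $H^\pi_{i'(m)}(0)$ vanish, so both parity subcases reduce to the trivial identity $0 = 0$. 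Finally I would handle the small outer range $k \leq n$: Lemma \ref{lem.sig} gives $\sigma_k(\mor(\pi)) = -\pair{\wt(\mor(\pi))}{\alc_{i_k}}$, while Proposition \ref{prop.wt} lets us replace $\wt(\mor(\pi))$ by $\wt(\pi) = \pi(1)$, so $-\sigma_k(\mor(\pi)) = \pair{\pi(1)}{\alc_{i_k}} = H^\pi_{i_k}(1)$; the parity of $k-m$ then selects $i(m)$ or $i'(m)$ as desired.

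The main obstacle, such as it is, is purely bookkeeping: one must correctly align the index $i_k$ appearing in Lemmas \ref{lem.sig} and \ref{lem.minval} with the indices $i(m)$ and $i'(m)$ appearing in the statement, and one must consistently track the signs arising from the definition of $\sigma_k^+$ on the polyhedral side against the signs arising from the piecewise linear function $H^\pi_i$ on the LS-path side. No essentially new computation is required beyond what is already encapsulated in Lemmas \ref{lem.sig} and \ref{lem.minval} together with Proposition \ref{prop.wt}.
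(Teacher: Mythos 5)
Your argument coincides with the paper's: Proposition \ref{prop.sigminval} is presented there as an immediate consequence of Lemmas \ref{lem.sig} and \ref{lem.minval}, and your handling of the outer ranges (using $\pi(0)=0$ so that both sides vanish for $k\geq m+1$, and combining Lemma \ref{lem.sig} with Proposition \ref{prop.wt} to get $-\sigma_k(\mor(\pi))=\pair{\pi(1)}{\alc_{i_k}}=H^{\pi}_{i_k}(1)$ for $k\leq n$, with the parity of $k-m$ selecting $i(m)$ or $i'(m)$) is precisely the routine verification the paper leaves implicit. So the proposal is correct and follows essentially the same route.
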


\begin{proof}[\it{Proof of Theorem \ref{thm.mor}}]
	By Remark \ref{rem.mor}, it is easy to check that the map $\mor$ is injective.
	We show that $\mor$ is a morphism of crystals.
	Let $\pi \in \B_1(\lambda)$.
	We have $\wt(\pi)=\wt(\mor(\pi))$ by Proposition \ref{prop.wt}.
	We show that
	$\ep_i(\pi)=\ep_i(\mor(\pi))$ and 	$\mor(\e_i\pi)=\e_i\mor(\pi)$ for $i \in I$.
	Let us write $\pi$ as \eqref{eq.pib1re}.

	{\bf{Case 1}}. Assume that $i=i(m)$ and $m+n$ is odd.
	Note that the function $H_{i}^\pi(t)$ attains a minimal value
	at $t=\tpq{k}$, $k=m ,m-2, \ldots,n+1$ (see Remark \ref{rem.alt} and \eqref{eq.idir}).
	By \eqref{eq.LSepph} we have
	\begin{align}
		\ep_{i}(\pi)
		&=-\min
			\left\{H_{i}^\pi(t) \, \middle| \,
				t \in \left\{
					\dpq{m}, \dpq{m-2}, \ldots,  \dpq{n+1}
				\right\}
			\right\}\\
			&=\max
				\left\{-H_{i}^\pi(t) \, \middle| \,
					t \in \left\{
						\dpq{m}, \dpq{m-2}, \ldots,  \dpq{n+1}
					\right\}
				\right\}.\label{eq.mor1}
	\end{align}
	By the definition of $\ep_{i}(\mor(\pi))$, we have
	\begin{align}\label{eq.mor3}
		\ep_{i}(\mor(\pi))
		 =\max_{k:  i_k=i}\sigma_k(\mor(\pi))
		=\max_{k-m \in 2\Z}\sigma_k(\mor(\pi)).
	\end{align}
	We see from  Proposition \ref{prop.sigminval} that
	\begin{align}
		\max_{k-m \in 2\Z}\sigma_k(\mor(\pi))
		&=\max
			\left\{-H_{i}^\pi(t) \, \middle| \,
				t \in \left\{
					0, \dpq{m}, \dpq{m-2}, \ldots,  \dpq{n+1}, 1
				\right\}
			\right\}\\
			&=\max
				\left\{-H_{i}^\pi(t) \, \middle| \,
					t \in \left\{
						\dpq{m}, \dpq{m-2}, \ldots,  \dpq{n+1}
					\right\}
				\right\},\label{eq.mor2}
	\end{align}
	where the second equality follows from \eqref{eq.minval1} and \eqref{eq.minval2}.
	By \eqref{eq.mor1}--\eqref{eq.mor2},
	\noeqref{eq.mor3}
	we obtain $\ep_{i}(\pi)=\ep_{i}(\mor(\pi))$, as desired.
	Next, we show that  $\mor(\e_i\pi)=\e_i\mor(\pi)$.
	Since both $\B_1(\lambda)$ and $\img(\Psi^+_{\iota^+}) \otimes \CT_\lambda \otimes \img(\Psi^-_{\iota^-})$ are normal crystals in the sense of \cite{K},
	the equality $\ep_i(\pi)=\ep_i(\mor(\pi))$ and the injectivity  of $\mor$ imply that
	\begin{align}
		\mor(\e_i\pi)=\0 \iff \e_i\pi=\0
		\iff \ep_i(\pi)=0
		\iff \ep_i(\mor(\pi))=0
		\iff \e_i\mor(\pi)=\0.
	\end{align}
	Assume that $\e_i\pi\neq \0$, or equivalently, $\e_i\mor(\pi) \neq \0$.
	By the definition of $\mor$,
	we have
	 $\mor(\pi)=(\ldots, y_2, y_1) \otimes t_\lambda \otimes (y_0,y_{-1}, \ldots)$,
	where  $y_k=z_k(\pi)$ (see \eqref{eq.mordef}).
	Let $M_{(i)}$ be as \eqref{eq.mi},
	and set $k' :=\max M_{(i)}$.
	Namely, $k'$ is
 	the largest integer $k$ such that
	$\sigma_{(i)}(\mor(\pi))=\sigma_k(\mor(\pi))$ and $k-m \in 2\Z$.
	Then we see by the definition of $\e_i$
 that $\e_i\mor(\pi) =(\ldots, y'_2, y'_1) \otimes t_\lambda \otimes (y'_0,y'_{-1}, \ldots)$, where $y'_k:=y_k - \delta_{k, k'}$.
	Let $t_1$ and $t_0$ be as \eqref{eq.et1} and \eqref{eq.et0}, respectively.
	By \eqref{eq.mor1}--\eqref{eq.mor2}, we obtain
	$t_1=\tpq{k'}$.
	By \eqref{eq.estrict} and Remark \ref{rem.alt},
	we have $t_0= t_1-1/(-\pair{x_{k'}\lambda}{\alc_{i}})=(q_{k'}-1)/p_{k'}$.
	Assume that  $k'<m$.
	By \eqref{eq.estrict} and Remark \ref{rem.alt},
	we have $ \tpq{k'+1} \leq t_0$.
	Suppose, for a contradiction, that  $\tpq{k'+1}= t_0$, that is,
	\begin{align}\label{eq.case1contra}
		H_i^\pi\bra{\dpq{k'+1}}=H_i^\pi\bra{\dpq{k'}}+1.
	\end{align}
	Then it follows from   Remark \ref{rem.alt}
	that $H_i^\pi(t)$ attains a minimal value at $t=\tpq{k'+2}$,
	and hence $H_i^\pi(\tpq{k'+2}) \in \Z$ by \eqref{eq.int}.
	By \eqref{eq.case1contra}, we obtain $H_i^\pi(\tpq{k'+2}) \leq H_i^\pi(\tpq{k'})$,
	which contradicts the definition of $t_1$.
	Therefore we obtain $ \tpq{k'+1} <t_0$ and
	\begin{align}
		\e_{i}\pi=\bra{x_m\lambda, \ldots, x_{k'}\lambda \ldots, x_n\lambda; 0, \dpq{m} \ldots, \dpq{k'+1}, \frac{q_{k'}-1}{q_{k'}}, \dpq{k'-1}, \ldots, \dpq{n+1}, 1}.
	\end{align}
	If $k'=m$, then
	\begin{align}
		\e_{i}\pi=
		\begin{dcases}
			\bra{ x_m\lambda, \ldots, x_n\lambda; 0, \frac{q_{m}-1}{q_{m}},  \dpq{m-1}, \ldots, \dpq{n+1}, 1} & \text{ if } q_m >1,   \\
		\bra{ x_{m-1}\lambda, \ldots, x_n\lambda; 0,  \dpq{m-1}, \ldots, \dpq{n+1}, 1} & \text{ if } q_m=1.
		\end{dcases}
	\end{align}
	Hence we see that
	\begin{align}
		z_k(\e_i\pi)
		&=
		\begin{cases}
			q_{k'}-1 & \text{ if } 1 \leq k=k', \\
			(q_{k'}-1)-p_{k'} & \text{ if } k=k'  \leq 0,\\
			q_k & \text{ if } k\neq k', \, 1 \leq k, \text{ and } n+1 \leq k \leq m,\\
			p_k & \text{ if } k\neq k', \, 1 \leq k, \text{ and } k \leq n,\\
			q_k-p_k & \text{ if } k\neq k', \, k \leq 0, \text{ and } n+1 \leq k \leq m,\\
			-p_k & \text{ if } k\neq k', \,  k \leq 0, \text{ and }  m+1 \leq k,\\
			0 & \text{ otherwise},
		\end{cases}\\
		&= z_k(\pi)-\delta_{k, k'},
	\end{align}
	which implies that  $\mor(\e_{i}\pi)=\e_{i}\mor(\pi)$.


	{\bf{Case 2}}. Assume that $i=i'(m)$ and $m+n$ is even.
	Note that the function $H_{i}^\pi(t)$ attains a minimal value
	at $t=0$ and $t=\tpq{k}$, $k=m-1, m-3, \ldots, n+1$.
	As in Case 1, we deduce by
	Proposition \ref{prop.sigminval} and \eqref{eq.minval3} that
	\begin{align}
		\ep_{i}(\pi)
		&=-\min
			\left\{H_{i}^\pi(t) \, \middle| \,
				t \in \left\{
					0, \dpq{m-1}, \dpq{m-3}, \ldots,  \dpq{n+1}
				\right\}
			\right\}\\
		&=\max
			\left\{-H_{i}^\pi(t) \, \middle| \,
				t \in \left\{
					0, \dpq{m-1}, \dpq{m-3}, \ldots,  \dpq{n+1}
				\right\}
			\right\}\\
		&=\max_{k-m+1 \in 2\Z}\sigma_k(\mor(\pi))= \ep_{i}(\mor(\pi)).
	\end{align}
	We can show that $\mor(\e_{i}\pi)=\e_{i}\mor(\pi)$
	in exactly the same way as Case 1.

	{\bf{Case 3}}. Assume that $i=i(m)$ and $m+n$ is even.
	Note that the function $H_{i}^\pi(t)$ attains a minimal value
	at $t=\tpq{k}$, $k=m, m-2, \ldots, n+2$ and $t=1$.
	As in Case 1,
	we deduce by Proposition \ref{prop.sigminval} and \eqref{eq.minval1} that
	\begin{align}
		\ep_{i}(\pi)
		&=-\min
			\left\{H_{i}^\pi(t) \, \middle| \,
				t \in \left\{
					\dpq{m}, \dpq{m-2}, \ldots,  \dpq{n+2}, 1
				\right\}
			\right\}\\
			&=\max
				\left\{-H_{i}^\pi(t) \, \middle| \,
					t \in \left\{
						\dpq{m}, \dpq{m-2}, \ldots,  \dpq{n+2}, 1
					\right\}
				\right\}\\
			&=\max_{k-m \in 2\Z}\sigma_k(\mor(\pi))=\ep_{i}(\mor(\pi)).\label{eq.mor4}
	\end{align}
	We show that  $\mor(\e_i\pi)=\e_i\mor(\pi)$.
	If $m=n$, then $\pi= (x_n\lambda; 0, 1 )$.
	We see by definition of $\mor $ that
	\begin{align}
		\mor(\pi)=
		\begin{cases}
			(\ldots, 0, p_n, \ldots, p_2, p_1) \otimes t_{\lambda} \otimes (0,0, \ldots)
			& \text{ if } n >0,\\
			(\ldots, 0, 0) \otimes t_{\lambda} \otimes (0, 0, \ldots)
			& \text{ if } n =0,\\
			(\ldots, 0, 0) \otimes t_{\lambda} \otimes (-p_0, -p_{-1},\ldots, -p_{n+1}, 0, \dots)
			& \text{ if } n < 0.
		\end{cases}
	\end{align}
	Also, we see that
	\begin{align}
		\e_i\pi=
		\begin{cases}
			\bra{x_n\lambda, x_{n-1}\lambda; 0, {(p_n-1)}/{p_n}, 1 } & \text{ if } p_n >1,\\
			\bra{x_{n-1}\lambda; 0, 1 }& \text{ if } p_n =1.
		\end{cases}
	\end{align}
	Thus it is easy to verify that $\mor(\e_i\pi)=\e_i\mor(\pi)$ in this case.
	Assume that $m>n$;
	by the  assumption that $m+n$ is even, we have $m \geq n+2$.
	Let $M_{(i)}$ be as \eqref{eq.mi},
	and set $k':=\max M_{(i)}$.
	If $k' \in \{ m, m-2, \ldots, n+2 \}$,
	then we can show in exactly the same way as Case 1
	that $\mor(\e_{i}(\pi))=\e_{i}\mor(\pi)$.
	Otherwise,
	we see by Proposition \ref{prop.sigminval}  and \eqref{eq.mor4} that
	$k'=n$. Let $\mor(\pi)=(\ldots, y_2, y_1) \otimes t_\lambda \otimes (y_0,y_{-1}, \ldots)$, where  $y_k=z_k(\pi)$.
	Then we see by the definition of $\e_i$ that
	$\e_i\mor(\pi)=(\ldots, y'_2, y'_1) \otimes t_\lambda \otimes (y'_0,y'_{-1}, \ldots)$,
 where  $y'_k=y_k-\delta_{k, n}$.
 	Let $t_1$ and $t_0$ be as \eqref{eq.et1} and \eqref{eq.et0}, respectively.
	We see that $t_1=1$ and $t_0=1-1/(-\pair{x_n\lambda}{\alc_{i}})=1-1/p_{n-1}$.
	By \eqref{eq.estrict} and Remark \ref{rem.alt},
	we have $ \tpq{n+1} \leq t_0$.
	Suppose, for a contradiction, that  $\tpq{n+1}= t_0$; note that
		$H_i^\pi(\tpq{n+1})=H_i^\pi(1)+1$.
	It follows from  Remark \ref{rem.alt}
	that $H_i^\pi(t)$ attains a minimal value at $t=\tpq{n+2}$,
	and hence $H_i^\pi(\tpq{n+2}) \in \Z$ by \eqref{eq.int}.
	Therefore, we obtain $H_i^\pi(\tpq{n+2}) \leq H_i^\pi(1)$,
	which contradicts the definition of $t_1$.
	Therefore we obtain $ \tpq{n+1} <t_0$, which implies that
	\begin{align}
		\e_{i}\pi=\bra{x_m\lambda, \ldots, x_n\lambda,  x_{n-1}\lambda ; 0, \dpq{m} \ldots, \ldots, \dpq{n+1},\frac{p_{n}-1}{p_{n}}, 1 }.
	\end{align}
	Therefore we see that
	\begin{align}
		z_k(\e_i\pi)
		&=
		\begin{cases}
			p_{n}-1 & \text{ if } 1 \leq k=n, \\
			(p_{n}-1)-p_{n} & \text{ if } k=n \leq 0,\\
			q_k & \text{ if } k\neq n, \, 1 \leq k, \text{ and } n+1 \leq k \leq m,\\
			p_k & \text{ if } k\neq n, \,  1 \leq k, \text{ and } k \leq n,\\
			q_k-p_k & \text{ if } k\neq n,\,  k \leq 0, \text{ and } n+1 \leq k \leq m,\\
			-p_k & \text{ if } k\neq n, \,  k \leq 0, \text{ and }  m+1 \leq k,\\
			0 & \text{ otherwise},
		\end{cases}\\
		&= z_k(\pi)-\delta_{k, n}.
	\end{align}
	Hence we obtain  $\e_{i}(\pi)=\e_{i}\mor(\pi)$, as desired.

	{\bf{Case 4}}. Assume that $i=i'(m)$ and $m+n$ is odd.
	Note that the function $H_{i}^\pi(t)$ attains a minimal value
	at $t=0$,  $t=1$,  and $t=\tpq{k}$, $k=m-1, m-3, \ldots, n+2$.
	By Proposition \ref{prop.sigminval}, we get
	\begin{align}
		\ep_{i}(\pi)
		&=-\min
			\left\{H_{i}^\pi(t) \, \middle| \,
				t \in \left\{
					0, \dpq{m-1}, \dpq{m-3}, \ldots,  \dpq{n+2}, 1
				\right\}
			\right\}\\
		&=\max
			\left\{-H_{i}^\pi(t) \, \middle| \,
				t \in \left\{
					0, \dpq{m-1}, \dpq{m-3}, \ldots,  \dpq{n+2}, 1
				\right\}
			\right\}\\
		&=\ep_{i}(\mor(\pi)).
	\end{align}
	We can show in exactly the same way as Case 3 that $\mor(\e_{i}\pi)=\e_{i}\mor(\pi)$.

	Let $\pi \in \B_1(\lambda)$, and $i\in I$.
	Because $\wt(\pi) =\wt(\mor(\pi))$ and $\ep_i(\pi) =\ep_i(\mor(\pi)) $,
	we have $\ph_i(\pi) =\ph_i(\mor(\pi)) $.
	Also, since both $\B_1(\lambda)$ and $\img(\Psi^+_{\iota^+}) \otimes \CT_\lambda \otimes \img(\Psi^-_{\iota^-})$ are normal crystals,
	and since $\mor(\e_i\pi) =\e_i\mor(\pi) $,
	we see that  $\mor(\f_i\pi) =\f_i\mor(\pi)$.
	This completes the proof of Theorem \ref{thm.mor}.
\end{proof}

\subsection{Proof of Theorem \ref{thm.mainb}.}\label{sec.mainprf2}
In this subsection, we assume that  $\lambda \in P$ is of the form either
$k_1\Lambda_1-\Lambda_2$ with $1\leq k_1 < a_1-1$ or
$\Lambda_1-k_2\Lambda_2$ with $1<k_2\leq a_2-1 $;
note that $\lambda$ satisfies the condition that $W\lambda \cap (P^+ \cup -P^+) = \emptyset$
(see \S \ref{sec.maintheorems}).
%

We can prove Theorem \ref{thm.mainb} in exactly the same way as  \cite[Theorem 3.2]{SY}.
So, we give only a sketch of the proof.
In the following, we assume that
$\lambda=k_1\Lambda_1-\Lambda_2$ with $1\leq k_1 < a_1-1$;
the proof for the case that
$\lambda=\Lambda_1-k_2\Lambda_2$ with $1<k_2\leq a_2-1 $
is similar.
Let us identify $\CB(\lambda)$ with
$\{b\in \CB(\infty) \otimes \CT_\mu \otimes \CB(-\infty)
\mid b^\ast \text{ is extremal} \}$ by Theorem \ref{thm.cbext}.
\begin{lemma}[{cf. \cite[Lemmas 3.7 and 3.8]{SY}}]\label{lem.cbconlem}
	\begin{enumerate}[\upshape(1)]
		\item Let $i \in I$ and $b \in \CB(\lambda)$ be such that $\e_ib \neq \0$.
		If $b$ is of the form $b=b_1 \otimes t_\lambda \otimes u_{-\infty}$ with $b_1 \neq u_{\infty}$,
		then $\e_i b=\e_i b_1 \otimes t_\lambda \otimes u_{-\infty}$.\label{item.lemcbconlem1}
		\item Let $i \in I$ and $b \in \CB(\lambda)$ be such that $\f_ib \neq \0$.
		If $b$ is of the form $b=u_{\infty} \otimes t_\lambda \otimes b_2$ with $b_2 \neq u_{-\infty}$,
		then $\f_i b=u_{\infty} \otimes t_\lambda \otimes \f_i b_2$.
	\end{enumerate}
\end{lemma}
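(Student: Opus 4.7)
I will prove part (1); part (2) follows by the symmetric argument obtained by interchanging the roles of $\e_i$ and $\f_i$, of $b_1$ and $b_2$, and of $u_\infty$ and $u_{-\infty}$.

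Viewing $b = (b_1 \otimes t_\lambda) \otimes u_{-\infty}$, I apply the tensor product rule of crystals. Since $\ep_i(t_\lambda) = \ph_i(t_\lambda) = -\infty$, one has $\e_i(b_1 \otimes t_\lambda) = \e_i b_1 \otimes t_\lambda$ and $\ep_i(b_1 \otimes t_\lambda) = \ep_i(b_1)$. From the polyhedral realization in \S\ref{sec.poly}, the element $u_{-\infty}$ corresponds to the zero sequence, so $\ep_i(u_{-\infty}) = \ph_i(u_{-\infty}) = 0$, while $\e_i u_{-\infty}$ is always nonzero. The tensor product rule therefore leaves only two possibilities for the nonzero element $\e_i b$: either (A) $\e_i b = \e_i b_1 \otimes t_\lambda \otimes u_{-\infty}$ (the desired conclusion), or (B) $\e_i b = b_1 \otimes t_\lambda \otimes \e_i u_{-\infty}$.

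Assume, toward a contradiction, that alternative (B) occurs. Since $\e_i b \in \CB(\lambda)$, Theorem \ref{thm.cbext} says that $(\e_i b)^*$ is extremal. Combining \eqref{eq.starope} with the formula $z_2^* = \e_{i_0}^{-c_0} \e_{i_{-1}}^{-c_{-1}} \cdots u_{-\infty}$ recorded after Corollary \ref{cor.polyhofex}, a direct calculation gives $(\e_i u_{-\infty})^* = \e_i u_{-\infty}$, so
\[
(\e_i b)^* = b_1^* \otimes t_{-\lambda - \wt(b_1) - \al_i} \otimes \e_i u_{-\infty}.
\]
Because $*$ is an involution fixing $u_\infty$, the hypothesis $b_1 \neq u_\infty$ is equivalent to $b_1^* \neq u_\infty$; in particular, $\e_j b_1^* \neq \0$ for some $j \in I$.

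The main obstacle, and the crux of the argument (mirroring \cite[Lemmas 3.7 and 3.8]{SY}), is to turn this data into a concrete violation of extremality. The plan is to select a Weyl element $w \in W$ together with a simple index $j$ so that, on the one hand, $\pair{\wt(S_w(\e_i b)^*)}{\alc_j}$ has the sign that forces $\e_j S_w(\e_i b)^* = \0$ or $\f_j S_w(\e_i b)^* = \0$ by extremality; and, on the other hand, applying the tensor product rule to the displayed form above, using that the third factor is $\e_i u_{-\infty}$ rather than $u_{-\infty}$ together with the non-triviality of $b_1^*$, shows that the corresponding image is actually nonzero. The restriction on $\lambda$ (either $1 \leq k_1 < a_1 - 1$ or $1 < k_2 \leq a_2 - 1$) enters precisely here: it controls the pairings $\pair{w\lambda}{\alc_1}$ and $\pair{w\lambda}{\alc_2}$ along the $W$-orbit, guaranteeing the existence of a suitable $w$ in this rank-$2$ hyperbolic setting.
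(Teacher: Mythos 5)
Your setup is fine and matches the paper's: by the tensor product rule the only alternative to the desired conclusion is $\e_i b = b_1 \otimes t_\lambda \otimes \e_i u_{-\infty}$, and applying \eqref{eq.starope} to this gives $(\e_i b)^\ast = b_1^\ast \otimes t_{-\lambda-\wt(b_1)-\al_i} \otimes \e_i u_{-\infty}$, which must be extremal of weight $-\lambda$ by Theorem \ref{thm.cbext}. But from that point on you only describe a plan (``select a Weyl element $w$ and an index $j$ so that extremality forces vanishing while the tensor product rule gives nonvanishing''), without choosing $w$ and $j$, without verifying either side, and without explaining how $b_1^\ast \neq u_\infty$ is actually used. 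That is precisely the content of the lemma, so as written there is a genuine gap: the contradiction is asserted to exist, not derived.

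For comparison, the paper (taking $\lambda = k_1\Lambda_1 - \Lambda_2$) runs a concrete cascade rather than a single choice of $(w,j)$. First, $\ph_i((\e_i b)^\ast) \geq \ph_i(\e_i u_{-\infty}) = 1$ gives $\f_i(\e_i b)^\ast \neq \0$, while $\pair{-\lambda}{\alc_1} = -k_1 \leq 0$ and extremality at $w = \mathrm{id}$ force $\f_1(\e_i b)^\ast = \0$; hence $i = 2$. Second, $\pair{-\lambda}{\alc_2} = 1 \geq 0$ gives $\e_2(\e_2 b)^\ast = \0$, so $\ep_2((\e_2 b)^\ast) = 0$, hence $\ep_2(b_1^\ast) = 0$, and therefore $\ep_1(b_1^\ast) \geq 1$ because $b_1 \neq u_\infty$ (this is where the hypothesis enters, which your sketch never exploits). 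Third, explicit computations $\ph_2\bigl(b_1^\ast \otimes t_{-\lambda-\wt(b_1)-\al_2}\bigr) = -1$ and $\ep_2(\e_2 u_{-\infty}) = -1$ show that $S_2(\e_2 b)^\ast = \f_2(\e_2 b)^\ast$ acts on the third factor and strips it back to $u_{-\infty}$; then $\ep_1(S_2(\e_2 b)^\ast) \geq \ep_1(b_1^\ast) \geq 1$ contradicts $\e_1 S_2(\e_2 b)^\ast = \0$, which extremality demands since $\pair{s_2(-\lambda)}{\alc_1} \geq 0$. Note also that the final nonvanishing is obtained only after applying $S_2$ and removing the $\e_2 u_{-\infty}$ factor, so the one-step scheme you outline (a single $w$ applied directly to the displayed form) would not go through without these intermediate quantitative steps; you would need to supply them to complete the proof.
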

\begin{proof}
We give a proof only for part \eqref{item.lemcbconlem1}.
Suppose, for a contradiction, that  $\e_ib=b_1 \otimes t_\lambda \otimes \e_iu_{-\infty}$.
We see by \eqref{eq.starope} that
$(\e_ib)^\ast = b_1^\ast \otimes t_{-\lambda- \wt (b_1)- \alpha_i} \otimes \e_i u_{-\infty}.$
Since $\ph_i((\e_ib)^\ast)\geq \ph_i(\e_i u_{- \infty})=1$,
it follows from the tensor product rule of crystals that
$\f_i(\e_ib)^\ast \neq \mathbf{0}$.
Because $\e_ib \in \mathcal{B}(\lambda)$,
we see that
$(\e_ib)^\ast $ is an extremal element of weight  $-\lambda$.
Since
$\pair{\wt(S_{\mathrm{id}}(\e_i b)^\ast)}{\alc_1}=\pair{-\lambda}{\al_1^\vee}=-k_1 \leq 0$,
we obtain
$\f_1(\e_ib)^\ast=\mathbf{0}$.
Therefore we have $i=2$ and
$(\e_2b)^\ast =  b_1^\ast \otimes t_{-\lambda- \wt (b_1)- \alpha_2} \otimes \e_2 u_{-\infty}$.
Because $\pair{\wt(S_\mathrm{id}(\e_2 b)^\ast)}{\alc_2}
= \langle -\lambda , \alpha_2^\vee \rangle = 1 \geq 0$, and
$(\e_2b)^\ast $ is an extremal element of extremal weight  $-\lambda$,
we see that $\e_2(\e_2b)^\ast=\0$, and hence $\ep_2((\e_2b)^\ast)=0$.
Since $\ep_2((\e_2 b)^\ast) \geq \ep_2(b_1^\ast)$,
we have $ \ep_2(b_1^\ast) = 0$, which implies $\ep_1(b_1^\ast) \geq 1$
because $b_1 \neq u_{\infty}$.
Hence
\begin{align}
\ph_2(b_1^\ast \otimes t_{-\lambda- \wt (b_1)- \alpha_2} )
&=\ph_2(b_1^\ast)+
\pair{-\lambda- \wt (b_1)- \alpha_2}{\alc_2}\\
&=(\ep_2(b_1^\ast)+
\pair{\wt(b_1^\ast )}{\alc_2} )+
\pair{-\lambda- \wt (b_1)- \alpha_2}{\alc_2} \\
&=\ep_2(b_1^\ast)+ \langle -\lambda-\alpha_2 , \alpha_2^\vee \rangle =-1.\label{eq.lemLScon1}
\end{align}
By this equality and
$\ep_2(\e_2 u_{-\infty})
=\ph_2(\e_2 u_{-\infty})- \pair{\wt(\e_2 u_{-\infty})}{\alc_2} =-1$,
it follows from the tensor product rule of crystals that
$	S_2(\e_2b)^\ast=\f_2(\e_2b)^\ast=
	b_1^\ast \otimes t_{-\lambda- \wt (b_1)- \al_2} \otimes u_{-\infty}$.
Since $\ep_1(b_1^\ast) \geq 1$, we obtain
$\e_1b_1^\ast \neq \0$.
Therefore
it follows from the tensor product rule of crystals that
$\ep_1(S_2(\e_2b)^\ast) \geq \ep_1(b_1^\ast)\geq 1$, that is,
$\e_1 S_2(\e_2b)^\ast \neq \0$.
However,
since   $(\e_2b)^\ast $ is an extremal element of  weight  $-\lambda$
and
$\pair{\wt(S_2(\e_2 b)^\ast)}{\alc_1}= \pair{ s_2(-\lambda)}{\alc_1}\geq 0$,
we see that  $\e_1 S_2(\e_2b)^\ast=\0$, which is a  contradiction.
\end{proof}
Lemma \ref{lem.cbconlem} implies the following proposition (see \cite[Proposition 3.9]{SY}).
\begin{proposition}\label{prop.cbconpro}
	It holds that
$\CB(\lambda) \subset (\CB(\infty) \otimes t_\lambda \otimes u_{-\infty})
\cup (u_{\infty} \otimes  t_\lambda \otimes  \CB(-\infty))$.
\end{proposition}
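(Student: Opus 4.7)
The plan is to argue by contradiction, in the spirit of the proof of Lemma \ref{lem.cbconlem}. Suppose there exists $b = b_1 \otimes t_\lambda \otimes b_2 \in \CB(\lambda)$ with both $b_1 \neq u_\infty$ and $b_2 \neq u_{-\infty}$. I focus on the case $\lambda = k_1\Lambda_1 - \Lambda_2$ with $1 \leq k_1 < a_1 - 1$; the case $\lambda = \Lambda_1 - k_2\Lambda_2$ is symmetric (swap the roles of the two simple roots and of $\e_i$ and $\f_i$). By \eqref{eq.starope},
\[
b^* = b_1^* \otimes t_{\mu'} \otimes b_2^*, \qquad \mu' := -\lambda - \wt(b_1) - \wt(b_2),
\]
with $\wt(b^*) = -\lambda = -k_1\Lambda_1 + \Lambda_2$, and $b^*$ is extremal. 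The engine of the proof will be this extremality evaluated at carefully chosen Weyl group elements.

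First I would apply extremality at $w = \mathrm{id}$. From $\pair{-\lambda}{\alc_1} = -k_1 \leq 0$ we obtain $\f_1 b^* = \0$, and from $\pair{-\lambda}{\alc_2} = 1 \geq 0$ we obtain $\e_2 b^* = \0$. Unpacking the conditions $\ep_2(b^*) = 0$ and $\ph_1(b^*) = 0$ with the tensor product rule (a computation of the same kind as the one underlying \eqref{eq.lemLScon1}) extracts the four constraints
\[
\ep_2(b_1^*) = 0, \quad \ep_1(b_1^*) \leq k_1, \quad \ph_1(b_2^*) = 0, \quad \ph_2(b_2^*) \leq 1.
\]
Since $b_1^* \neq u_\infty$ in $\CB(\infty)$ forces $\ep_j(b_1^*) \geq 1$ for some $j \in I$, and $\ep_2(b_1^*) = 0$, I conclude $\ep_1(b_1^*) \geq 1$. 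Since $b_2^* \neq u_{-\infty}$ in $\CB(-\infty)$ forces $\ph_j(b_2^*) \geq 1$ for some $j \in I$, and $\ph_1(b_2^*) = 0$, I conclude $\ph_2(b_2^*) = 1$; in particular $\f_2 b^* \neq \0$.

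Next I would apply extremality at $w = s_2$. Since $\pair{-\lambda}{\alc_2} = 1 \geq 0$, we have $S_{s_2} b^* = \f_2 b^*$, whose weight is $s_2(-\lambda) = (a_1 - k_1)\Lambda_1 - \Lambda_2$. Because $k_1 < a_1 - 1$, $\pair{s_2(-\lambda)}{\alc_1} = a_1 - k_1 > 1 > 0$, so extremality forces $\e_1 \f_2 b^* = \0$. On the other hand, the equality $\ph_2(b_1^* \otimes t_{\mu'}) = \ep_2(b_2^*)$ (a direct consequence of the four constraints above) puts the tie-breaking of the tensor product rule into the rightmost factor, giving $\f_2 b^* = b_1^* \otimes t_{\mu'} \otimes \f_2 b_2^*$. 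Applying the tensor product rule for $\ep_1$ then yields
\[
\ep_1(\f_2 b^*) \geq \ep_1(b_1^*) \geq 1,
\]
so $\e_1 \f_2 b^* \neq \0$, the desired contradiction.

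The main obstacle is the first step: patiently unpacking the four constraints from $\e_2 b^* = \0$ and $\f_1 b^* = \0$, which requires juggling the formulas $\ep_i(X \otimes Y) = \max(\ep_i(X), \ep_i(Y) - \pair{\wt(X)}{\alc_i})$ and $\ph_i(X \otimes Y) = \max(\ph_i(Y), \ph_i(X) + \pair{\wt(Y)}{\alc_i})$ across the two boundaries of the triple tensor product and simplifying the $\mu'$-shifts with the identity $\wt(b_j^*) = \wt(b_j)$. Once these constraints are in hand, the rest of the argument is essentially formal: the choice of $w = s_2$ (the shortest Weyl group element that turns $-\lambda$ into a weight whose pairing with $\alc_1$ is strictly positive) is dictated by the hypothesis $k_1 < a_1 - 1$.
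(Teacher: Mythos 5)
Your proof is correct, but it is organized differently from the paper's. The paper does not write out a proof of this proposition at all: it deduces it from Lemma \ref{lem.cbconlem} by pointing to the analogous statement \cite[Proposition 3.9]{SY}, so the only argument visible in the text is the one for Lemma \ref{lem.cbconlem}. You instead give a direct, self-contained contradiction argument that never invokes Lemma \ref{lem.cbconlem}: extremality of $b^\ast$ at $w=\mathrm{id}$ yields $\ep_2(b_1^\ast)=0$, $\ep_1(b_1^\ast)\le k_1$, $\ph_1(b_2^\ast)=0$, $\ph_2(b_2^\ast)\le 1$ (I checked these unpackings; they are right, using $\wt(b_j^\ast)=\wt(b_j)$), and then $b_1\neq u_\infty$, $b_2\neq u_{-\infty}$ force $\ep_1(b_1^\ast)\ge 1$ and $\ph_2(b_2^\ast)=1$; extremality at $w=s_2$ gives $\e_1\f_2 b^\ast=\0$ since $\pair{s_2(-\lambda)}{\alc_1}=a_1-k_1>0$, while the tie $\ph_2(b_1^\ast\otimes t_{\mu'})=\ep_2(b_2^\ast)$ sends $\f_2$ to the rightmost factor (the same tie-breaking the paper uses inside the proof of Lemma \ref{lem.cbconlem}), so $\ep_1(\f_2 b^\ast)\ge\ep_1(b_1^\ast)\ge 1$, a contradiction; the mirrored argument with $w=s_1$ and $\e_1$ handles $\lambda=\Lambda_1-k_2\Lambda_2$, matching the paper's own decision to treat only the first case explicitly. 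What your route buys is independence from Lemma \ref{lem.cbconlem} and from the external reference \cite{SY}; what the paper's route buys is brevity, since the extremality computations are done once in the lemma and the proposition is obtained by citation. Either way the technique (extremality of $b^\ast$ plus the tensor product rule on $\CB(\infty)\otimes\CT_{\mu'}\otimes\CB(-\infty)$) is the same circle of ideas.
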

Here, we set $|\al|:=\sum_{i\in I} |c_i|$ for
$\al= \sum_{i\in I} c_i\al_i \in \bigoplus_{i \in I}\Z \al_i$.
By Proposition \ref{prop.cbconpro},
we see that $b \in  \CB(\lambda) $ is of the form either
$b=b_1 \otimes t_\lambda \otimes u_{-\infty} $ with some $b_1 \in \CB(\infty)$
or $b=u_{\infty}  \otimes t_\lambda \otimes b_2 $ with some $b_2 \in \CB(-\infty)$.
We deduce by induction on $|\wt(b_1)|$ \resp{$|\wt(b_2)|$} that
if $b$ is of the form $b_1 \otimes t_\lambda \otimes u_{-\infty} $
\resp{$b=u_{\infty}  \otimes t_\lambda \otimes b_2 $},
then
$b=\f_{i_r} \cdots \f_{i_1}u_\lambda$ \resp{$b=\e_{i_r} \cdots \e_{i_1}u_\lambda$}
for some $i_1, \ldots, i_r$ (see \cite[Proof of Theorem 3.2]{SY}).
Thus we have proved Theorem \ref{thm.mainb}.

\begin{remark}\label{rem.cbdecomp}
	Set
	\begin{align}
		\CB(\lambda)_-
		&:=
		\{
			\f_{i_l} \cdots \f_{i_2} \f_{i_1} u_\lambda
			\mid
			i_1, i_2, \ldots i_l \in I, l \geq 1
		\}
		\backslash \{ \0 \},\\
		\CB(\lambda)_+
		&:=
		\{
			\e_{i_l} \cdots \e_{i_2} \e_{i_1} u_\lambda
			\mid
			i_1, i_2, \ldots i_l \in I, l \geq 1
		\}
		\backslash \{ \0 \}.
	\end{align}
	By Theorem \ref{thm.mainb},
	we can decompose $\CB(\lambda)$ as
	\begin{align}
		\CB(\lambda)=\CB(\lambda)_- \sqcup \{ u_{\lambda} \} \sqcup 	\CB(\lambda)_+.
	\end{align}
	In particular, we see that $\#\CB(\lambda)_{\mu}<\infty$,
	where  $\CB(\lambda)_{\mu}:= \{ b \in \CB(\lambda) \mid \wt(b)=\mu \}$.
\end{remark}

\section{Computation of  $\#\CB(\Lambda_1 -\Lambda_2)_{\mu}$. }\label{sec.comp}
In this section, we assume that
\begin{align}\label{eq.symcal}
	A=
	\begin{pmatrix}
		2 & -a \\-a & 2\\
	\end{pmatrix} \
	\text{ with }
	a\geq 3, \ \text{ and } \lambda=\Lambda_1 -\Lambda_2.
\end{align}
By Corollary \ref{cor.main} (see also \cite[Theorem 3.6]{SY}),
we have $ \CB(\lambda) \cong  \B(\lambda)$.
The aim of this section is to give an algorithm for computing
$\mathrm{dim} V(\lambda)_\mu =\# \CB(\lambda)_{\mu}$ for $\mu\in P$.

\subsection{Subsets  of $\Z_{\geq 0}^{+\infty}$  and  $\Z_{\leq 0}^{-\infty}$.}\label{sec.alg}
We define maps $F$ and $F'$ as follows.
Recall that the sequence $\{ p_m\}$ is defined by recursive formulas \eqref{eq.pm} and \eqref{eq.pm2} for $\lambda=\Lambda_1 -\Lambda_2$;
we know from \cite[Lemma 3.5]{H} that
\begin{align}
	\cdots p_3>p_2> p_1=1 = p_0 =1< p_{-1} < p_{-2}< \cdots.
\end{align}
For $x\in \Z_{\geq 2}$,
we denote by  $n(x)$ the (unique) positive integer such that  $p_{n(x)-1}< x \leq p_{n(x)}$,
and set $n(1):=1$.
For  $x\in \Z_{\geq 1}$, we define  $F(x)$  to be  the (unique)  integer such that
\begin{align}\label{eq.notef}
	\frac{F(x)}{p_{n(x)+1}} \leq \frac{x}{p_{n(x)}} < \frac{F(x)+1}{p_{n(x)+1}},
\end{align}
and set $F(0):=0$;
note that
\begin{align}\label{eq.notefnpn}
	n(p_m)=m \text{ and }  F(p_m)=p_{m+1} \quad \text{ for some } m \geq 1.
\end{align}
Similarly, for  $x\in \Z_{\leq -2}$,
we denote by  $n'(x)$ the negative integer such that  $-p_{n'(x)} \leq x < -p_{n'(x)+1}$,
and set $n'(-1):=0$.
For $x\in \Z_{\leq -1}$, we define $F'(x)$ to be the  integer such that
\begin{align}
	\frac{F'(x)-1}{p_{n'(x)-1}} < \frac{x}{p_{n'(x)}} \leq  \frac{F'(x)}{p_{n'(x)-1}},
\end{align}
and set $F'(0):=0$;
note that
\begin{align}\label{eq.notef2}
	n'(-p_m)=m \text{ and }  F'(-p_m)=-p_{m-1} \quad \text{ for some } m \leq 0.
\end{align}
%
\begin{lemma}\label{lem.mainlem}
	\begin{enumerate}[\upshape(1)]
			\item  Let $x\in \Z_{\geq 1}$.	For all $m\geq n(x)$, we have
			\begin{align}\label{eq.mainlem}
				\frac{F(x)}{p_{m+1}} \leq \frac{x}{p_{m}} < \frac{F(x)+1}{p_{m+1}}.
			\end{align}\label{enu.mainlem1}
		\item Let $x \in \Z_{\leq -1}$. For all $m\leq n'(x)$, we have
			\begin{align}\label{eq.mainlem2}
					\frac{F'(x)-1}{p_{m-1}} < \frac{x}{p_{m}} \leq  \frac{F'(x)}{p_{m-1}}.
			\end{align}\label{enu.mainlem2}
	\end{enumerate}
\end{lemma}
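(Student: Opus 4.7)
The plan is to prove part (1) by induction on $m \geq n(x)$, with the base case $m = n(x)$ being the definition of $F(x)$ itself. For the inductive step I would introduce $y_m := x p_{m+1} - F(x) p_m$, so that \eqref{eq.mainlem} becomes the clean statement $0 \leq y_m < p_m$.

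The key input is the identity $p_m p_{m+2} - p_{m+1}^2 = a - 2$ for all $m \in \Z$. In the symmetric case the recurrence $p_{m+2} = a p_{m+1} - p_m$ holds uniformly in $m$, so a short telescoping shows that $p_m p_{m+2} - p_{m+1}^2$ is $m$-invariant, and its value is then read off at $m = 0$. Multiplying out $y_{m+1} p_m - y_m p_{m+1}$ and simplifying via this identity yields
\[
  y_{m+1} p_m - y_m p_{m+1} = x(a-2),
\]
so $y_{m+1} = (y_m p_{m+1} + x(a-2))/p_m$. Assuming inductively $0 \leq y_m \leq p_m - 1$, the lower bound $y_{m+1} \geq 0$ is immediate from $x(a-2) \geq 0$, and the upper bound $y_{m+1} \leq p_{m+1} - 1$ reduces algebraically to the inequality $x(a-2) \leq p_{m+1} - p_m$. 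To secure this last inequality for all $m \geq n(x)$, I would observe that $\delta_m := p_{m+1} - p_m$ obeys the same recurrence $\delta_{m+1} = a \delta_m - \delta_{m-1}$ with $\delta_0 = 0$ and $\delta_1 = a - 2$, and is therefore non-decreasing for $m \geq 0$; at the base $m = n(x)$ one has $p_{n(x)+1} - p_{n(x)} = (a-1) p_{n(x)} - p_{n(x)-1} \geq (a-2) x$ using $x \leq p_{n(x)}$ together with $p_{n(x)} \geq p_{n(x)-1}$.

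For part (2), I plan to invoke the palindromic symmetry $p_{-m} = p_{m+1}$ for $m \geq 0$, which holds in the symmetric case because $\{p_m\}_{m \in \Z}$ satisfies a single three-term recurrence with $p_0 = p_1 = 1$. Under the substitution $x = -y$ with $y \geq 1$, one checks that $n'(-y) = 1 - n(y)$ and $F'(-y) = -F(y)$, so that \eqref{eq.mainlem2} at index $m$ coincides with \eqref{eq.mainlem} at index $1 - m$; alternatively one can simply rerun the induction using $z_m := F'(x) p_m - x p_{m-1}$ together with the identity $p_{m-2} p_m - p_{m-1}^2 = a - 2$. The main obstacle is the careful bookkeeping at the boundary case $x = 1$, where the base inequality $x(a-2) \leq p_2 - p_1$ holds with equality and the induction succeeds only by a hair, and in arranging the index shifts in the symmetry argument for (2); the inductive engine itself is transparent once the cross-identity $y_{m+1} p_m - y_m p_{m+1} = x(a-2)$ and the monotonicity of $\delta_m$ are in place.
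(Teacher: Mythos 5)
Your proof is correct, but it takes a genuinely different route from the paper's. You run a direct induction on $m$, recasting \eqref{eq.mainlem} as $0 \le y_m \le p_m-1$ for the integer $y_m := xp_{m+1}-F(x)p_m$, and propagate the bounds via the cross-identity $y_{m+1}p_m - y_m p_{m+1} = x\,(p_m p_{m+2}-p_{m+1}^2) = x(a-2)$ together with the monotonicity of the differences $p_{m+1}-p_m$; your base estimate $x(a-2)\le p_{n(x)+1}-p_{n(x)}$ is exactly where $x\le p_{n(x)}$ enters, and all the ingredients (the constancy of $p_mp_{m+2}-p_{m+1}^2$, the symmetry $p_{-m}=p_{m+1}$, the relations $n'(-y)=1-n(y)$, $F'(-y)=-F(y)$) are valid under the standing assumptions $a_1=a_2=a$, $p_0=p_1=1$ of Section 5, so the argument goes through, including the borderline step at $x=1$ that you flag. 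The paper instead rewrites \eqref{eq.mainlem} as $F(x)/x \le p_{m+1}/p_m < (F(x)+1)/x$, gets the left inequality from the monotonicity of the ratios $p_{m+1}/p_m$, and proves the right one by contradiction: a first failure would give an index $m\ge n(x)$ with $p_{m+1}/p_m < (F(x)+1)/x \le p_{m+2}/p_{m+1}$, and then $p_m(F(x)+1)-xp_{m+1}$ would be a positive integer which the same determinant identity $p_mp_{m+2}-p_{m+1}^2=a-2$, combined with $x\le p_{n(x)}$ and $p_{n(x)}/p_{n(x)+1}\le p_1/p_2=1/(a-1)$, forces to be strictly less than $1$. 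So both arguments hinge on the invariant $a-2$ and the bound $x\le p_{n(x)}$: yours trades the paper's short integrality-contradiction for explicit inductive bookkeeping of $y_m$ (slightly longer, but it also yields the recursion $y_{m+1}=(y_mp_{m+1}+x(a-2))/p_m$, which could be reused computationally), while your symmetry reduction makes part (2) a formal consequence of part (1), where the paper simply asserts that the proof of (2) is similar.
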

\begin{proof}
	We give a proof only for part
\eqref{enu.mainlem1};
the proof for part \eqref{enu.mainlem2} is similar.
We show the following inequalities, which are equivalent to \eqref{eq.mainlem}:
\begin{align}
	\frac{F(x)}{x}  \leq \frac{p_{m+1}}{p_{m}}<\frac{F(x)+1}{x}.
\end{align}
Because the sequence
$\{ {p_{m+1}}/{p_{m}} \}_{m \geq 1}$ is an increasing sequence, the first inequality is obvious by the definition \eqref{eq.notef} of $F(x)$.
We show the second inequality.
Suppose, for a contradiction, that there exists $m \geq n(x)$ such that
\begin{align}\label{eq.contra}
	 \frac{p_{m+1}}{p_{m}} < \frac{F(x)+1}{x} \leq \frac{p_{m+2}}{p_{m+1}}.
\end{align}
We compute
\begin{align}
	p_m(F(x)+1)-xp_{m+1}
	& \leq p_m \left( \frac{p_{m+2}}{p_{m+1}}x \right) -xp_{m+1}
	\quad \text{ by the second inequality in \eqref{eq.contra} }\\
	&= \frac{x}{p_{m+1}}(p_mp_{m+2}-{p_{m+1}}^2) \\
	&= \frac{x}{p_{m+1}}(p_0 p_{2}-{p_{1}}^2)
		\quad \text{ by } \eqref{eq.pm}\\
	&= \frac{x}{p_{m+1}}(1 (a-1)-{1}^2)\\
	&\leq \frac{p_{n(x)}}{p_{n(x)+1}}(a-2)
			\quad  \text{ by }  n(x) \leq m \text{ and } x\leq p_{n(x)}\\
	&\leq \frac{p_1}{p_2}(a-2)
		\quad\text{ since } \{ {p_{m}}/{p_{m+1}} \}_{m \geq 1} \text{ is decreasing}\\
	&=\frac{1}{a-1}(a-2)<1 \quad  \text{ since } a\geq3. \label{eq.mainlem1}
\end{align}
	However,
	by the first  inequality in \eqref{eq.contra}, we obtain $p_m(F(x)+1)-xp_{m+1} > 0$,
	which contradicts the fact that
		$p_m(F(x)+1)-xp_{m+1}$ is a integer.
	Thus we have proved \eqref{eq.mainlem}.
\end{proof}
The next corollary follows from Lemma \ref{lem.mainlem} and the facts that
\begin{align}
	\lim_{m \to +\infty} \frac{p_{m+1}}{p_{m}} = \frac{a+ \sqrt{a^2-4}}{2}
	\quad \text{ and } \quad
	\lim_{m \to -\infty}\frac{p_{m-1}}{p_{m}} =\frac{a+ \sqrt{a^2-4}}{2}.
\end{align}
\begin{corollary}\label{cor.lim}
	For $x \in \Z_{\geq 0} $,
	\begin{align}
		F(x)= \left\lfloor   \frac{a+\sqrt{a^2-4}}{2} x  \right\rfloor,
	\end{align}
 and for $x \in \Z_{\leq 0} $,
	\begin{align}
		F'(x)= \left\lceil   \frac{a+\sqrt{a^2-4}}{2} x \right\rceil.
	\end{align}
\end{corollary}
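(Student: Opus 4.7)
The plan is to pass to the limit $m \to \pm\infty$ in Lemma~\ref{lem.mainlem}, combining the resulting estimates with the stated limits of the ratios $p_{m+1}/p_m$ and $p_{m-1}/p_m$. The only subtlety will be upgrading the weak inequalities that survive the limiting process to strict ones; this will come from the irrationality of $\varphi := (a+\sqrt{a^2-4})/2$.

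For $x \in \Z_{\geq 1}$, I would multiply the double inequality \eqref{eq.mainlem} through by $p_m > 0$ to rewrite it uniformly in $m$ as
\begin{align}
F(x) \;\leq\; \frac{p_{m+1}}{p_m}\,x \;<\; F(x)+1 \qquad (m \geq n(x)),
\end{align}
and then let $m \to +\infty$, using the stated limit $p_{m+1}/p_m \to \varphi$, to obtain $F(x) \leq \varphi x \leq F(x)+1$.

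The key step---and the only real obstacle, mild as it is---is excluding the boundary cases $\varphi x = F(x)$ and $\varphi x = F(x)+1$. For this I would observe that, since $a \geq 3$, the discriminant $a^2 - 4$ lies strictly between the consecutive squares $(a-1)^2$ and $a^2$, hence is not a perfect square, so $\varphi$ is irrational. Therefore $\varphi x$ is irrational for every $x \in \Z_{\geq 1}$, both equalities are excluded, and $F(x) < \varphi x < F(x)+1$, i.e., $F(x) = \lfloor \varphi x \rfloor$. The boundary case $x = 0$ is immediate from $F(0) = 0 = \lfloor 0 \rfloor$.

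The formula for $F'$ is handled by the symmetric argument applied to part~\eqref{enu.mainlem2} of Lemma~\ref{lem.mainlem}: multiplying \eqref{eq.mainlem2} by $p_{m-1} > 0$ gives $F'(x)-1 < (p_{m-1}/p_m)\,x \leq F'(x)$ for $m \leq n'(x)$; taking $m \to -\infty$ and invoking the stated limit yields $F'(x)-1 \leq \varphi x \leq F'(x)$, and the same irrationality of $\varphi$ promotes the weak bounds to strict ones, giving $F'(x) = \lceil \varphi x \rceil$ for $x \in \Z_{\leq -1}$ (with $x = 0$ again trivial).
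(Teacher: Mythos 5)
Your argument is correct and is essentially the paper's own: the paper likewise deduces the corollary by letting $m\to\pm\infty$ in Lemma~\ref{lem.mainlem} together with the stated limits of $p_{m+1}/p_m$ and $p_{m-1}/p_m$, and your use of the irrationality of $(a+\sqrt{a^2-4})/2$ is the natural way to rule out the boundary equalities that the limit alone leaves open. Only a trivial slip: to pass from \eqref{eq.mainlem} to $F(x)\leq (p_{m+1}/p_m)x<F(x)+1$ you multiply by $p_{m+1}$, not $p_m$; the displayed inequality you use is nevertheless the correct one.
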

Now, we set
\begin{align}
	Y_{-}&:=\{ (\ldots, y_j, \ldots, y_2, y_1) \in \Z_{\geq 0}^{+\infty}
		\mid y_{j+1} \leq F(y_j) \text{ for all } j \geq 1 \},\\
	Y_{+}&:=\{ (y_0, y_{-1}, \ldots, y_j, \ldots,  ) \in \Z_{\leq 0}^{-\infty}
		\mid F'(y_j) \leq  y_{j-1}  \text{ for all } j \leq 0 \}.
\end{align}
\begin{remark}\label{rem.Ypm}
	Let $y=(\ldots, y_j, \ldots y_2, y_1)  \in Y_{-}$.
	If there exists $l  \in \Z_{\geq1} $ such that $y_l=0$, then we see by  $F(0) =0$ that
	$y_j=0$ for all $j>l$.
	Therefore, if  $y\neq ( \ldots 0, 0)$,
	then
	$y$ is of the form $y=(\ldots, 0,  y_m, \ldots y_2, y_1) $ for some $m \in \Z_{\geq1}$,
	where $y_j>0$ for all $1 \leq j\leq m$.
	Similarly, we see that
	$ y \in Y_{+}$ is of the form  either
	$y=(0, 0, \ldots )$ or
	$y=(y_0, y_{-1}, \ldots, y_m, 0, \ldots,  )$ for some $m \in \Z_{\leq 0}$, where $y_j<0$ for all $m \leq j\leq 0$.
\end{remark}
For $n_1, n_2 \geq 0$ and $ 0 \leq m\leq n_1$, we set
\begin{align}
	Y_{-}(n_1, n_2; m):=\bigset{(\ldots,  y_j, \ldots, y_2, y_1) \in Y_{-} }{ y_1=m,  \sum_{j: \text{ odd}} y_j= n_1,
	\sum_{j: \text{ even}} y_j= n_2},
\end{align}
and for $n_1, n_2 \leq 0$ and $n_2 \leq m\leq 0$, we set
\begin{align}
	Y_{+}(n_1, n_2; m):=\bigset{(y_0, y_{-1}, \ldots,  y_j, \ldots ) \in Y_{+} }{ y_0=m,  \sum_{j: \text{ odd}} y_j= n_1,
	\sum_{j: \text{ even}} y_j= n_2}.
\end{align}

We give an algorithm for computing $\#Y_{-}(n_1, n_2; m)$.
It is obvious by Remark \ref{rem.Ypm} that
for $n_1, n_2 \geq 0$,
\begin{align}
	\#Y_{-}(n_1,n_2; 0)=
		\begin{cases}
			1 &\text{ if } n_1 = n_2=0,\\
			0 &\text{ otherwise}.
		\end{cases}
\end{align}
Assume that $1\leq m \leq n_1$.
We have  $Y_{-}(m, 0; m)=\{(\ldots, 0, m) \}$
and  $Y_{-}(n_1, 0; m)=\emptyset$ if $m < n_1$ by Remark \ref{rem.Ypm}.
Hence, for  $1\leq m \leq n_1$,
\begin{align}\label{eq.ym0}
	\#Y_{-}(n_1,0;m)=
		\begin{cases}
			1 &\text{ if } m = n_1,\\
			0 &\text{ if } m < n_1.
		\end{cases}
\end{align}
Assume that $1\leq m\leq n_1$ and $n_2 \geq 1$.
Let $y= (\ldots,  y_3,  y_2, y_1) \in Y_{-}(n_1,n_2;m)$.
Then, $y$ is of the form  $y= (\ldots,  y_3,  y_2, m)$.
By Remark \ref{rem.Ypm} and the assumption that $n_2 \geq 1$, we have $y_2\geq 1$.
By the definition of $Y_{-}$, we obtain $y_2 \leq F(y_1)=F(m)$.
Moreover, we have  $y_2\leq n_2$
since $y \in Y_{-}(n_1,n_2;m)$.
Hence, $1 \leq y_2\leq \min\{ F(m), n_2\}$.
Then we have
\begin{align}
		Y_{-}(n_1,n_2;m)=\bigsqcup_{l=1}^{\min\{F(m), n_2\}}\bigset{(\ldots,  y_3, l ,m) \in Y_{-} }{ m+
	 \sum_{j=3, 5, \ldots} y_j= n_1,
	l+ \sum_{j=4, 6 \ldots} y_j= n_2}.
\end{align}
For $1 \leq  l\leq \min\{ F(m), n_2\}$,
we see that
\begin{align}
	&\# \bigset{(\ldots, y_4,   y_3, l ,m) \in Y_{-} }{ m+ \sum_{j=3, 5, \ldots} y_j= n_1, \,
	l+\sum_{j=4, 6 \ldots} y_j= n_2}.\\
	=& \#\bigset{(\ldots, y_4,  y_3,  l) \in Y_{-} }{ \sum_{j=3, 5, \ldots} y_j= n_1-m, \,
			l+ \sum_{j=4, 6 \ldots } y_j= n_2}.\\
	=&\#Y_{-}(n_2,n_1-m;l).
\end{align}
Hence we obtain
\begin{align}\label{eq.ygen}
	\#Y_{-}(n_1,n_2;m)=\sum_{l=1}^{\min\{F(m),n_2\}} \#Y_{-}(n_2,n_1-m;l).
\end{align}
If $n_1-m=0$,  then
we see by \eqref{eq.ym0} that  $\#Y_{-}(n_2,n_1-m;l)$ becomes a finite sum of $0$ and $1$.
Assume that $n_1-m>0$.
We set $n_1':=n_2$ and  $n_2':=n_1-m$.
Let $m'$ be such that  $1 \leq  m'\leq \min\{ F(m), n_2\}$.
Since $1  \leq m' \leq n_1'$ and $n_2'\geq 1$,
we obtain,
by the  same argument as above,
\begin{align}
	\#Y_{-}(n_1',n_2';m')
	&=\sum_{l=1}^{\min\{F(m'), n_2'\}} \#Y_{-}(n_2', n_1'-m', l)\\
	&= \sum_{l=1}^{\min\{F(m'), n_1-m\}} \#Y_{-}(n_1-m, n_2-m', l).
\end{align}
Because $n_2-m'\leq n_2-1$,
this  process ends after at most $n_2$ steps.
Similarly, we can compute  $\#Y_{+}(n_1,n_2;m)$.
\subsection{Number of elements  in $\CB(\lambda)_{\mu}$.}
We set
\begin{align}
	Z(\lambda)_{-}&:=
	\{ y \in \Z_{\iota^+}^{+\infty} \mid  y \text{ is of the form either }  \eqref{enu.z1} \text{ or } \eqref{enu.z2}
	\},\\
	Z(\lambda)_{+}&:=
	\{ y \in \Z_{\iota^-}^{-\infty} \mid  y \text{ is of the form either }  \eqref{enu.z5} \text{ or } \eqref{enu.z6}
	\},
\end{align}
where
\begin{enumerate}[\upshape(a)]
	\item $(\ldots, 0, p_m, \ldots, p_2, p_1)$ for some  $1 \leq m$;\label{enu.z1}
	\item $(\ldots, 0, q_m, \ldots, q_{n+2}, q_{n+1}, p_n, \ldots, p_2, p_1) $ for some $n, m\in\Z $ such that $1 \leq n < m$, where $q_{m}, q_{m-1}, \ldots, q_{n+1}$ are  integers satisfying that
	$0<q_j<p_j$ for $n+1\leq j \leq m$, and
  $\tpq{j+1}<\tpq{j}$  for $n+1\leq j \leq m-1$;\label{enu.z2}
	\item $(-p_0, -p_{-1}, \ldots, -p_{m+1},  q_m-p_m, q_{m-1}-p_{m-1}, \ldots, q_{n+1}-p_{n+1}, 0, \dots) $ for some $n, m \in\Z $ such that $n < m \leq -1$, where $q_{m}, q_{m-1}, \ldots, q_{n+1}$ are integers satisfying that
	$0<q_j<p_j$ for $n+1\leq j \leq m$,  and
  $\tpq{j+1}<\tpq{j}$  for $n+1\leq j \leq m-1$;\label{enu.z5}
	\item $(-p_0, -p_{-1},\ldots, -p_{n+1}, 0, \dots)$ for some $n \leq -1$.\label{enu.z6}
\end{enumerate}
Let
\begin{align}
	Y(\lambda)_{-}&:=\{ (\ldots, 0, y_m, \ldots y_2, y_1) \in Y_{-} \mid y_1 = 1 \},\\
	Y(\lambda)_{+}&:=\{ (y_0, y_{-1}, \ldots, y_j, \ldots,  ) \in Y_{+} \mid y_0 =-1 \}.
\end{align}

\begin{proposition}\label{prop.zy}
	It hold that $Z(\lambda)_{-} =Y(\lambda)_{-}$ and   $Z(\lambda)_{+} =Y(\lambda)_{+}$.
\end{proposition}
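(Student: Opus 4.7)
The plan is to prove the two equalities by separately verifying both inclusions. Both identifications are essentially bookkeeping together with one genuine arithmetic point, which I flag below.

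First, for $Z(\lambda)_- \subseteq Y(\lambda)_-$, I take any $y \in Z(\lambda)_-$. In both forms (a) and (b), the last coordinate is $p_1 = 1$, so $y_1 = 1$; I then check the $Y_-$ condition $y_{j+1} \leq F(y_j)$ index by index. For pairs where $y_j = p_j$ and $y_{j+1} = p_{j+1}$ the equality $F(p_j) = p_{j+1}$ from \eqref{eq.notefnpn} handles it; for the transition $y_n = p_n$, $y_{n+1} = q_{n+1}$ in case (b), the bound $q_{n+1} < p_{n+1} = F(p_n)$ handles it; for consecutive indices both in the $q$-range, the strict inequality $q_{j+1}/p_{j+1} < q_j/p_j$ plus Lemma \ref{lem.mainlem}\eqref{enu.mainlem1} applied at $m = j$ (legal because $q_j < p_j$ forces $n(q_j) \leq j$) gives $q_{j+1}/p_{j+1} < q_j/p_j < (F(q_j)+1)/p_{j+1}$, hence $q_{j+1} \leq F(q_j)$. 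The tail of zeros is trivial since $F(q_m) \geq 0$.

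For the reverse inclusion $Y(\lambda)_- \subseteq Z(\lambda)_-$, take $y = (\ldots,0,y_m,\ldots,y_1) \in Y(\lambda)_-$, which by Remark \ref{rem.Ypm} has all $y_j > 0$ for $1 \leq j \leq m$ and $y_1 = 1 = p_1$. Define $n$ to be the largest index such that $y_j = p_j$ for every $1 \leq j \leq n$. If $n = m$, $y$ is of form \eqref{enu.z1}. Otherwise, set $q_j := y_j$ for $n+1 \leq j \leq m$; I then verify $0 < q_j < p_j$ and $q_{j+1}/p_{j+1} < q_j/p_j$. For the former, at $j = n+1$ the maximality of $n$ together with $y_{n+1} \leq F(p_n) = p_{n+1}$ gives $q_{n+1} < p_{n+1}$, and for $j > n+1$ I induct using Lemma \ref{lem.mainlem}\eqref{enu.mainlem1} to get $q_j \leq F(q_{j-1}) \leq q_{j-1} p_j/p_{j-1} < p_j$.

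The one genuine arithmetic point is the strict inequality $q_{j+1}/p_{j+1} < q_j/p_j$. The chain $q_{j+1} \leq F(q_j)$ together with Lemma \ref{lem.mainlem}\eqref{enu.mainlem1} only yields $\leq$. To upgrade to $<$, I will prove $\gcd(p_j, p_{j+1}) = 1$ for all $j \in \Z$ by induction on the recursion \eqref{eq.pm}, \eqref{eq.pm2}: since $p_0 = p_1 = 1$ and $p_{j+2} \in \{a_1 p_{j+1} - p_j, \; a_2 p_{j+1} - p_j\}$, any common divisor of $p_{j+1}$ and $p_{j+2}$ divides $p_j$, so the claim propagates. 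Equality $q_{j+1}/p_{j+1} = q_j/p_j$ would then force $p_j \mid q_j$, contradicting $0 < q_j < p_j$; thus strict inequality holds, and $y$ has the form \eqref{enu.z2}. This is the step I expect to be the only non-routine one.

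The identification $Z(\lambda)_+ = Y(\lambda)_+$ is proved by the symmetric argument: the common last nonzero coordinate on the $+$ side is $y_0 = -p_0 = -1$, case \eqref{enu.z6} corresponds to the ``all $p$'s'' portion and case \eqref{enu.z5} to the mixed case, and one uses Lemma \ref{lem.mainlem}\eqref{enu.mainlem2} and \eqref{eq.notef2} in place of their $F$-counterparts, with the same coprimality argument to upgrade $\leq$ to $<$ on the ratios $(q_{j-1}-p_{j-1})/\cdots$ (equivalently $q_{j+1}/p_{j+1} < q_j/p_j$ rewritten in terms of the $y_j = q_j - p_j$ entries). No new idea is needed.
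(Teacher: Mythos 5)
Your proposal is correct and follows essentially the same route as the paper: both inclusions are handled via \eqref{eq.notefnpn} and Lemma \ref{lem.mainlem}, the $p$-segment of a $Y(\lambda)_-$ element is identified, and the ratio inequality is upgraded from $\leq$ to $<$ using the coprimality of consecutive $p_j$'s (which the paper cites from \cite[Lemma 4.5 (1)]{H} and you reprove by the same recursion argument). The only cosmetic difference is that you locate the boundary index $n$ as the end of the maximal initial run and propagate $y_j<p_j$ upward, whereas the paper takes the largest index with $y_n=p_n$ and propagates equality downward; both are valid.
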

\begin{proof}
	We give a proof only for  $Z(\lambda)_{-} =Y(\lambda)_{-}$;
	the proof for $Z(\lambda)_{+} =Y(\lambda)_{+}$ is similar.
	By \eqref{eq.notefnpn} and Lemma \ref{lem.mainlem},
	we can easily check that  $Z(\lambda)_{-} \subset Y(\lambda)_{-}$.
	We show the reverse inclusion  $Z(\lambda)_{-} \supset Y(\lambda)_{-}$.
	Let $y \in Y(\lambda)_{-}$.
	By Remark \ref{rem.Ypm} and the definition of $Y(\lambda)_{-}$,
	the element $y$ is of the form $y=(\ldots, 0,  y_m, \ldots y_2, y_1) $ for some $m\in \Z_{\geq 1}$ with  $y_j>0$ for all $1 \leq j\leq m$ .
	By $y_1= 1 =  p_1$ and \eqref{eq.notefnpn},
	we see that $y_2\leq F(y_1) \leq  F(p_1)=p_2$,
	where we use the monotonicity of $F$ (see Corollary \ref{cor.lim}).
	Similarly, we see by $y_2 \leq p_2$ and  \eqref{eq.notefnpn} that
	$y_3\leq F(y_2)\leq  F(p_2)=p_3$.
	Repeating this argument, we obtain
	\begin{equation}\label{eq.zy1}
			1\leq y_j \leq p_j  \quad \text{ for  all } 1 \leq j\leq m.
	\end{equation}
	Let $n$ be the largest integer $n' \leq m$ such that $y_{n'}=p_{n'}$;
	note that $y_1=1=p_1$.
	Since $y \in Y_-$ and
	\eqref{eq.zy1},
	we have $p_{n}\leq F(y_{n -1}) \leq F(p_{n -1})=p_{n}$.
	Hence we  obtain  $F(y_{n-1})=p_{n}$.
	Since $1=F(y_{n-1 })/p_{n} \leq y_{n-1}/p_{n-1}$ by  Lemma \ref{lem.mainlem}
	and this equality,
	we get $p_{n-1} \leq y_{n-1}$.
	Hence, by \eqref{eq.zy1}, we have $p_{n-1} =y_{n-1}$.
	Repeating this argument, we obtain
	$p_{j} =y_{j}$ for  all $1 \leq j \leq n$.
	If $n=m$, then we obtain $y=(\ldots, 0,  p_m, \ldots p_2, p_1)$,
	which  is of the form \eqref{enu.z1}, and hence
	 $y \in  Z(\lambda)_{-}$.
	Assume that $n < m$.
	Then we have $y=(\ldots, y_m, \ldots, y_{n+1}, p_n, \ldots p_1)$, where
	\begin{align}\label{eq.zy2}
		1 \leq y_j < p_j \text{ for all } n+1 \leq j \leq m.
 	\end{align}
	Since  $y \in  Y_{-}$, we see by  Lemma \ref{lem.mainlem} that
	$y_{j+1}/p_{j+1}\leq y_{j}/p_{j}$ for all $n+1 \leq j \leq m-1$.
	Suppose, for a contradiction, that $y_{j+1}/p_{j+1}= y_{j}/p_{j}$.
	Then, $y_{j+1}= y_{j}p_{j+1}/p_{j}$.
	We see by $p_0=p_1=1$ and \eqref{eq.pm} that $p_j$ and $p_{j+1}$ are relatively prime
	(see \cite[Lemma 4.5 (1)]{H}).
	Because $y_{j+1}$ is a positive integer, we obtain $y_j \geq p_j$,
	which contradicts  \eqref{eq.zy2}.
	Therefore, we obtain $y_{j+1}/p_{j+1}< y_{j}/p_{j}$ for all $n+1 \leq j \leq m-1$.
	Thus we see that $y$ is of the form \eqref{enu.z2}, and hence
	$y \in  Z(\lambda)_{-}$.
	Thus we have proved the proposition.
\end{proof}

Let  $\mu=\lambda -n_1\al_1-n_2\al_2$
with $n_1, n_2 \in \Z_{\geq 0}$, and assume that   $\mu\neq \lambda$.
By the results of Section \ref{sec.mainprf}
and the fact that $p_1=1$ (and hence there is no integer $q$ such that $0 < q <p_1$),
there exists a natural bijection from
$\CB(\lambda)_\mu$
onto
\begin{align}
	\bigset{(\ldots, 0, y_m, \ldots y_2, y_1)
	\in Z(\lambda)_{-}
	}{
	\sum_{j: \text{ odd}} y_j=n_1, \sum_{j: \text{ even}}y_j=n_2}.
\end{align}
Moreover,
we see that
\begin{align}
		Y_{-}(n_1,n_2;1)
		=& \bigset{(\ldots, 0, y_m, \ldots y_2, y_1)
		\in Y_{-}
		}{
		y_1=1,  \sum_{j: \text{ odd}} y_j=n_1, \sum_{j: \text{ even}}y_j=n_2}\\
		=& \bigset{(\ldots, 0, y_m, \ldots y_2, y_1)
		\in Y(\lambda)_{-}
		}{
		\sum_{j: \text{ odd}} y_j=n_1, \sum_{j: \text{ even}}y_j=n_2}.
\end{align}
By Proposition \ref{prop.zy}, we see that
 $\# \CB(\lambda)_{\mu}= \#Y_{-}(n_1,n_2;1)$.
Similarly, if $\mu=\lambda-n_1\al_1-n_2\al_2$ with $n_1, n_2 \in \Z_{\leq 0}$,
and $\mu \neq \lambda$, then $\#\CB(\lambda)_\mu=\#Y_+(n_1, n_2; -1)$.
Summarizing these, we obtain the following theorem.
\begin{theorem}
	For $\mu \in P$,
	it holds that
	\begin{align}
		\# \CB(\lambda)_{\mu}=
		\begin{cases}
			1 &  \text{\rm if } \mu=\lambda,\\
			 \#Y_{-}(n_1,n_2;1)
			 &  \text{\rm if }  \mu\neq\lambda \text{\rm{ and }} \mu=\lambda-n_1\al_1-n_2\al_2 \text{\rm{ for some }} n_1,n_2\geq 0,\\
			 \#Y_{+}(n_1,n_2;-1)
			 & \text{\rm if }  \mu\neq\lambda \text{\rm{ and }} \mu=\lambda-n_1\al_1-n_2\al_2 \text{\rm{ for some }} n_1,n_2 \leq 0,\\
			0
			 & \text{\rm otherwise}.
		\end{cases}
	\end{align}
\end{theorem}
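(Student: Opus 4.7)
The plan is to assemble the formula from the machinery developed in Sections~\ref{sec.mainprf} and~\ref{sec.comp}, using $\mor$ to transport the counting problem on $\CB(\lambda)$ to a counting problem on certain sequences of nonnegative (resp.\ nonpositive) integers.

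First, since $\lambda=\Lambda_1-\Lambda_2$ has $k_1=k_2=1$ (which are relatively prime), Remark~\ref{rem.primeb1} gives $\B(\lambda)=\B_1(\lambda)$, and Corollary~\ref{cor.main} yields an isomorphism $\B(\lambda)\xrightarrow{\sim}\CB(\lambda)$ of crystals sending $\pi_\lambda\mapsto u_\lambda$. Composing with the embedding $\mor$ of Theorem~\ref{thm.mor} identifies $\CB(\lambda)$ (weight-preservingly) with a subset of $\img(\Psi^+_{\iota^+})\otimes \CT_\lambda\otimes\img(\Psi^-_{\iota^-})$. Thus counting $\#\CB(\lambda)_\mu$ becomes the combinatorial problem of counting the tuples $\mor(\pi)$ arising from LS paths $\pi\in\B_1(\lambda)$ of weight $\mu$.

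Next, I invoke the decomposition of Remark~\ref{rem.cbdecomp}, $\CB(\lambda)=\CB(\lambda)_-\sqcup\{u_\lambda\}\sqcup\CB(\lambda)_+$, together with Proposition~\ref{prop.cbconpro}, which shows that every $b\in\CB(\lambda)_-$ has the form $b_1\otimes t_\lambda\otimes u_{-\infty}$ and every $b\in\CB(\lambda)_+$ has the form $u_\infty\otimes t_\lambda\otimes b_2$. Consulting the explicit form of $\mor(\pi)$ in Remark~\ref{rem.mor}, one sees that $\mor(\pi)$ lands in $\Z^{+\infty}_{\iota^+}\otimes t_\lambda\otimes (0,0,\ldots)$ precisely when the index range satisfies $0\le n\le m$ (forms (i)--(iv) there), and symmetrically for the antidominant side. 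Matching these cases to the definition of $Z(\lambda)_-$ (forms (a), (b)) and $Z(\lambda)_+$ (forms (c), (d)), I conclude that $\mor$ restricts to bijections $\CB(\lambda)_-\setminus\{u_\lambda\}\xrightarrow{\sim} Z(\lambda)_-$ and $\CB(\lambda)_+\setminus\{u_\lambda\}\xrightarrow{\sim} Z(\lambda)_+$ (the case $n=m=0$ corresponding to $u_\lambda$ itself).

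Now I apply Proposition~\ref{prop.zy} to identify $Z(\lambda)_\pm=Y(\lambda)_\pm$, and use Proposition~\ref{prop.wt} to compute weights. For $y=(\ldots,0,y_m,\ldots,y_2,y_1)\in Y(\lambda)_-$, we have $\wt(\mor^{-1}(y))=\lambda-\sum_{j\ge1}y_j\alpha_{i_j}=\lambda-\bigl(\sum_{j\text{ odd}}y_j\bigr)\alpha_1-\bigl(\sum_{j\text{ even}}y_j\bigr)\alpha_2$, so elements of weight $\mu=\lambda-n_1\alpha_1-n_2\alpha_2$ with $n_1,n_2\ge0$ correspond (since $y_1=1$) to the set $Y_-(n_1,n_2;1)$. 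The analogous computation for $Y(\lambda)_+$, where now $y_0=-1$ forces $y=\mor(\pi)$ with $n<m=0$ so the weight takes the form $\lambda-n_1\alpha_1-n_2\alpha_2$ with $n_1,n_2\le0$, matches it to $Y_+(n_1,n_2;-1)$. The case $\mu=\lambda$ contributes the single element $u_\lambda$; any $\mu$ not of these three forms (e.g.\ mixed signs of $n_1,n_2$) receives no contribution, yielding the formula.

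The only nontrivial verification in this chain is the bookkeeping of the case analysis in Remark~\ref{rem.mor}, confirming that the six non-trivial subcases (ii)--(viii) split cleanly into $Z(\lambda)_-$ and $Z(\lambda)_+$ with no overlap and that the ``otherwise'' situation (elements whose weight $\mu$ has $n_1$ and $n_2$ of opposite signs) is indeed impossible; this follows because each $y_j$ in $Y(\lambda)_-$ is nonnegative and each $y_j$ in $Y(\lambda)_+$ is nonpositive, so the contributions to the $\alpha_1$- and $\alpha_2$-coefficients have the same sign. Everything else is a routine consolidation of earlier results.
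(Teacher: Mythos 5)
Your overall route is the same as the paper's: identify $\CB(\lambda)$ with $\B(\lambda)=\B_1(\lambda)$ via Corollary~\ref{cor.main} and Remark~\ref{rem.primeb1}, transport by $\mor$, match the image with $Z(\lambda)_{\pm}$, and then use Proposition~\ref{prop.zy} and Proposition~\ref{prop.wt} to convert the weight count into $\#Y_{-}(n_1,n_2;1)$ resp.\ $\#Y_{+}(n_1,n_2;-1)$. However, the step you yourself single out as ``the only nontrivial verification'' is exactly where your argument has a gap. It is not true that the subcases (ii)--(viii) of Remark~\ref{rem.mor} ``split cleanly'' into $Z(\lambda)_-$ and $Z(\lambda)_+$: the mixed and boundary cases (iv) ($n=0<m$), (v) ($n<0<m$) and (vi) ($n<m=0$) are \emph{not} of any of the forms \eqref{enu.z1}--\eqref{enu.z6}, since their first (resp.\ last nonzero) entries are $q_1$ (resp.\ $q_0-p_0$) rather than $p_1$ (resp.\ $-p_0$). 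What makes the identification work for $\lambda=\Lambda_1-\Lambda_2$ is that these three cases simply cannot occur: here $p_0=p_1=1$, so in the presentation \eqref{eq.pib1} there is no integer $q_j$ with $0<q_j<p_j$ for $j\in\{0,1\}$, forcing the index range $n+1\le j\le m$ to lie entirely in $j\ge 2$, entirely in $j\le -1$, or to be empty. This is precisely the point the paper flags with ``the fact that $p_1=1$ (and hence there is no integer $q$ such that $0<q<p_1$)''; your justification via the signs of the $y_j$ only shows that the weights occurring in $Y(\lambda)_-$ and $Y(\lambda)_+$ lie in disjoint regions (away from $\lambda$), which is a different issue and does not exclude cases (iv)--(vi).

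A symptom of the same confusion is your assertion that for the antidominant side ``$y_0=-1$ forces $y=\mor(\pi)$ with $n<m=0$'': the elements of $Z(\lambda)_+$ come from $n\le m\le -1$ (cases (vii) and (viii)), while $n<m=0$ is case (vi), which is vacuous for this $\lambda$ for the reason above. Once you insert the observation $p_0=p_1=1$ (equivalently $k_1=k_2=1$) to kill cases (iv)--(vi), and note that all elements of $Z(\lambda)_-$ automatically have $y_1=p_1=1$ and all elements of $Z(\lambda)_+$ have $y_0=-p_0=-1$ (so the constraints in $Y_{-}(n_1,n_2;1)$ and $Y_{+}(n_1,n_2;-1)$ are accounted for), the rest of your argument goes through and coincides with the paper's proof.
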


\section*{Acknowledgment.}
The author would like to thank Daisuke Sagaki, who is his supervisor, for his kind support and advice.

\end{document}